\theoremstyle{definition}
\newtheorem{definition}{Definition}[section]
\newtheorem*{implementation}{Implementation}
\newtheorem*{notation}{Notation}
\newtheorem{theorem}{Theorem}[section]
\newtheorem{lemma}{Lemma}[section]  
\newtheorem{proposition}{Proposition}[section]
\newtheorem{example}{Example}[section]
\newtheorem{corollary}{Corollary}[section]
\theoremstyle{remark}
\newtheorem*{remark}{Remark}
\newtheorem*{remarks}{Remarks}
\newenvironment{myproof1}[1][\proofname]{%
  \proof[\emph{\textbf{Proof of Theorem 1.1} (Arithmetic-Geometric Sums)}]%
}{\endproof}
\newenvironment{myproof2}[1][\proofname]{%
  \proof[\emph{\textbf{Proof of Theorem 1.2} (Shifted Powers of $2$)}]%
}{\endproof}
\newenvironment{myproof3}[1][\proofname]{%
  \proof[\emph{\textbf{Proof of Theorem 1.3} (Extended Triangular Numbers)}]%
}{\endproof}
\newenvironment{myproof4}[1][\proofname]{%
  \proof[\emph{\textbf{Proof of Theorem 1.4} (Divisor Functions)}]%
}{\endproof}
\newenvironment{myproof5}[1][\proofname]{%
  \proof[\emph{\textbf{Proof of Theorem 1.5} (Almost Divisible Numbers)}]%
}{\endproof}
\DeclareMathOperator{\lcm}{lcm}
\newcommand{\q}{\text{ }}
\newcommand{\N}{\mathbb{N}}
\newcommand{\Ap}{\text{Ap}}
\newcommand{\Gen}{\text{Gen}}
\newcommand{\Bal}{\text{Bal}}
\newcommand{\Rem}{\text{Rem}}
\newcommand{\bx}{\boldsymbol{x}}
\newcommand{\ba}{\boldsymbol{a}}
\newcommand{\bu}{\boldsymbol{u}}
\newcommand{\bigO}{\mathcal{O}}
\newcommand{\lf}{\left\lfloor}
\newcommand{\rf}{\right\rfloor}
\newcommand{\lc}{\left\lceil}
\newcommand{\rc}{\right\rceil}
\title{A Graphical Approach to Finding the Frobenius Number, Genus and Hilbert Series of a Numerical Semigroup}
\author{Alexandru Pascadi}
\begin{document}
\maketitle

\begin{abstract}
This paper proposes a new, visual method to study numerical semigroups and the Frobenius problem. The method is based on building a so-called reduction graph, whose nodes usually correspond to monogenic semigroups, and whose edges can have multiple inputs and outputs. If such a construction is possible, then determining whether the studied semigroup is symmetric, or finding explicit forms of its Ap\'ery set and Hilbert series, is reduced to straightforward computations assisted by a MAPLE program we made available on arXiv. This approach applies to many of the cases considered in literature, including semigroups generated by arithmetic and geometric sequences, compound sequences, progressions of the form $a^n, a^n + a, \ldots, a^n + a^{n-1}$, triangular and tetrahedral numbers, certain Fibonacci triplets, etc.

After explaining the general approach in more detail, the paper studies the types of edges that can be used as building blocks of a reduction graph, as well as a series of operations that serve to modify or combine valid reduction graphs. In the end of the paper, we use these techniques to solve the Frobenius problem for $7$ new classes of numerical semigroups.
\end{abstract}

\section{Numerical Semigroups and the Frobenius Problem}

The main goal of this paper is to characterize a wide variety of numerical semigroups \cite{Numerical}, using a graphical representation for their structures. With this approach, finding the Frobenius number, genus, Ap\'ery sets, Hilbert series and other attributes of a numerical semigroup (all to be reviewed shortly) will be reduced to constructing a valid (hyper)graph with certain properties. The nodes of these graphs will be sets of nonnegative integers, while the edges will represent equalities between sums of sets, so let us start by presenting some related common notations:

\begin{notation}[Sums and Direct Sums of Sets]
If $A$ and $B$ are sets (for our purposes, of nonnegative integers) equipped with an additive operation, we denote by $A + B$ the following set:
\begin{equation}
\begin{split}
A + B &:= \{a + b : a \in A, b \in B\}
\end{split}
\end{equation}

Conversely, given a set $C$ equipped with the same additive operation, such that for all $c \in A + B$ there exist unique $a \in A$ and $b \in B$ with $c = a + b$, we write:
\begin{equation}
C = A + B =: A \oplus B
\end{equation}

In this case, we say that $C$ is an (internal) direct sum of $A$ and $B$, borrowing terminology from abstract algebra. We note that usual sum and direct sum do not generally associate, in the sense that we can only rewrite $A + (B \oplus C)$ as $(A + B) \oplus C$ provided that the latter direct sum is defined.
\end{notation}

\begin{remark}
Infinite sums of sets can also be defined when all the sets involved contain $0$, by considering all the possible finite sums: $\sum_{i \in I} A_i := \{a_1 + \ldots + a_n : a_k \in A_{i_k}, \{i_1, \ldots, i_n\} \subseteq I, n \geq 1\}$.
\end{remark}

\begin{definition}[Numerical Semigroups]
A semigroup $S$ of nonnegative integers under addition is called \emph{numerical} iff $\N \setminus S$ is finite (where $\N$ is the set of all nonnegative integers). An equivalent condition \cite{Numerical} is that $\gcd(S) = 1$, or that there exists $A \subseteq \N \setminus \{0\}$ with $\gcd(A) = 1$ that \emph{generates} $S$:
\begin{equation}
S = \langle A \rangle := \sum_{a \in A} \langle a \rangle = \sum_{a \in A} \{ an : n \in \N \}
\end{equation}

The semigroups $\langle a \rangle$ generated by one element are called \emph{monogenic}. It is known \cite{Numerical} that any numerical semigroup is finitely generated, and in fact has a unique and finite \emph{minimal system of generators} (i.e., a set $A$ that generates $S$ such that no proper subset of $A$ generates $S$). We are interested in studying the following characteristics of numerical semigroups:

\begin{enumerate}[leftmargin = 5mm]
\item The \emph{Ap\'ery set} \cite{Apery} of a numerical semigroup $S$ in terms of some $a \in S \setminus \{0\}$ is defined by:
\begin{equation}
\Ap(S, a) := \{s \in S : s - a \not\in S \}
\end{equation}

An essential property (and equivalent definition) of the Ap\'ery set is that:
\begin{equation}
S = \langle a \rangle \oplus \Ap(S, a)
\end{equation}

Indeed, the '$\supseteq$' inclusion holds because $\Ap(S, a) \subseteq S$ and $a \in S$, while the '$\subseteq$' inclusion follows directly from $(4)$. This sum is a direct sum because $\Ap(S, a)$ contains exactly one integer from each residue class modulo $a$ (in light of $(4)$, since $S$ contains all sufficiently large integers). In fact, $\Ap(S, a)$ contains precisely the elements: $\min\left(\{an + r : n \geq 0\} \cap S\right)$, for $0 \leq r < a$.

\item The \emph{set of gaps} in $S$ is simply the complement of $S$ in the nonnegative integers:
\begin{equation}
G(S) := \N \setminus S = \{x - an : x \in \Ap(S, a), n \geq 1\} \cap \N
\end{equation}

The second equality above holds for any $a \in S \setminus \{0\}$, because any nonnegative integer that is not in $S$ must be strictly smaller than the element of $\Ap(S, a)$ with the same residue modulo $a$ (otherwise, it could be written as $x + an$ for some $x \in \Ap(S, a)$, so it would belong to $S$ by $(5)$). Conversely, any element from the RHS of $(6)$ cannot belong to $S$, as that would contradict $(5)$.

\item The \emph{Frobenius number} of $S$ is the maximal integer that does not belong to $S$, denoted by $F(S)$. If $\gcd(a_1, \ldots, a_n) = 1$, we write $F(a_1, \ldots, a_n) := F(\langle a_1, \ldots, a_n \rangle)$. By $(6)$, when $S \neq \N$, one has:
\begin{equation}
F(S) = \max \left(G(S)\right) = \max \left(\Ap(S, a)\right) - a
\end{equation}

When $S = \N$, one has $F(S) = -1 = \max(\{0, \ldots, a-1\}) - a$, so the same formula applies. We note that $S$ is called $\emph{irreducible}$ if it is not an intersection of larger numerical semigroups, $\emph{symmetric}$ if it is irreducible and $F(S)$ is odd, and \emph{pseudo-symmetric} if it is irreducible and $F(S)$ is even.

\item The \emph{genus} of $S$ is the cardinality of its complement (which is finite), denoted:
\begin{equation}
g(S) := |G(S)|
\end{equation}

If $\gcd(a_1, \ldots, a_n) = 1$, we write $g(a_1, \ldots, a_n) := g(\langle a_1, \ldots, a_n \rangle)$. It is known \cite{Numerical} that $S$ is symmetric iff $g(S) = \frac{F(S)+1}{2}$, and pseudo-symmetric iff $g(S) = \frac{F(S)+2}{2}$.

\begin{remark}
Some authors \cite{Compound, Tripathi} use the notation $g(S)$ for the Frobenius number, and $n(S)$ for the genus of a numerical semigroup, while others \cite{Mersenne, Four, Triangular} prefer the above choice of notation.
\end{remark}

\item Given $k \geq 0$, the $k^{th}$ \emph{gaps' power sum} (non-standard terminology) of $S$ is:
\begin{equation}
S_k(S) := \sum_{n \in G(S)} n^k
\end{equation}

\item The \emph{Hilbert series} of a numerical semigroup $S$ is the (ordinary) generating function of $S$ (i.e., of the characteristic function corresponding to $S$), written as:
\begin{equation}
H_S(X) := \Gen(S; X) = \sum_{s \in S} X^s
\end{equation}

In particular, the generating polynomial of the set of gaps in $S$ can be found as:
\begin{equation}
\begin{split}
\Gen(G(S); X) &= \left(1 + X + X^2 + X^3 + \ldots\right) - H_S(X) \\
&= \frac{1}{1-X} - \frac{1}{1-X^a} \Gen(\Ap(S, a); X)
\end{split}
\end{equation}

The last equality is due to relation $(5)$ and the following simple lemma:
\end{enumerate}
\end{definition}

\begin{lemma}
If $A$, $B$ and $C$ are sets of nonnegative integers such that $A \oplus B = C$, then:
\begin{equation}
\Gen(A; X) \cdot \Gen(B; X) = \Gen(C; X)
\end{equation}
\end{lemma}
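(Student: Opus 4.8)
The plan is to expand both sides of $(13)$ as formal power series in $X$ and compare coefficients of each power $X^n$. On the left-hand side, the product $\Gen(A;X)\cdot\Gen(B;X) = \left(\sum_{a\in A}X^a\right)\left(\sum_{b\in B}X^b\right)$ can be regrouped, collecting all pairs $(a,b)$ with $a+b = n$; the coefficient of $X^n$ is therefore the number of pairs $(a,b)\in A\times B$ with $a+b = n$. On the right-hand side, the coefficient of $X^n$ in $\Gen(C;X) = \sum_{c\in C}X^c$ is simply $1$ if $n\in C$ and $0$ otherwise.

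The key step is then to invoke the hypothesis $A\oplus B = C$. By the definition of direct sum recalled in the Notation paragraph, $C = A+B$ means that $n\in C$ precisely when $n = a+b$ for some $a\in A$, $b\in B$; and the direct-sum condition says that whenever such a representation exists, the pair $(a,b)$ is unique. Hence the number of representing pairs $(a,b)$ with $a+b=n$ is exactly $1$ when $n\in C$ and exactly $0$ when $n\notin C$. This matches the coefficient of $X^n$ on the right-hand side for every $n$, so the two power series are equal coefficientwise, which gives $(12)$.

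One subtlety worth addressing explicitly is the rearrangement of the (possibly infinite) double sum on the left-hand side: for each fixed $n$, only finitely many pairs $(a,b)$ of nonnegative integers satisfy $a+b=n$, so the coefficient of $X^n$ in the product is a finite sum and the Cauchy-product manipulation is valid at the level of formal power series regardless of whether $A$, $B$, $C$ are finite or infinite. I expect the main (minor) obstacle to be purely expository: making sure the reader sees that the uniqueness clause in the definition of $\oplus$ is exactly what upgrades "the coefficient is a nonnegative integer counting representations" to "the coefficient is $0$ or $1$," which is what forces the product to equal the plain generating function $\Gen(C;X)$ rather than some weighted version.
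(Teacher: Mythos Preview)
Your proof is correct and follows essentially the same approach as the paper: the paper writes the product $\sum_{a\in A}X^a \sum_{b\in B}X^b = \sum_{a\in A,\,b\in B}X^{a+b} = \sum_{c\in C}X^c$ directly, using the unique-representation property of $\oplus$, which is exactly your coefficient-comparison argument stated more tersely. Your added remark on the Cauchy-product rearrangement is a nice bit of rigor the paper leaves implicit.
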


\begin{proof}
Since every element of $C$ can be uniquely written as $a + b$ with $a \in A$ and $b \in B$, we get that:
\begin{equation}
\sum_{a \in A} X^{a}\sum_{b \in B} X^{b} = \sum_{\substack{a \in A \\ b \in B}} X^{a+b} = \sum_{c \in C} X^c
\end{equation}
\end{proof}

\begin{remark}
Among the attributes of numerical semigroups previously listed, the most popular one is probably the Frobenius number, the computation of which is known as the \emph{Frobenius Coin Problem} \cite{Frobenius}. The "coin" terminology comes from the following interpretation of the problem: given a sequence $a_1, a_2, \ldots, a_n$ of coin denominations with greatest common divisor $1$, we wish to find the greatest (integer) amount of change that cannot be created from these denominations (assuming we have infinitely many coins of each denomination available). Of course, the coin denominations are precisely the generators of the numerical semigroup to be studied.
\end{remark}

This problem was originally solved by Sylvester \cite{Sylvester, Sylvester2} when $n = 2$. There is considerable progress \cite{Three} on the case $n = 3$, as well as many asymptotical results \cite{Asym, Asym2}, but the general question is known to be NP-hard \cite{NP}. In consequence, most of the literature on the Coin problem focuses on finding the Frobenius numbers of semigroups generated by special sequences of integers: arithmetic \cite{Arithm}, modified arithmetic \cite{Modified}, geometric \cite{Geom} and compound \cite{Compound} sequences; Fibonacci \cite{Fibo}, Mersenne \cite{Mersenne}, repunit \cite{Repunit}, Thabit \cite{Thabit, Four}, Cunningham \cite{Four}, triangular and tetrahedral \cite{Triangular} numbers; triplets with certain divisibility constraints \cite{Pakornrat}, some shifted powers \cite{Tripathi}, etc. The methods of the current paper are applicable to most of the aforementioned sequences, as well as to some new ones:

\begin{theorem}[Arithmetic-Geometric Sums]
Let $a, b, d, n$ be positive integers such that $d \divides b$ and $\gcd(a, b) = 1$. First, define $S := \left\langle a^n, a^n + a^{n-1}d, a^n + 2a^{n-1}d, \ldots, a^n + a^{n-1}b \right\rangle$, a semigroup whose generators lie in an arithmetic sequence. Then, if $b \leq a$, continue this sequence of generators by defining $S_1 := \left\langle S \cup \{ a^n + a^{n-2}b^2, \ldots, a^n + ab^{n-1},  a^n + b^n\} \right\rangle$. Similarly, making abstraction of the condition $b \leq a$, let $S_2 := \left\langle S \cup \{ a^n + a^{n-1}b + a^{n-2}b^2, \ldots, a^n + a^{n-1}b + \ldots + b^n \} \right\rangle$. We claim that:
\begin{align*}
F(S_1) &= a^{n-1}\left(a\lc \frac{(a-1)d}{b} \rc + ad - a - d\right) + (a-1)\frac{(n-1)a^n(a-b) + b^2\left(a^{n-1}-b^{n-1}\right)}{a-b}
\\
F(S_2) &= a^{n-1}\left(a\lc \frac{(a-1)d}{b} \rc + ad - a - d\right) + (a-1)\frac{(n-1)a^{n+1}(a-b) - b^3\left(a^{n-1} - b^{n-1}\right)}{(a-b)^2}
\end{align*}

In both cases, setting $n = 1$ yields the case of arithmetic sequences \cite{Arithm}. When $d = b$, the generators of $S_1$ lie in a geometric sequence shifted by $a^n$, while the generators of $S_2$ are precisely the partial sums of a geometric sequence: $\left\langle a^n, a^n + a^{n-1}b, \ldots, a^n + a^{n-1}b + \ldots + b^n\right\rangle$. When $d = b = 1$, $S_1$ coincides with a semigroup initially studied by A. Tripathi \cite{Tripathi}: $\left\langle a^n, a^n + a, \ldots, a^n + a^{n-1} \right\rangle$.
\end{theorem}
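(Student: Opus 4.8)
The plan is to build reduction graphs for $S_1$ and $S_2$ that express each semigroup as an iterated direct sum of monogenic pieces plus a ``remainder'' governed by the arithmetic‐sequence base case $S$, and then read off the Frobenius number from the top element of the resulting Apéry set via $(7)$. The natural Apéry coordinate is $a^n$, the smallest generator, so the first step is to understand $\Ap(S, a^n)$: since all other generators of $S$ are congruent to $a^n \equiv 0$ only through multiples, one shows that $S$ modulo $a^n$ is generated by the residues $a^{n-1}d, 2a^{n-1}d, \dots, a^{n-1}b$, which are $a^{n-1}$ times an arithmetic progression; this reduces the base case to the known arithmetic‐sequence Apéry set \cite{Arithm}, scaled by $a^{n-1}$ and lifted appropriately. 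This is where the factor $a\lc\frac{(a-1)d}{b}\rc + ad - a - d$ comes from, matching the $n=1$ specialization.

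Next I would handle the extra generators. For $S_1$, the new generators $a^n + a^{n-k}b^k$ for $k = 2, \dots, n$ should, after subtracting $a^n$ and dividing by the relevant power, be recognized as fitting a geometric pattern; the key structural claim is that $S_1 = \langle a^n\rangle \oplus \Ap(S_1, a^n)$ where $\Ap(S_1, a^n)$ decomposes as a direct sum of a chain of sets each contributing one ``layer'' of the form $\{0, c_k, 2c_k, \dots\}$ truncated suitably, with $c_k$ built from $a^{n-k}b^k$. Using Lemma 1.1 repeatedly, the Hilbert series — and hence, by $(11)$ and $(7)$, the Frobenius number — factors as a product over these layers. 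The analogous decomposition for $S_2$ uses the partial-sum generators $a^n + a^{n-1}b + \dots + b^k$; the extra $\frac{1}{a-b}$ versus $\frac{1}{(a-b)^2}$ in the two formulas reflects that summing a geometric-type layer once more introduces an additional $\sum b^j$ factor, i.e.\ an extra division by $(a-b)$.

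Once the reduction graph is in place, extracting $F(S_i)$ is the ``straightforward computation'' promised in the abstract: one identifies $\max(\Ap(S_i, a^n))$ as a sum, over the layers, of the top element of each layer, then subtracts $a^n$. Concretely, the base layer contributes $a^{n-1}\big(a\lc\frac{(a-1)d}{b}\rc + ad - a - d\big) + a^n$ (the arithmetic part), and each geometric layer $k$ contributes a term proportional to $(a-1)a^{n-k}b^k$ or its partial-sum analogue; collecting these via the finite geometric series $\sum_{k} a^{n-k}b^k = a^{n-1}\frac{a^{n} - b^{n}}{a - b}\cdot\frac{1}{a^{n-1}}$ and its iterate yields exactly the closed forms $(n-1)a^n(a-b) + b^2(a^{n-1}-b^{n-1})$ over $a-b$, respectively the squared‐denominator version for $S_2$. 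The $(n-1)$ term arises because each of the $n-1$ layers contributes a full copy of $a^n$'s worth of ``offset'' before the geometric correction.

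The main obstacle I anticipate is proving that the proposed decomposition of $\Ap(S_i, a^n)$ is genuinely a \emph{direct} sum — i.e.\ that the layers do not overlap and that every residue class mod $a^n$ is hit exactly once. This amounts to a careful counting/uniqueness argument: one must verify that the map sending a tuple of layer-indices to its sum is injective modulo $a^n$, which hinges on the condition $b \le a$ (for $S_1$) controlling carries in the base-$a$-like expansions, and on $d \divides b$, $\gcd(a,b) = 1$ ensuring the arithmetic base layer behaves correctly. Establishing that the reduction graph is \emph{valid} in the paper's sense — that all the required direct-sum conditions hold simultaneously — is the technical heart; once that is granted, Lemma 1.1 and formulas $(6)$–$(11)$ turn the rest into bookkeeping with finite geometric series.
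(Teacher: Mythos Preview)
Your strategy is essentially the paper's: build a total reduction graph rooted at $\langle a^n\rangle$, with one (modified) arithmetic edge handling the base $S$ and $n-1$ further edges handling the ``geometric'' extensions, then apply Corollary~3.1 and sum the maxima of the remainder sets. Two points where the paper is sharper than your sketch:

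\begin{enumerate}
\item Your ``layers of the form $\{0,c_k,2c_k,\dots\}$'' are exactly the remainder sets of \emph{linear-binary} reductions (Corollary~5.2). The identity you need, and which you do not name, is
\[
a\bigl(a^n + A_{i,k+1}\bigr) \;=\; M_i\cdot a^n \;+\; N_i\cdot\bigl(a^n + A_{i,k}\bigr),
\]
with $(M_1,N_1)=(a-b,\,b)$ for $S_1$ (this is where $b\le a$ enters) and $(M_2,N_2)=(a,\,b)$ for $S_2$. Together with $\gcd(a^n,A_{i,k})=a^{n-k}$, this gives weight $a$ for each of the $n-1$ linear-binary edges; the arithmetic edge also has weight $a$.
\item The ``careful counting/uniqueness argument'' you anticipate for the direct sum is unnecessary. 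Since the product of edge weights is $a\cdot a^{\,n-1}=a^n=r(G)$, the graph is total, and Theorem~3.1 hands you $\Ap(S_i,a^n)=\bigoplus_e \Rem(e)$ automatically; no carries or base-$a$ expansions need to be checked by hand.
\end{enumerate}

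With those two ingredients in place, your final paragraph is correct: $F(S_i)=-a^n+\max(\Rem(e_0))+(a-1)\sum_{k=1}^{n-1}(a^n+A_{i,k+1})$, and summing the resulting geometric series produces the stated closed forms.
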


\begin{theorem}[Shifted Powers of $2$]
Let $n$ be a positive integer and $0 \leq k \leq \nu_2(n)+1$, where $\nu_2(n)$ is the maximal exponent of a power of $2$ dividing $n$. Then $S := \left\langle n, n+2^0, n+2^1, \ldots, n+2^k \right\rangle$ is a numerical semigroup with Frobenius number:
\[
F(S) = \begin{cases}
\frac{n^2}{2^k} + (k - 1)n - 1, \quad &\text{if $k \leq \nu_2(n)$;} \\[3pt]
\frac{n^2}{2^k} + \left(k - \frac{3}{2}\right)n - 1, \quad &\text{if $k = \nu_2(n)+1$.} \\
\end{cases}
\]
\end{theorem}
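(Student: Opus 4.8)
The plan is to pin down the Apéry set $\Ap(S,n)$ completely and then read off $F(S)$ from $(7)$. To set up: $S$ is numerical because $n+2^0 = n+1$ is among its generators and $\gcd(n,n+1)=1$. Now fix $a=n$. An integer lies in $S$ and is congruent to $r$ modulo $n$ exactly when it can be written as $tn+v$ where $v=\sum_{j=0}^{k}c_j 2^j$ for nonnegative integers $c_j$, with $t\ge\sum_j c_j$ and $v\equiv r\pmod n$ — the term $tn$ absorbing both the extra copies of $n$ and the ``$n$-parts'' of the generators $n+2^j$. Consequently the representative of $\Ap(S,n)$ in class $r$ equals $\min\{w(v)\,n+v : v\equiv r \ (\mathrm{mod}\ n)\}$, where $w(v)$ is the least number of summands in a representation of $v$ as a sum of elements of $\{2^0,2^1,\ldots,2^k\}$.

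The key elementary fact is that $w(v)=\lfloor v/2^k\rfloor+s_2(v\bmod 2^k)$, where $s_2$ denotes the base-$2$ digit sum: greedily using as many copies of $2^k$ as possible is optimal, since any part $2^j$ with $j<k$ can replace a $2^k$ only by at least doubling the count. Writing $v=2^k u+s$ with $0\le s<2^k$ and abbreviating $N:=n+2^k$, the quantity to minimize becomes $uN+s_2(s)\,n+s$, subject to $u\ge 0$ and $2^k u+s\equiv r\pmod n$. I then split according to $\gcd(2^k,n)$, i.e.\ according to the two cases of the statement.

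If $k\le\nu_2(n)$, then $2^k\mid n$, the congruence forces $s=r\bmod 2^k$ and then $u=\lfloor r/2^k\rfloor$, and one obtains the clean description $\Ap(S,n)=\{qN : 0\le q<n/2^k\}\oplus\bigoplus_{j=0}^{k-1}\{0,\,n+2^j\}$; this is a genuine direct sum since residues modulo $2^k$, then modulo $n$, separate the factors, and Lemma 1.1 even yields the Hilbert series. Its maximum, attained at $r=n-1$, equals $(n/2^k-1)N+\sum_{j<k}(n+2^j)=n^2/2^k+kn-1$, so $(7)$ gives $F(S)=n^2/2^k+(k-1)n-1$. When $k=\nu_2(n)+1$, write $n=2^{k-1}m$ with $m$ odd; now the congruence admits exactly the two residues $s\in\{s_0,\,s_0+2^{k-1}\}$ with $s_0:=r\bmod 2^{k-1}$, and for each the minimal $u$ is recovered by inverting $2$ modulo $m$. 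A short computation then shows the representative of $\Ap(S,n)$ in class $r$ is the smaller of two linear expressions in this minimal $u$, in $s_2(s_0)$, and in $s_0$, and that the first expression is the smaller precisely when $\lfloor r/2^{k-1}\rfloor$ is even, the second precisely when it is odd.

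The main obstacle is this last case. One must maximize the ``even'' and ``odd'' branches separately; in each branch the coefficient of $N$ (coming from the minimal $u$) and the contribution of $s_0$ decouple, so the maxima occur at $r=n-1$ for the even branch and at $r=n-2^{k-1}-1$ for the odd branch, with values $\tfrac{m-1}{2}N+(k-1)n+2^{k-1}-1$ and $\tfrac{m-3}{2}N+kn+2^k-1$ respectively. The first exceeds the second by exactly $2^{k-1}$, so it is $\max\Ap(S,n)$. Simplifying $\tfrac{m-1}{2}N+(k-1)n+2^{k-1}-1=n^2/2^k+(k-\tfrac12)n-1$ via $m=n/2^{k-1}$ and $N=n+2^k$, and subtracting $n$ through $(7)$, gives $F(S)=n^2/2^k+(k-\tfrac32)n-1$. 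A final check should confirm that the degenerate sub-cases — $m=1$ (so $n$ is a power of $2$ and the odd branch is empty) and small $k$ — remain consistent with these formulas.
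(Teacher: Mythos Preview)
Your argument is correct and self-contained, but it proceeds quite differently from the paper. The paper builds a total reduction graph for $S$: for $0\le i\le k-1$ a linear-binary edge (justified by $2(n+2^i)=n+(n+2^{i+1})$) connects $\langle n+2^i\rangle$ to $\langle n\rangle$ and $\langle n+2^{i+1}\rangle$ with weight $2$, and a final scaled binary edge of weight $n/2^k$ connects $\langle n+2^k\rangle$ to $\langle n\rangle$; for $k=\nu_2(n)+1$ this last edge is instead an enrichment to a modified arithmetic edge with the two inputs $\langle n+2^{k-1}\rangle$, $\langle n+2^k\rangle$. The Frobenius number then drops out of Corollary~3.1 together with the remainder-set maxima from $(15)$ and $(61)$, with essentially no further computation. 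Your approach, by contrast, computes $\Ap(S,n)$ directly via the minimum-parts function $w(v)=\lfloor v/2^k\rfloor+s_2(v\bmod 2^k)$ and a bare-hands minimization; this is more elementary and uses none of the graph machinery. It is worth noting that your direct-sum description in the case $k\le\nu_2(n)$, namely $\Ap(S,n)=\{q(n+2^k):0\le q<n/2^k\}\oplus\bigoplus_{j<k}\{0,\,n+2^j\}$, is \emph{exactly} the decomposition the paper's reduction graph produces (each factor is the remainder set of one edge), so there you have recovered the graph's output from first principles. In the boundary case your parity-of-$\lfloor r/2^{k-1}\rfloor$ split is a hands-on substitute for the modified arithmetic reduction; the paper's route is shorter because the ceiling formula in $(61)$ absorbs both parities at once, whereas your route gives a more explicit picture of the Ap\'ery set.
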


\begin{theorem}[Extended Triangular Numbers]
Let $S := \left\langle \left\{ \frac{(n+i)(n+(i\% 2)+1)}{2} : 0 \leq i \leq k \right\}\right\rangle$ for some $n \geq 1$ and $k \geq 3$, where $i\% 2 \in \{0, 1\}$ denotes the residue of $i$ modulo $2$ (borrowing notation from computer science). Then, $S$ is a numerical semigroup with the following Frobenius number:
\[
F(S) = \begin{cases}
\lc \frac{n-2}{2\lf k/2 \rf} \rc T_n + \lc \frac{n}{\lf (k-1)/2 \rf} \rc T_{n+1} + n^2 + n - 1, \quad &\text{if $n$ is even;} \\[7pt]
\lc \frac{n-1}{\lf k/2 \rf} \rc T_n + \lc \frac{n-1}{2\lf (k-1)/2 \rf} \rc T_{n+1} + n^2 - 2, \quad &\text{if $n$ is odd.} \\
\end{cases}
\]

Above, we denoted $T_n := \frac{n(n+1)}{2}$. We note that for $k = 3$, the semigroup $S = \langle T_n, T_{n+1}, T_{n+1}, T_{n+2} \rangle$ $=\langle T_n, T_{n+1}, T_{n+2} \rangle$ is generated by $3$ consecutive triangular numbers, a case initially studied in \cite{Triangular}.
\end{theorem}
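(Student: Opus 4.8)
The plan is to apply the paper's reduction-graph machinery with Apéry reference element $a = T_n$, the smallest generator. The first step is bookkeeping: separating the indices into $i = 2j$ and $i = 2j+1$ shows that the generating set splits into two arithmetic progressions,
\[
A = \{\, T_n + j(n+1) : 0 \le j \le M \,\}, \qquad B = \{\, T_{n+1} + j(n+2) : 0 \le j \le L \,\},
\]
where $M := \lf k/2 \rf \ge 1$ and $L := \lf (k-1)/2 \rf \ge 1$ (here we use $k \ge 3$), which overlap in the single element $T_{n+1} = T_n + (n+1) \in A$. Since also $T_{n+2} = T_{n+1} + (n+2) \in B$, the three consecutive triangular numbers $T_n, T_{n+1}, T_{n+2}$ are all among the generators, so a short computation that splits on the parity of $n$ gives $\gcd(T_n, T_{n+1}, T_{n+2}) = 1$, and hence $S$ is a numerical semigroup.

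The second step builds the reduction graph. Any element of $S$ arises as a sum of some $q$ generators of $A$ whose offset indices total $t \in [0, qM]$ together with some $p$ generators of $B$ whose offset indices total $e \in [0, pL]$, and a direct computation gives that such a sum equals $(q+p)T_n + (t+p+e)(n+1) + e$; conversely every quadruple $(q,t,p,e)$ with $t \le qM$ and $e \le pL$ is realizable. This identity is precisely the information needed to exhibit $\Ap(S, T_n)$ as a finite internal direct sum of monogenic-type pieces — essentially $\langle T_n \rangle$-translates of the arithmetic-progression Apéry sets \cite{Arithm} of $A$ and of $B$, glued along the common generator $T_{n+1}$ — which is the reduction graph. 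By Lemma 1.1 this decomposition yields a product formula for $\Gen(\Ap(S, T_n); X)$, and in particular identifies $\Ap(S, T_n)$ as the set of minimal representatives: for each residue $r$ modulo $T_n$, the corresponding element of $\Ap(S, T_n)$ is the least value of $(q+p)T_n + (t+p+e)(n+1)+e$ over the realizable $(q,t,p,e)$ of residue $r$ (so in particular one takes $q = \lc t/M \rc$).

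The third step is this residue-by-residue minimization, and this is where the parity of $n$ enters, through the divisibility facts $\gcd(n+1, T_n) = n+1$ for $n$ even but $\gcd(n+1, T_n) = \tfrac{n+1}{2}$ for $n$ odd, together with $\gcd(n+2, T_n) = 1$ for $n$ odd. Thus for $n$ even the $A$-contributions (multiples of $n+1$) only reach $\tfrac n2$ of the $T_n$ residue classes and the $B$-generators must supply the rest, whereas for $n$ odd the $B$-contributions already sweep out all of $\mathbb{Z}/T_n\mathbb{Z}$ and the $A$-generators merely improve the resulting bound — which is exactly what interchanges the roles of $T_n, T_{n+1}$ and of $M, L$ in the two branches of the claimed formula. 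I expect the largest minimal representative to occur at the residue $r = T_n - 1$ when $n$ is even (and at the analogous extreme residue when $n$ is odd): this forces the offset total $e$ to be as large as possible modulo $n+1$ and drives $q+p$ up, and minimizing $\lc t/M \rc + p$ subject to that residue constraint produces the ceilings $\lc \tfrac{n-2}{2M} \rc$, $\lc \tfrac nL \rc$ (respectively $\lc \tfrac{n-1}{M} \rc$, $\lc \tfrac{n-1}{2L} \rc$). Finally $F(S) = \max \Ap(S, T_n) - T_n$ by $(7)$, and substituting $T_{n+1} = T_n + (n+1)$ and $2T_n = n^2 + n$ collapses everything to the stated closed form; setting $k = 3$ (so $M = L = 1$) recovers the formula for three consecutive triangular numbers from \cite{Triangular}.

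The main obstacle is the combination of the last two steps: confirming that the naive ``count the generators'' description really does assemble into a \emph{valid} reduction graph — that is, an honest internal direct-sum decomposition of $\Ap(S, T_n)$ — and then carrying out the residue-wise minimization in closed form. The interaction of the two progressions (different steps $n+1$ and $n+2$, meeting in the single generator $T_{n+1}$) makes the bookkeeping delicate; the parity split is forced by the divisibility facts above; and I would expect one or two further sub-cases according to the size of $k$ (equivalently of $M$, $L$) relative to $n$ — for instance whether $\lc n/L \rc$ exceeds $\tfrac n2$ — since this controls how many extra copies of $T_n$ the worst-case residue needs in order to be representable.
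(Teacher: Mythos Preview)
Your decomposition of the generating set into the two arithmetic progressions $A$ and $B$ is exactly the right first step, and the parity observations about $\gcd(n+1,T_n)$ are correct. However, from that point on you abandon the reduction-graph machinery and revert to a bare-hands residue-by-residue minimization of Ap\'ery representatives; the ``reduction graph'' you describe (``glued along $T_{n+1}$'') is not actually specified as a graph with identified edges, weights, and remainder sets, and you yourself flag the resulting direct-sum verification and minimization as unresolved obstacles, anticipating extra sub-cases depending on the size of $k$.

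The paper avoids all of this. The reduction graph for $S$ is a two-edge \emph{chain}, obtained by enriching the two scaled binary edges in the known graph for $\langle T_n, T_{n+1}, T_{n+2}\rangle$ (Figure~22) into scaled arithmetic edges (Figure~24): one edge has output $\langle T_n\rangle$ and inputs $\langle T_n + j(n+1)\rangle$ for $1\le j\le p=\lfloor k/2\rfloor$ (your set $A\setminus\{T_n\}$), and the other has output $\langle T_{n+1}\rangle$ and inputs $\langle T_{n+1}+j(n+2)\rangle$ for $1\le j\le q=\lfloor (k-1)/2\rfloor$ (your set $B\setminus\{T_{n+1}\}$). Since enrichment preserves weights and balance (Lemma~5.3), totality is inherited for free from the triangular-triplet graph, and the remainder sets are given \emph{explicitly} by the modified-arithmetic formula~(61). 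The Frobenius number then drops out of~(37) as $-T_n$ plus the two remainder-set maxima, with no minimization, no direct-sum verification, and no sub-cases beyond the parity of $n$. The missing idea in your plan is precisely this: recognize the graph as an enrichment of an already-known total graph, so that validity and the closed-form remainders come prepackaged from Corollary~4.4.
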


\begin{theorem}[Divisor Functions]
Let $n$, $t$ be positive integers such that for any distinct \emph{maximal} prime powers $p^k, q^l$ dividing $n$, one has $\gcd\left(\frac{p^{t(k+1)}-1}{p^t-1}, \frac{q^{t(l+1)}-1}{q^t-1} \right) = 1$. Considering $1$ a prime power, the semigroup $S := \left\langle \left\{ \sigma_t(m) : \frac{m}{n} \text{ is a prime power} \right\} \right\rangle$ is numerical and has the Frobenius number:
\[
F(S) = \sigma_t(n) \left(-1 + \sum_{1 < p^k \divides\divides n} \frac{p^{2t(k+1)} - p^t}{p^{t(k+1)} - 1}\right)
\]

Above, we used the notations: $\sigma_t(n) := \sum_{d \divides n} d^t$, and $p^k \divides\divides n \iff (p^k \divides n \text{, but } p^{k+1} \not \divides n)$.
\end{theorem}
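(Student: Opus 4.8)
\bigskip
\noindent\textbf{Proof plan.}

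The idea is to use the multiplicativity of $\sigma_t$ to turn this into a ``product'' of one–prime problems over the element $a:=\sigma_t(n)$. Assume $n>1$ (for $n=1$ one has $S=\N$ and $F(S)=-1$, which the formula also gives), write $n=\prod_{i=1}^r p_i^{k_i}$, and set $a_i:=\sigma_t(p_i^{k_i})$, so that $a=\prod_{i=1}^r a_i$ and the $a_i$ are pairwise coprime by hypothesis; put also $b_i:=a/a_i$ and $w^{(i)}_j:=\sigma_t(p_i^{k_i+j})$. First I would reduce the generating set: any generator $\sigma_t(m)$ with $m/n$ a prime power coprime to $n$ is a multiple of $a$ by multiplicativity, hence redundant, while $\sigma_t(n p_i^{\,j})=b_i\,w^{(i)}_j$. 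Thus $S=\langle a\rangle+\sum_{i=1}^r b_i\langle w^{(i)}_1,w^{(i)}_2,\dots\rangle$.

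Next I would prove the Apéry decomposition $\Ap(S,a)=\bigoplus_{i=1}^r b_i\,\Ap(L_i,a_i)$, where $L_i:=\langle a_i\rangle+\langle w^{(i)}_1,w^{(i)}_2,\dots\rangle=\langle\sigma_t(p_i^{k_i+m}):m\ge 0\rangle$ is a numerical semigroup (the identity $\sigma_t(p^{k+1})=p^t\sigma_t(p^k)+1$ makes its two smallest generators coprime). This is the step that uses the reduction‑graph machinery of the paper: modulo $a$ each generator $b_i w^{(i)}_j$ is supported on the $i$‑th CRT component only, so computing the least element of $S$ in a fixed residue class modulo $a$ decouples into $r$ independent minimizations; inside the $i$‑th component, adding multiples of $a_i$ does not change the residue modulo $a_i$, so that minimum equals $b_i$ times an element of $\Ap(L_i,a_i)$; and pairwise coprimality of the $a_i$ makes the sum direct, of the correct cardinality $a$. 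By Lemma 1.1 this simultaneously yields the Hilbert series and genus of $S$ from those of the $L_i$.

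It remains to analyze each local semigroup $L_i$. Writing $q:=p_i^t$ and $R_j:=(q^j-1)/(q-1)$, the recurrence above identifies $L_i=\langle R_{k_i+1},R_{k_i+2},R_{k_i+3},\dots\rangle$, a base‑$q$ ``repunit'' semigroup; and since $R_{k+1+l}=R_{k+1}+q^{k+1}R_l$, the generators $R_{2k_i+2},R_{2k_i+3},\dots$ already lie in $\langle R_{k_i+1},\dots,R_{2k_i+1}\rangle$, so $L_i=\langle\sigma_t(p_i^{k_i}),\dots,\sigma_t(p_i^{2k_i})\rangle$ is $(k_i+1)$‑generated. The hard part is to prove $F(L_i)=q^{\,k_i+1}R_{k_i+1}-1=p_i^{\,t(k_i+1)}a_i-1$, i.e. $\max\Ap(L_i,a_i)=R_{k_i+1}(q^{\,k_i+1}+1)-1$. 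Writing $R:=R_{k_i+1}$ and $k:=k_i$, one direction ($q^{k+1}R-1\notin L_i$) follows by a modular argument: reducing a hypothetical representation $q^{k+1}R-1=\sum_{l\ge 0}\beta_l R_{k+1+l}$ modulo $q^{k+1}$ and using $(q-1)R\equiv-1\pmod{q^{k+1}}$ with $\gcd(R,q)=1$ forces the total multiplicity $B:=\sum_l\beta_l$ to be $\equiv q-1\pmod{q^{k+1}}$; but $B=q-1$ would require $\sum_{l\ge1}\beta_l R_l=R-1$, impossible since this sum is at most $(q-1)R_k=q^k-1<R-1$, and $B\ge q^{k+1}+q-1$ makes $\sum_l\beta_l R_{k+1+l}\ge RB$ too large. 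For the opposite direction one has to show the largest Apéry element sits in the residue class of $R-1=qR_k$: the least element of $L_i$ in class $\rho$ is at most $q^{k+1}\rho+R\cdot m(\rho)$, where $m(\rho)$ is the least number of the generators $R_1,\dots,R_k$ summing to $\rho$, and $m(\rho)$ can be large only when $\rho$ lies correspondingly far below $R-1$, so that $q^{k+1}\rho+R\,m(\rho)$ is maximized at $\rho=R-1$ (where $m(R-1)=q$). This bookkeeping is delicate and is presumably carried out in the body of the paper by exhibiting an explicit reduction graph for $L_i$ — a path on $k_i$ monogenic nodes built from the edge types developed there — after which $\Ap(L_i,a_i)$, $F(L_i)$, the genus and the Hilbert series become direct computations. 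I expect this step, the exact description of $\Ap(L_i,a_i)$, to be the main obstacle.

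Finally I would assemble: $F(S)=\max\Ap(S,a)-a=\sum_i b_i\max\Ap(L_i,a_i)-a=\sum_i b_i\big(F(L_i)+a_i\big)-a=\sum_i b_i F(L_i)+(r-1)a$. Substituting $F(L_i)=p_i^{\,t(k_i+1)}a_i-1$ and using $a_i b_i=a$ together with $a_i=\sigma_t(p_i^{k_i})=(p_i^{t(k_i+1)}-1)/(p_i^t-1)$, a short calculation shows that the $i$‑th contribution $a\,p_i^{t(k_i+1)}-b_i$ together with one copy of $a$ equals $a\cdot(p_i^{2t(k_i+1)}-p_i^t)/(p_i^{t(k_i+1)}-1)$; the leftover $-a$ then produces exactly the stated closed form for $F(S)$.
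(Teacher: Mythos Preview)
Your approach is essentially the paper's: reduce the generating set by multiplicativity, decompose $\Ap(S,\sigma_t(n))$ as a direct sum over the prime-power pieces (your CRT argument is exactly what the paper's ``composition by the root'' of the repunit edges encodes), and then assemble the Frobenius number from the local data; your final algebraic check that $a\,p_i^{t(k_i+1)}-b_i+a = a\cdot\frac{p_i^{2t(k_i+1)}-p_i^t}{p_i^{t(k_i+1)}-1}$ is correct.

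The one point where you diverge is the treatment of the local repunit semigroups $L_i$. You expect the paper to prove $F(L_i)=q^{k_i+1}a_i-1$ via an explicit reduction graph, and you sketch your own modular argument for it. In fact the paper does not prove this at all: it simply imports the result $\max\Ap(L_i,a_i)=\frac{q^{2(k_i+1)}-1}{q-1}-1$ from the repunit paper~\cite{Repunit} as a black box. So your ``main obstacle'' is handled by citation, not by the graph machinery. Your own argument for $q^{k+1}R-1\notin L_i$ is correct (once one restricts to generators $R_{k+1},\dots,R_{2k+1}$, as you noted is possible), but the other direction---that this is the \emph{largest} gap---remains only a heuristic in your write-up; if you want a self-contained proof you would need to complete that maximization over residue classes, or else just cite~\cite{Repunit} as the paper does.
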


\begin{theorem}[Almost Divisible Numbers]
Given positive integers $m$ and $n$, we say that $m$ is \emph{almost divisible} by $n$, written $m::n$, if there exists a prime power $p^k$ such that $n \divides p^k m$ (that is, $m$ is divisible by $n$ up to a prime power factor). Then for a fixed positive integer $n$, the semigroups $S_\leq := \langle\left\{ m \in \N : m \leq n, m::n \right\}\rangle = \langle\left\{ m \in \N : m::n \right\}\rangle$ and $S_\geq := \langle\left\{ m \in \N : m \geq n, m::n \right\}\rangle$ are numerical and have the following Frobenius numbers:
\[
F(S_\leq) = n\left(-1 + \sum_{1 < p^k \divides \divides n} \frac{p^k - 1}{p^k} \right) \qquad\qquad
F(S_\geq) = n\left(-1 + \sum_{1 < p^k \divides \divides n} \frac{2p^k - 1}{p^k}\right)
\]

\end{theorem}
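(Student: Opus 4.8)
The plan is to reduce both claims to a Chinese Remainder Theorem computation of the Apéry sets of $S_\le$ and $S_\ge$ with respect to $n$ itself. We may assume $n > 1$ (when $n = 1$ every positive integer is almost divisible, so $S_\le = S_\ge = \N$ and both formulas read $-1$). Write $n = \prod_{i=1}^r p_i^{k_i}$ for the prime factorization and set $n_j := n/p_j^{k_j} = \prod_{i \ne j} p_i^{k_i}$ for $1 \le j \le r$. Comparing $p_i$-adic valuations shows that $m :: n$ if and only if $n_j \mid m$ for some $j$: multiplying $m$ by a sufficiently large power of $p_j$ can supply the missing $p_j$-part of $n$, but cannot help with any other prime. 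Hence $\{m \in \N : m :: n\} = \bigcup_{j=1}^r (n_j\N \setminus \{0\})$, so that $S_\le = \langle n_1, \ldots, n_r \rangle$ — the higher multiples of each $n_j$ being redundant generators, which also proves the displayed equality $\langle\{m \le n,\, m :: n\}\rangle = \langle\{m :: n\}\rangle$ since each $n_j \le n$ — while the generating set of $S_\ge$ is $\bigcup_{j=1}^r \{c\, n_j : c \in \N,\ c \ge p_j^{k_j}\}$, because $c\, n_j \ge n \iff c \ge p_j^{k_j}$. The minimum $p_i$-adic valuation over $n_1, \ldots, n_r$ is $0$, so $\gcd(n_1, \ldots, n_r) = 1$ and both semigroups are numerical; moreover $n = p_j^{k_j} n_j$ lies in each.

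The structural heart of the proof is the identification $S_\le = \{\sum_{j=1}^r c_j n_j : c_j \in \N\}$ and $S_\ge = \{\sum_{j=1}^r c_j n_j : c_j \in C_j\}$, where $C_j := \{0\} \cup \{c \in \N : c \ge p_j^{k_j}\}$. The second equality holds because, grouping the generators of $S_\ge$ according to which $n_j$ they are a multiple of, the attainable total coefficient of $n_j$ runs exactly over $\langle p_j^{k_j}, p_j^{k_j}+1, \ldots, 2p_j^{k_j}-1 \rangle = C_j$ (using the elementary fact that $\langle a, a+1, \ldots, 2a-1\rangle = \{0\} \cup \{c \in \N : c \ge a\}$). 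Now apply the isomorphism $\mathbb{Z}/n\mathbb{Z} \cong \prod_{i=1}^r \mathbb{Z}/p_i^{k_i}\mathbb{Z}$: since $p_i^{k_i} \mid n_j$ whenever $j \ne i$, one has $\sum_j c_j n_j \equiv c_i n_i \pmod{p_i^{k_i}}$, so prescribing the residue class of an element modulo $n$ imposes the \emph{independent} congruences $c_i \equiv \bar a_i \pmod{p_i^{k_i}}$, where $\bar a_i \in \{0, \ldots, p_i^{k_i}-1\}$ is the least nonnegative residue of $\rho_i\, n_i^{-1}$ (with $\rho_i$ the $i$-th CRT coordinate of the class and $n_i^{-1}$ taken modulo $p_i^{k_i}$, which exists as $\gcd(n_i, p_i) = 1$). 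Since the objective $\sum_j c_j n_j$ also splits as a sum of nonnegative terms, finding the smallest semigroup element in a given residue class decouples into $r$ independent one-variable minimizations, which yields explicit descriptions of $\Ap(S_\le, n)$ and $\Ap(S_\ge, n)$, each of size $n$.

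For $S_\le$ the minimum is reached at $c_j = \bar a_j$, so $\Ap(S_\le, n) = \bigoplus_{j=1}^r \{0, n_j, 2n_j, \ldots, (p_j^{k_j}-1)n_j\}$, whose maximum is $\sum_{j=1}^r (p_j^{k_j}-1) n_j$. For $S_\ge$, in coordinate $j$: if $\bar a_j = 0$ the minimum is $c_j = 0$; if $\bar a_j \ne 0$ then $\bar a_j$ itself lies in $(0, p_j^{k_j})$ hence outside $C_j$, so the minimum is $c_j = \bar a_j + p_j^{k_j}$. The largest element of the Apéry set is therefore obtained by taking every $\bar a_j = p_j^{k_j} - 1 \ge 1$, giving $\sum_{j=1}^r (2p_j^{k_j}-1) n_j$. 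Plugging into $F(S) = \max(\Ap(S, n)) - n$ from $(7)$ and using $(p_j^{k_j}-1) n_j = n\frac{p_j^{k_j}-1}{p_j^{k_j}}$, respectively $(2p_j^{k_j}-1) n_j = n\frac{2p_j^{k_j}-1}{p_j^{k_j}}$, produces exactly the two claimed formulas (the sum over $j$ being precisely the sum over maximal prime powers $1 < p^k \divides\divides n$). The main obstacle I anticipate is pinning down the coefficient sets $C_j$ for $S_\ge$ correctly — checking that after grouping by type the attainable multiples of $n_j$ form the semigroup $\langle p_j^{k_j}, \ldots, 2p_j^{k_j}-1\rangle$, that this equals $\{0\}\cup\{c \ge p_j^{k_j}\}$, and that cross terms between different $n_j$'s never interfere with the CRT decoupling; the remaining steps are routine bookkeeping and the degenerate cases $n = 1$ and $n$ a prime power.
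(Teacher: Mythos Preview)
Your proof is correct, but it takes a different route from the paper. The paper applies its reduction-graph framework: for $S_\le$ it composes by the root the $r$ binary edges $\langle p_j^{k_j}\rangle \leftarrow \langle 1\rangle$, and for $S_\ge$ it composes the $r$ infinite arithmetic edges $\langle p_j^{k_j}\rangle \leftarrow \langle p_j^{k_j}+1\rangle,\langle p_j^{k_j}+2\rangle,\ldots$, then reads off the Frobenius numbers from Theorem~3.1 and Corollary~3.1. You instead compute $\Ap(S,n)$ directly via a CRT decoupling of the coefficients $c_j$.

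The two arguments are closely related under the hood: your observation that $\sum_j c_j n_j \equiv c_i n_i \pmod{p_i^{k_i}}$, which makes the $c_j$'s independent modulo the $p_j^{k_j}$, is exactly the content of the paper's ``composition by the root'' in this instance, and your per-coordinate Ap\'ery contributions $\{0,n_j,\ldots,(p_j^{k_j}-1)n_j\}$ and $\{0\}\cup\{(p_j^{k_j}+1)n_j,\ldots,(2p_j^{k_j}-1)n_j\}$ coincide with the scaled remainder sets of the binary edge (relation~(15)) and the infinite arithmetic edge (relation~(67)) respectively. What your approach buys is self-containment---no need for the graph machinery or the general Theorem~3.1; what the paper's approach buys is that both $S_\le$ and $S_\ge$ (and Theorem~1.4 on divisor functions) drop out of the same composition template, illustrating the paper's thesis that many disparate-looking Frobenius computations share a common graphical structure. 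Your detour through $\langle p_j^{k_j},\ldots,2p_j^{k_j}-1\rangle$ for $C_j$ is slightly redundant (any $c\ge p_j^{k_j}$ is already a single generator), but harmless.
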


Proofs for Theorems $1.1$ to $1.5$ are given in Section $6$, using the graphical approach developed in Sections $2$ to $5$. We note that the first two theorems can also be tackled using a recursive formula of Brauer and Shockley \cite{Frobenius}, used to compute the Frobenius number of $\langle a_1, \ldots, a_n \rangle$ when $\gcd(a_1, \ldots, a_{n-1}) > 1$. However, as shown in Corollary $5.1$, that formula is itself a simple consequence of our graphical method, together with similar relations about genera and Hilbert series.

Our approach also helps eliminate certain constraints on the semigroups already studied in literature: in particular, to allow negative common differences in modified arithmetic sequences \cite{Modified}, and to disregard the order restrictions on compound sequences \cite{Compound}. As a final note, one class of semigroups not covered by our methods is given by $\left\langle (n-1)^k, n^k, (n+1)^k \right\rangle$ \cite{Powers}, with $n \geq 2$, $k \geq 1$.

\section{Generalizing Ap\'ery Sets: Reductions and Remainder Sets}

In light of relations $(7)-(11)$, once we find a closed form expression for $\Gen(\Ap(S, a); X)$, we will find it very easy to characterize the numerical semigroup $S$. Therefore, the only hard part of solving the Frobenius problem (and associated questions) remains determining a simple expression of some Ap\'ery set $\Ap(S, a)$. We will do this by studying a type of relations that generalize equation $(5)$, called \emph{reductions}; these reductions turn out to have a compact visual representation as edges of a special graph, and obey convenient composition properties that can be manipulated graphically:

\begin{definition}[Reductions]
Let $A$ and $B$ be nonempty sets of nonnegative integers containing $0$. We say that $B$ can be \emph{reduced} by (or with respect to) $A$ if there exists a finite set $R$ such that:
\begin{equation}
A + B = A \oplus R
\end{equation}

We will refer to the equality above as a \emph{reduction}, to $R$ as the \emph{remainder set} of the reduction, and to the cardinality of the remainder set, $w := |R|$, as the \emph{weight} of the reduction. Note that since $A$ and $B$ contain $0$, the remainder set $R$ must also contain $0$, so $w \geq 1$.

\end{definition}

\begin{remark}
 In light of Lemma $1.1$, the remainder set corresponding to a reduction is \textbf{unique}, since one can take inverses of the power series $\Gen(A; X)$.
\end{remark}

\begin{example}
Given coprime positive integers $a$ and $b$, one has the so-called \emph{binary reduction}:
\begin{equation}
\langle a \rangle + \langle b \rangle = \langle a \rangle \oplus \{br : 0 \leq r < a\}
\end{equation}

The identity above follows because one can rewrite any $ax + by$ with $x, y \in \N$ as $a(x + bq) + br$, where $y = aq + r$ and $0 < r \leq a$. Therefore, we can restrict the value of $y$ to a residue modulo $a$ without losing elements of the set $\langle a \rangle + \langle b \rangle = \{ax + by : x, y \geq 0\}$. The sum in the RHS of $(15)$ is a direct sum since there are no repetitions modulo $a$ within $\{br : 0 \leq r < a\}$ (using that $\gcd(a, b) = 1$).

One can use this observation to solve the Coin problem for two coin denominations \cite{Sylvester, Sylvester2}, i.e. to find the Frobenius number of $\langle a, b \rangle = \langle a \rangle + \langle b \rangle$. By $(15)$, $\Ap(\langle a, b \rangle, a)$ is precisely the remainder set of our reduction: $\{br : 0 \leq r < a\}$, so by $(7)$, $F(\langle a, b \rangle)$ should be $a(b-1) - a = ab - a - b$.

In general, any Ap\'ery set is an instance of a remainder set, obtained when the set $A$ is a monogenic semigroup $\langle a \rangle$ and the weight of the reduction is $a$. Indeed, if $S$ is a numerical semigroup and $a \in S$, then we can write $S = \langle \{a\} \cup G \rangle$ for some remaining system of generators $G$. Then $S = \langle a \rangle + \langle G \rangle = \langle a \rangle \oplus \Ap(S, a)$, which fits the pattern of a reduction for $B = \langle G \rangle$. This pattern turns out to be present in many other ways within the structure of numerical semigroups, once we allow the set $\langle a \rangle$ to be replaced by a more general set $A$. This is what motivated the definition of reductions, along with the concise graphical representation further described.
\end{example}

\begin{notation}
The reductions described in relations $(14)$ and $(15)$ (note that the latter is a particular case of the former) can be represented as the following \emph{reduction edges}, where $w = |R|$:

\begin{figure}[h]
\centering
\includegraphics[scale = 0.85]{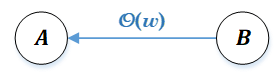}
\hspace{1cm}
\includegraphics[scale = 0.85]{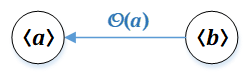}
\caption{Reduction Edge (left) vs. Binary Reduction Edge (right)}
\end{figure}
\FloatBarrier

Here we used a variation of the big-O notation: for our purposes, the big-curly-O will indicate the cardinality of a set whose exact form is known, but omitted for brevity. For instance, we say that the set $R$ is $\bigO(w)$ because it has cardinality $w$, and we may write the reduction from $(14)$ in a less comprehensive version as $A + B = A \oplus \bigO(w)$. Writing $\bigO(w)$ next to an edge indicates that the remainder set of that reduction is $\bigO(w)$, and thus that the \emph{weight of the edge} (i.e. the weight of the reduction) is $w$. Luckily, although the exact forms of remainder sets are essential in studying numerical semigroups, one can get very far by looking only at the weights of the reductions used.

On the other hand, it will be helpful to break down the sets $A$ and $B$ into finite or countable sums of sets of nonnegative integers containing $0$, say $A = A_1 + \ldots + A_k + \ldots$ and $B = B_1 + \ldots + B_q + \ldots$; for example, if $A$ is a semigroup, one can choose the $A_i$'s as the monogenic semigroups corresponding to the generators of $A$. This allows for a more detailed representation of reduction edges:

\begin{figure}[h]
\centering
\includegraphics[scale = 0.85]{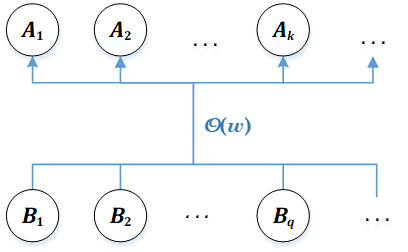}
\hspace{1cm}
\includegraphics[scale = 0.85]{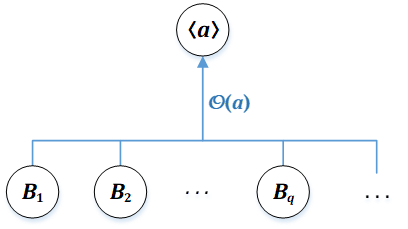}
\caption{General Reduction Edge (left) vs. Ap\'ery Reduction Edge (right)}
\end{figure}
\FloatBarrier
\end{notation}

Figure $2$ (left) displays a general reduction edge, which describes the same reduction $A + B = A \oplus \bigO(w)$ as Figure $1$ (left), but in more detail:
\[
\left(A_1 + \ldots + A_k + \ldots\right) + \left(B_1 + \ldots + B_q + \ldots\right) = \left(A_1 + \ldots + A_k + \ldots\right)  \oplus \bigO(w)
\]

We emphasize that Figure $2$ (left) is regarded as a \emph{single} edge corresponding to a \emph{single} reduction, although it may have multiple inputs and outputs. The advantage of splitting $A$ and $B$ into sums of other sets is that each $A_i$ (respectively $B_i$) can now take part in other reductions, independently of the sets $A_j$ (respectively $B_j$) with $j \neq i$. This brings more freedom in constructing graphs with complex networks of reduction edges.

As mentioned before, an important category of remainder sets are Ap\'ery sets, which lead to so-called \emph{Ap\'ery reductions} of the form: $\langle a \rangle + B = S = \langle a \rangle \oplus \Ap(S, a)$. Writing $B = B_1 + \ldots + B_q + \ldots$ once again, we obtain the graphical representation from Figure $2$ (right), which describes the equality:
\[
\langle a \rangle + \left(B_1 + \ldots + B_q + \ldots\right) = \langle a \rangle \oplus \bigO(a)
\]

Still, Figure $2$ (right) by itself does not display the full complexity behind finding the Ap\'ery set $\Ap(S, a)$, nor does it provide a simple expression for $\Gen(\Ap(S, a); X)$. Ideally, the reduction of $B$ with respect to $\langle a\rangle$ could be decomposed into a series of other simpler reductions, whose remainder sets are easy to describe. The resulting hypergraph, which captures the structure of $\Ap(S, a)$ much more thoroughly, will be called a \emph{reduction graph} of the semigroup $S$, to be formalized in the next section. For a preliminary visualization of this concept, Figure $3$ shows two reduction graphs of the semigroup $S = \langle 9, 12, 15, 20 \rangle$, based on the following reductions that will be explained later:
\begin{align*}
&\langle 9 \rangle + \left(\langle 12 \rangle + \langle 15 \rangle \right) = \langle 9 \rangle \oplus \bigO(3)
\qquad \qquad \qquad \q \hspace{0.5mm}
\langle 20 \rangle + \langle 15 \rangle = \langle 20 \rangle \oplus \bigO(4)
\\
&
\langle 12 \rangle + \langle 20 \rangle = \langle 12 \rangle \oplus \bigO(3)
\qquad \qquad \quad \quad
\langle 15 \rangle + \left(\langle 12 \rangle + \langle 9 \rangle \right) = \langle 15 \rangle \oplus \bigO(5)
\end{align*}

\begin{figure}[h]
\centering
\includegraphics[scale = 0.8]{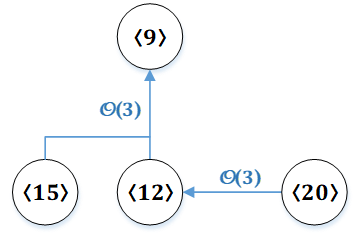}
\hspace{1cm}
\includegraphics[scale = 0.8]{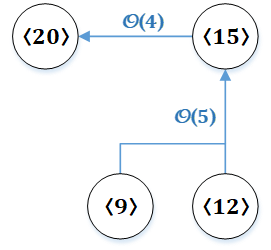}
\caption{Numeric Examples of Reduction Graphs}
\end{figure}
\FloatBarrier

\section{Reduction Graphs and Computational Aid}

\begin{definition}[Reduction Graphs]
Suppose that $G$ is an \emph{acyclic} weighted oriented hypergraph with the following properties:
\begin{enumerate}
\item The nodes/vertices of $G$, gathered in $V(G)$, are nonempty sets of nonnegative integers containing $0$. $V(G)$ may be infinite and may contain repetitions (as a multicollection of sets).

\item The edges of $G$, gathered in $E(G)$, are reduction edges with the form discussed in the previous section (see Figure $2$). Each edge $e \in E(G)$ can have any number of inputs and outputs, and carries a weight $w(e)$ equal to the cardinality of its corresponding remainder set, a set denoted by $\Rem(e)$. To emphasize that the weights are cardinalities of remainder sets, we shall write $\bigO(w)$ (rather than $w$) near the graphical representation of the edge. We also require that $E(G)$ is \textbf{finite}.

\item All nodes of the graph except for one have outdegree equal to $1$. The remaining node, called the \emph{root} of $G$, has outdegree $0$ and must be a \textbf{monogenic} semigroup. Since $G$ is acyclic and $E(G)$ is finite, this means that every path in $G$ eventually terminates at the root. We will refer to the positive integer that generates the monogenic semigroup in the root as the \emph{root generator} of $G$, denoted $r(G)$.
\end{enumerate}

Under these conditions, we say that $G$ is a \emph{reduction graph} of the set (or \emph{describing} the set) $S(G) := \sum_{X \in V(G)} X$. In practice, $S(G)$ will usually be a numerical semigroup.

\end{definition}

\begin{example}[Geometric Sequences]
Given coprime positive integers $a$ and $b$, let $V(G)$ consist of all nodes of the form $\langle a^{n-k} b^k \rangle$ for $0 \leq k \leq n$. Then $S(G) = \sum_{0 \leq k \leq n} \langle a^{n-k} b^k \rangle = \langle a^n, a^{n-1}b, \ldots, b^n \rangle$ is the numerical semigroup generated by a geometric progression \cite{Geom} (note that $\gcd(a^n, b^n) = 1$). To complete the definition of the reduction graph $G$, consider the following structure of edges:

\begin{figure}[h]
\centering
\includegraphics[scale = 0.85]{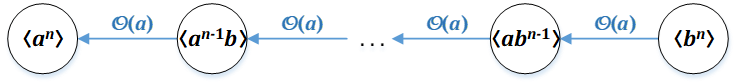}
\caption{Reduction Graph for a Geometric Sequence}
\end{figure}

More precisely, $E(G)$ consists of $n$ reduction edges of weight $a$. Each edge goes from $\langle a^{n-k-1}b^{k+1} \rangle$ to $\langle a^{n-k}b^{k} \rangle$ for some $0 \leq k < n$, describing a reduction equivalent to $(15)$ after a scaling by $a^{n-k-1}b^{k}$:
\[
\langle a^{n-k}b^k \rangle + \langle a^{n-k-1}b^{k+1} \rangle = \langle a^{n-k}b^k \rangle \oplus a^{n-k-1}b^{k} \{br : 0 \leq r < a \}
\]

The root generator of $G$ is $a^n$, which equals the product of all the weights of the edges in $E(G)$. As we shall see soon (in Theorem $3.1$ and Corollary $3.1$), this property of a reduction graph allows us to determine the Ap\'ery set, Frobenius number, genus, etc. of the studied numerical semigroup $S(G)$. A variation of this example arises for \emph{composed} geometric sequences, described in Figure $5$ (where we make the assumption that $\gcd(a, b) = \gcd(a, c) = \gcd(c, d) = 1$):

\begin{figure}[h]
\centering
\includegraphics[scale = 0.85]{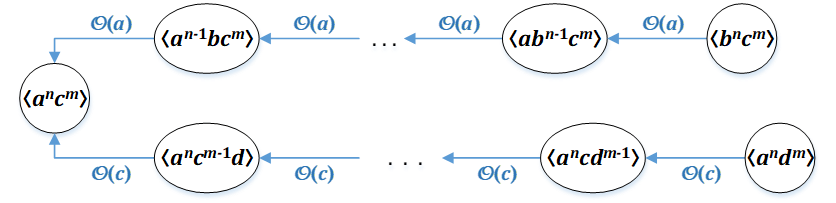}
\caption{Reduction Graph for Composed Geometric Sequences}
\end{figure}
\FloatBarrier

Similarly, one could \emph{compose} several different geometric sequences to generate a numerical semigroup (the reason behind calling these \emph{composed} sequences will be made clear in Section $5$), or mix them with arithmetic progressions, Mersenne numbers, and so on; there is a wide range of possibilities. The motivation behind this graphical formulation lies in the following proposition:
\end{example}

\begin{proposition}
If $G$ is a reduction graph, the following equality holds:
\begin{equation}
S(G) = \langle r(G) \rangle + \sum_{e \in E(G)} \Rem(e)
\end{equation}

We note that the RHS is a sum between a monogenic semigroup $\langle r(G) \rangle$ and a finite set. If this is a direct sum, then the latter finite set must coincide by $(5)$ with the Ap\'ery set $\Ap(S(G), r(G))$.
\end{proposition}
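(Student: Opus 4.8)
The plan is to argue by induction on the number of edges $N := |E(G)|$, peeling off one edge at a time starting from the ``far end'' of the graph. When $N = 0$, no vertex other than the root can have outdegree $1$, so $V(G) = \{\langle r(G)\rangle\}$ and $S(G) = \langle r(G)\rangle$; this matches $(16)$ once the empty sum $\sum_{e \in \emptyset}\Rem(e)$ is read as $\{0\}$.

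For $N \geq 1$, the first task is to find an edge that can be safely removed. Call a vertex a \emph{leaf} if it is an output of no edge, and define a relation $\prec$ on the finite set $E(G)$ by putting $f \prec e$ whenever some output of $f$ is an input of $e$. A $\prec$-cycle $f_1 \prec \cdots \prec f_k \prec f_1$ would yield, by splicing together one suitable output of each $f_i$, a closed directed walk in $G$, which is impossible since $G$ is acyclic; hence $\prec$ has a minimal element $e^\ast$, and minimality says precisely that every input of $e^\ast$ is a leaf. I expect this extraction step to be the most delicate point of the proof: since $V(G)$ may be infinite and a single edge may have infinitely many inputs or outputs, one cannot simply pick ``a deepest vertex'', and the combinatorics of acyclic \emph{hyper}graphs needs a little care here. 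Once $e^\ast$ is found, observe that, as every non-root vertex has outdegree $1$, each input of $e^\ast$ lies on $e^\ast$ and on no other edge. Deleting $e^\ast$ together with all of its input vertices therefore produces a hypergraph $G'$, and one checks directly that $G'$ is again a reduction graph: it has the same root $r(G') = r(G)$, it has $E(G') = E(G)\setminus\{e^\ast\}$, its surviving edges still have all their endpoints present (the outputs of $e^\ast$ are not inputs of $e^\ast$, since those are leaves, so they remain, and the inputs and outputs of other edges are untouched), and acyclicity together with the property that every path terminates at the root are inherited from $G$.

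By the induction hypothesis, $S(G') = \langle r(G)\rangle + \sum_{e \in E(G)\setminus\{e^\ast\}} \Rem(e)$. To put $e^\ast$ back, note that the vertices removed are exactly the inputs of $e^\ast$, so $S(G) = S(G') + \big(\sum_{X \text{ input of } e^\ast} X\big)$, where we use that addition of sets containing $0$ is commutative and associative, infinite sums being the unions of their finite partial sums as in the Remark after the definition of numerical semigroups. Now write $P := \sum_{Y \text{ output of } e^\ast} Y$; since all outputs of $e^\ast$ belong to $V(G')$, the set $P$ is a sub-sum of $S(G')$, say $S(G') = P + Q$. The reduction recorded by the edge $e^\ast$ is exactly $P + \big(\sum_{X \text{ input of } e^\ast} X\big) = P \oplus \Rem(e^\ast)$. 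Substituting and regrouping, $S(G) = Q + \big(P + \sum_{X \text{ input of } e^\ast} X\big) = Q + \big(P + \Rem(e^\ast)\big) = S(G') + \Rem(e^\ast) = \langle r(G)\rangle + \sum_{e \in E(G)} \Rem(e)$, which completes the induction and proves $(16)$.

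It remains to address the two concluding observations. The set $\sum_{e \in E(G)} \Rem(e)$ is finite because $E(G)$ is finite and each $\Rem(e)$ has finite cardinality $w(e)$, so $(16)$ indeed presents $S(G)$ as the sum of the monogenic semigroup $\langle r(G)\rangle$ and a finite set. If this sum happens to be direct, then $S(G) = \langle r(G)\rangle \oplus \sum_{e} \Rem(e)$, while also $S(G) = \langle r(G)\rangle \oplus \Ap(S(G), r(G))$ by $(5)$; cancelling the factor $\Gen(\langle r(G)\rangle; X)$ via Lemma $1.1$ (equivalently, appealing to the uniqueness of remainder sets) forces $\sum_{e} \Rem(e) = \Ap(S(G), r(G))$.
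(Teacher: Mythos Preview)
Your proof is correct and follows essentially the same strategy as the paper's: locate an edge $e^\ast$ all of whose inputs have indegree zero (your $\prec$-minimality argument is exactly the paper's ``start at a random edge and walk backwards until you stop'' argument, dressed up slightly more abstractly), remove it together with its input vertices, and iterate. The only cosmetic difference is that you phrase the iteration as an induction on $|E(G)|$, while the paper phrases it as a deconstruction algorithm maintaining the invariant $S = \big(\sum_{X \in V(G)} X\big) + R$; the content is identical.
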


\begin{proof}
One should understand a reduction graph as a sequence of reductions to be applied iteratively, arranged in a partial chronological order indicated by the direction of the paths; it is essential that the graph is acyclic so that we don't get stuck in an infinite loop of reductions. More precisely, before any reduction is performed, the set $S(G)$ can be expressed by definition as:
\begin{equation}
S(G) = \sum_{X \in V(G)} X
\end{equation}

We will apply an algorithm that deconstructs the reduction graph $G$ while simplifying the sum in $(17)$, until we are left with the sum in $(16)$. Initially, denote $S := S(G)$ (we need this notation because $S(G)$ will change during the algorithm) and $R := \{ 0 \}$ (this will represent a cumulative remainder set; initially, the remainder is null). The equality $S = \left(\sum_{X \in V(G)} X\right) + R$ will be an invariant of our algorithm. Then, at each iteration, perform the following steps:

\begin{enumerate}
\item If possible, choose an edge $e \in E(G)$ such that the input nodes of $e$ have indegree $0$. Let $B = \sum_{j \in J} B_j$ be the sum of the input sets of $e$ and $A = \sum_{i \in I} A_i$ be the sum of the output nodes of $e$. Since $e \in E(G)$ and we assume $G$ is in a valid state (no dangling edges), all the inputs and outputs of $e$ are currently nodes in the graph $G$.

\item By Definition $2.1$, we have $A + B = A + \Rem(e)$ (we shall ignore the direct sum for now). Consequently, within the sum $S = \left( \sum_{X \in V(G)} X \right) + R$, replace the partial sum $\sum_{i \in I} A_i + \sum_{j \in J} B_j$ with the sum $\sum_{i \in I} A_i + \Rem(e)$; this will preserve the set $S$. Then, replace $R$ with $R + \Rem(e)$.

\item Eliminate the edge $e$ and its input nodes from the graph. This operation leaves the graph in a valid state since the input nodes of $e$ had indegree $0$ (by our choice) and outdegree $1$ (by condition $3$ from Definition $3.1$). Also, by eliminating the nodes $B_j$ with $j \in J$ from $V(G)$, we have recovered the equality $S = \left( \sum_{X \in V(G)} X \right) + R$.
\end{enumerate}

We note that each loop preserves the properties of a reduction graph listed in Definition $3.1$ (only condition $3$ really needs to be checked, which is easy). Figure $6$ provides a minimalist illustration of this algorithm (the eliminated nodes and edges are marked in red):

\begin{figure}[h]
\centering
\includegraphics[scale = 0.9]{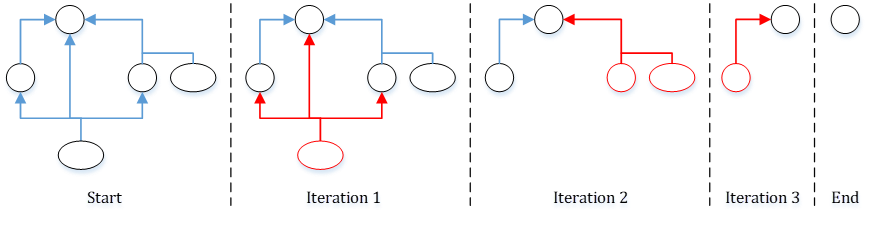}
\caption{Deconstruction of a Reduction Graph}
\end{figure}
\FloatBarrier

As suggested by the figure, we claim that we can iterate our algorithm as long as $E(G)$ is nonempty. Indeed, to pick the edge from step $1$, we can start by choosing a random edge $e \in E(G)$; if an input node $x$ of $e$ has nonzero indegree, replace $e$ with an edge that has $x$ as an output node, then repeat. This operation cannot proceed indefinitely since $G$ is acyclic and $E(G)$ is finite (if we reached the same edge twice, we would have found a cyclic path from an input node of that edge to itself). So there exists some $e \in E(G)$ whose input nodes all have indegree zero.

Therefore, our algorithm will only terminate when $E(G) = \emptyset$ (and this is bound to happen since there are finitely many edges, one of which is eliminated at each step). In consequence, once the algorithm terminates, all nodes that initially had outdegree $1$ must have been eliminated from $V(G)$ at step $3$ of some iteration, and the only remaining node is the root (which was never eliminated since it has outdegree $0$). Letting $R$ be the sum of all remainder sets from the initial graph, our invariant identity now reads:
\begin{equation}
S = \langle r(G) \rangle + R
\end{equation}

This gives us precisely the desired relation $(16)$, once we reconstruct the graph $G$.
\end{proof}

\begin{example}
The algorithm described above becomes more intuitive when visualized. Take, for instance, $V(G) = \{ \langle ab \rangle, \langle bc \rangle, \langle ca \rangle, \langle 2ab - ac \rangle, \langle ab + bc + ca \rangle \}$, where $a, b, c$ are pairwise relatively prime positive integers with $b < 2c$. The edges of the reduction graph $G$ are described in the figure below:
\begin{figure}[h]
\centering
\includegraphics[scale = 0.8]{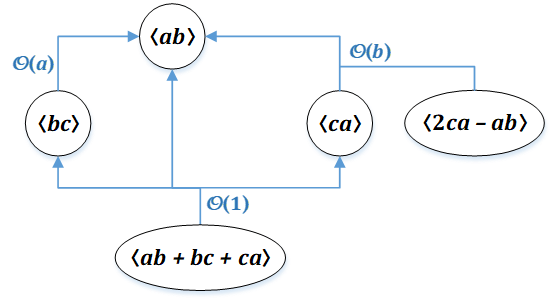}
\caption{Example of a Reduction Graph}
\end{figure}
\FloatBarrier

In the next section, we will develop the tools to check that each of these reductions is valid; for now, let us just assume that they work. Once can see that $r(G) = ab$, and $G$ contains three edges having weights $a$, $b$ and $1$; let us denote these edges by $e_a$, $e_b$, respectively $e_1$. Each edge corresponds to a reduction as described in Definition $2.1$, so there should exist finite remainder sets $\Rem(e_a)$, $\Rem(e_b)$, $\Rem(e_1)$ such that:
\begin{align}
&\langle ab \rangle + \langle bc \rangle = \langle ab \rangle \oplus \Rem(e_a) \\
&\langle ab \rangle + \left(\langle ca \rangle + \langle 2ca - ab \rangle\right) = \langle ab \rangle \oplus \Rem(e_b) \\
&\left(\langle ab \rangle + \langle bc \rangle + \langle ca \rangle\right) + \langle ab + bc + ca \rangle = (\langle ab \rangle + \langle bc \rangle + \langle ca \rangle) \oplus \Rem(e_1)
\end{align}

Given the weights of these edges, we also know that:
\begin{align}
&|\Rem(e_a)| = w(e_a) = a\\
&|\Rem(e_b)| = w(e_b) = b \\
&|\Rem(e_1)| = w(e_1) = 1
\end{align}

In particular, since any remainder set must contain $0$, we can infer from $(24)$ that $\Rem(e_1)$ should be the singleton $\{0\}$. Finding $\Rem(e_a)$ and $\Rem(e_b)$ will be easy using the results of the next section, but their exact forms are irrelevant for this example.

The arrangement of the edges in Figure $7$ gives two possible chronological orderings of the reductions: we should apply either $(21)$ then $(20)$ then $(19)$, or $(21)$ then $(19)$ then $(20)$. Suppose WLOG that we choose the first ordering; then the reasoning illustrated by the paths in Figure $7$ (which corresponds to the reduction algorithm shown in Figure $6$) is that:
\begin{equation}
\begin{split}
S(G) \xlongequal{\text{def}}&\q \langle ab \rangle + \langle bc \rangle + \langle ca \rangle + \langle 2ca - ab \rangle + \langle ab + bc + ca \rangle \\
\xlongequal{(21)}&\q \langle ab \rangle + \langle bc \rangle + \langle ca \rangle + \langle 2ca - ab \rangle + \Rem(e_1)\\
\xlongequal{(20)}&\q \langle ab \rangle + \langle bc \rangle + \Rem(e_b) + \Rem(e_1) \\
\xlongequal{(19)}&\q \langle ab \rangle + \Rem(e_a) + \Rem(e_b) + \Rem(e_1),
\end{split}
\end{equation}

which is the sum between the root $\langle ab \rangle$ and a finite set, as in relation $(16)$.  As we shall see soon, we can replace the sums above with direct sums provided that our reduction graph is \emph{total}:
\end{example}

\begin{definition}[Balance and Totality]
The \emph{balance} of a reduction graph $G$ is defined as:
\begin{equation}
\Bal(G) := \frac{r(G)}{\prod_{e \in E(G)} w(e)}
\end{equation}

We say that $G$ is a \emph{total} reduction graph if $\Bal(G) = \gcd(S(G))$. We note that the fraction above is well-defined, since $0$ must belong to each set $\Rem(e)$, whence $w(e) = |\Rem(e)| \geq 1$. 
\end{definition}

\begin{lemma}
For any reduction graph $G$ of a \emph{numerical} semigroup $S$, one has:
\begin{equation}
\Bal(G) \leq 1
\end{equation}
\end{lemma}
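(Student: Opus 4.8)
The plan is to combine Proposition 3.1 with a short counting argument on residue classes modulo the root generator. By Proposition 3.1, since $G$ is a reduction graph we have $S = S(G) = \langle r(G)\rangle + T$, where $T := \sum_{e \in E(G)} \Rem(e)$ is a finite set (a sum of finitely many finite sets, as $E(G)$ is finite). The first step is to bound $|T|$ from above. Because each $\Rem(e)$ contains $0$ (as noted after Definition 2.1) and $E(G)$ is finite, the natural map $\prod_{e \in E(G)} \Rem(e) \to T$ sending a tuple $(r_e)_{e}$ to $\sum_{e} r_e$ is surjective, so $|T| \le \prod_{e \in E(G)} |\Rem(e)| = \prod_{e \in E(G)} w(e)$.

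The second step is to bound $|T|$ from below by $r(G)$. Write $r := r(G) \ge 1$. Every element of $S = \langle r\rangle + T$ is of the form $rn + t$ with $n \ge 0$ and $t \in T$, hence is congruent modulo $r$ to an element of $T$; thus the image of $S$ in $\N/r\N$ is contained in the image of $T$. But $S$ is a numerical semigroup, so $\N \setminus S$ is finite, and therefore $S$ meets every residue class modulo $r$. Consequently $T$ meets every residue class modulo $r$ as well, which forces $|T| \ge r$.

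Combining the two bounds yields $r(G) \le |T| \le \prod_{e \in E(G)} w(e)$, which is exactly $\Bal(G) = r(G)/\prod_{e \in E(G)} w(e) \le 1$. I would also check the degenerate case $E(G) = \emptyset$ separately for completeness: then $S = \langle r(G)\rangle$, which is numerical only if $r(G) = 1$, and the empty product equals $1$, so $\Bal(G) = 1$, consistent with the claim.

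The argument is short, so there is no single hard step; the two points that need care are the surjectivity used for the upper bound on $|T|$ (which genuinely relies on each $\Rem(e)$ containing $0$, so that a partial tuple can be completed by zeros) and the use of cofiniteness of $S$ to guarantee a complete residue system modulo $r(G)$ — this is precisely where the hypothesis that $S$ is \emph{numerical}, rather than an arbitrary subsemigroup of $\N$, is essential.
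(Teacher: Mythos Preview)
Your proof is correct and follows essentially the same approach as the paper: invoke Proposition~3.1, bound $|T|$ below by $r(G)$ using that a numerical semigroup meets every residue class, and bound $|T|$ above by the product of the weights via the surjection from the Cartesian product. Your explicit treatment of the edge case $E(G)=\emptyset$ is a nice addition; one small remark is that the surjectivity of $(r_e)_e \mapsto \sum_e r_e$ onto $T$ holds by the very definition of a finite sum of sets and does not actually require $0 \in \Rem(e)$ (that hypothesis is only needed for infinite sums).
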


\begin{proof}
By Proposition $3.1$, we know that:
\begin{equation}
S = \langle r(G) \rangle + \sum_{e \in E(G)} \Rem(e)
\end{equation}

Since $S$ contains all sufficiently large integers, the relation above implies that $\sum_{e \in E(G)} \Rem(e)$ must contain at least one integer from each residue class modulo $r(G)$. In particular:
\begin{equation}
\left| \sum_{e \in E(G)} \Rem(e) \right| \geq r(G)
\end{equation}

Using the fact that the cardinality of a sum of sets is at most the product of their cardinalities (since the function $f(a, b) = a + b$ is a surjection from $A \times B$ onto $A + B$), we get that:
\begin{equation}
\begin{split}
\Bal(G) = 
\frac{r(G)}{\prod_{e \in E(G)} |\Rem(e)|} \leq \frac{r(G)}{\left| \sum_{e \in E(G)} \Rem(e) \right|} \leq 1
\end{split}
\end{equation}
\end{proof}

\begin{remark}
By a simple scaling (a technique to be detailed in Section $5$), one can see that for any reduction graph $G$ of a semigroup, $\Bal(G) \leq \gcd(S(G))$. In other words, totality (the state in which $\Bal(G) = \gcd(S(G))$) is in some sense the best we can get out of a reduction graph (i.e., the greatest product of weights given the nodes). The reason we prefer to work with total reduction graphs is the following theorem:
\end{remark}

\begin{theorem}
If $G$ is a total reduction graph of a numerical semigroup $S$ (so $\Bal(G) = 1$), the following equality of sets holds:
\begin{equation}
\Ap(S, r(G)) = \bigoplus_{e \in E(G)} \Rem(e)
\end{equation}

If all edges of $G$ correspond to simple reductions whose remainder sets have generating functions with closed forms (like those from the next section), we say that $S$ is \emph{graph-solvable} via $G$.
\end{theorem}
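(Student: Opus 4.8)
The plan is to bootstrap from Proposition $3.1$, which already supplies $S = \langle r(G) \rangle + \sum_{e \in E(G)} \Rem(e)$ unconditionally; the entire content of the theorem is to upgrade the two implicit ``$+$'' signs here into direct sums, and totality is exactly what powers this upgrade. So I would set $a := r(G)$ and $R := \sum_{e \in E(G)} \Rem(e)$, and recall from the proof of Lemma $3.2$ that, since $S$ is numerical and hence contains all sufficiently large integers, the identity $S = \langle a \rangle + R$ forces $R$ to meet every residue class modulo $a$, so $|R| \geq a$. On the other hand, $R$ is the image of $\prod_{e \in E(G)} \Rem(e)$ under the summation map $(x_e)_e \mapsto \sum_e x_e$, which is surjective, so $|R| \leq \prod_{e \in E(G)} |\Rem(e)| = \prod_{e \in E(G)} w(e)$. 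The hypothesis $\Bal(G) = 1$ says precisely that $\prod_{e \in E(G)} w(e) = a$, so both inequalities collapse to equalities: $|R| = a = \prod_{e \in E(G)} w(e)$.

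Next I would read off two separate consequences from these coincidences. From $|R| = \prod_{e} w(e)$: the summation map $\prod_{e \in E(G)} \Rem(e) \to R$ is a surjection between finite sets of equal cardinality, hence a bijection, so every element of $R$ has a \emph{unique} decomposition as a sum of one element from each $\Rem(e)$; that is, $R = \bigoplus_{e \in E(G)} \Rem(e)$. From $|R| = a$, together with the fact that $R$ already meets every residue class modulo $a$: the set $R$ contains \emph{exactly} one element of each residue class modulo $a$. This last observation is what makes the outer sum direct — if $a n_1 + x_1 = a n_2 + x_2$ with $n_i \in \N$ and $x_i \in R$, then $x_1 \equiv x_2 \pmod{a}$ forces $x_1 = x_2$, whence $n_1 = n_2$. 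Therefore $S = \langle a \rangle \oplus R = \langle a \rangle \oplus \bigoplus_{e \in E(G)} \Rem(e)$.

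Finally I would identify $R$ with the Apéry set to close the loop. From $S = \langle a \rangle \oplus R$ one checks directly that each $x \in R$ lies in $S$ (its $\langle a \rangle$-component being $0$) and that $x - a \notin S$ (otherwise $x - a = a n' + x'$ would produce a second representation $x = a(n'+1) + x'$ of $x$), so $R \subseteq \Ap(S, a)$ by definition $(4)$; since $R$ and $\Ap(S,a)$ each contain precisely one representative per residue class modulo $a$, they both have cardinality $a$ and hence coincide. Reconstituting $a = r(G)$, this is the claimed equality $\Ap(S, r(G)) = \bigoplus_{e \in E(G)} \Rem(e)$.

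I do not expect a deep obstacle here: once Proposition $3.1$ and the cardinality estimate underlying Lemma $3.2$ are in hand, everything is bookkeeping. The one place that demands care is keeping straight which cardinality coincidence does which job — $|R| = \prod_e w(e)$ yields directness of the \emph{inner} sum of remainder sets, while $|R| = a$ (plus surjectivity onto residue classes) yields directness of the \emph{outer} sum with $\langle r(G)\rangle$ — and remembering that the converse of relation $(5)$, namely that a finite set $T$ with $S = \langle a \rangle \oplus T$ must equal $\Ap(S,a)$, is not literally $(5)$ but needs the ``exactly one representative per class'' fact to pin down $T$.
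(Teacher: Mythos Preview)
Your proposal is correct and follows essentially the same route as the paper: both start from Proposition~3.1, use the chain of inequalities from Lemma~3.2 (your $|R| \geq a$ and $|R| \leq \prod_e w(e)$ are exactly the paper's (29) and the first inequality in (30)), collapse them under $\Bal(G)=1$, and separately extract directness of the inner sum (from $|R| = \prod_e w(e)$) and of the outer sum (from $R$ hitting each residue class modulo $a$ exactly once). The only cosmetic difference is the final identification of $R$ with $\Ap(S,a)$: the paper invokes the uniqueness of remainder sets (the Remark after Definition~2.1, via Lemma~1.1) applied to equation~(5), whereas you verify $R \subseteq \Ap(S,a)$ by hand and finish with the cardinality match --- but this is the same fact unpacked.
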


\begin{remark}
Graph-solvable semigroups extend a wide class of semigroups for which it is easy to compute Frobenius numbers, called \emph{free semigroups} \cite{Telescopic}. As we shall see later, free semigroups are graph-solvable using only two types of basic reductions (or only one if we make a simplification), so they have very simple formulations in terms of our graphical approach (see Figure $21$).
\end{remark}

\begin{proof}
Suppose $\Bal(G) = 1$, so equality is reached in equation $(30)$. Therefore, we must have:
\begin{equation}
\prod_{e \in E(G)} |\Rem(e)| = \left| \sum_{e \in E(G)} \Rem(e) \right|
\end{equation}

Since these are all finite sets, the equality above implies that no two $|E(G)|$-tuples from the Cartesian product $\prod_{e \in E(G)}$ \Rem(e) have the same sum. In other words, the sum of the remainder sets is a direct sum:
\begin{equation}
\sum_{e \in E(G)} \Rem(e) = \bigoplus_{e \in E(G)} \Rem(e)
\end{equation}

In this case, equation $(28)$ becomes:
\begin{equation}
S = \langle r(G) \rangle + \bigoplus_{e \in E(G)} \Rem(e)
\end{equation}

In order to reach equality in $(30)$, we must also have equality in $(29)$, which means that $\bigoplus_{e \in E(G)} \Rem(e)$ contains exactly one integer from each class modulo $r(G)$. So the sum in $(34)$ is a direct sum, thus the entire relation is in fact a reduction with remainder set $\bigoplus_{e \in E(G)} \Rem(e)$:
\begin{equation}
S = \langle r(G) \rangle \oplus \bigoplus_{e \in E(G)} \Rem(e)
\end{equation}

Taking $a = r(G)$ in equation $(5)$, we obtain the desired equation $(31)$.
\end{proof}

\begin{remark}
Our goal whenever we construct a reduction graph will be to reach the scenario from Theorem $3.1$ (i.e., final balance $1$). This explains why we defined reductions using direct sums rather than usual sums: if at any point in the construction of a reduction graph we used a reduction that did not leave behind a direct sum, the direct sum in equation $(35)$ could not be reached. A similar reasoning shows that any sub-reduction-graph (a subcollection of nodes and edges satisfying Definition $3.1$) of a total reduction graph needs to be total as well.
\end{remark}

\begin{example}
The reduction graphs from Figures $3$, $4$ and $7$ have balance
$1$, so they are total: $
\frac{9}{3 \q\cdot\q 3} = \frac{20}{4 \q\cdot\q 5} = \frac{a^n}{a^n} = \frac{ab}{a \q\cdot\q b} = 1
$.
In particular, Figure $3$ shows that the numerical semigroup $\langle 9, 12, 15, 20 \rangle$ is graph-solvable in two distinct ways, using the same nodes but different edges. Also, semigroups generated by geometric sequences are graph-solvable due to Figure $4$. To compute the Ap\'ery sets, Frobenius numbers and other attributes of graph-solvable semigroups, we have the next corollary:
\end{example}

\begin{corollary}
Under the hypothesis of Theorem $3.1$, we can find formulae for all the parameters of numerical semigroups that we defined in Section $1$:
\begin{align}
G(S) &= \N \setminus \left( \langle r(G) \rangle \q\oplus \bigoplus_{e \in E(G)} \Rem(e) \right) \\
F(S) &= - r(G) + \sum_{e \in E(G)} \max \left(\Rem(e)\right) \\
H_S(X) &= \frac{1}{1-X^{r(G)}} \prod_{e \in E(G)} \Gen(\Rem(e); X) \\
g(S) &= \lim_{x \to 1} \left(\frac{1}{1-x} - H_S(x)\right) = \frac{1-r(G)}{2} + \sum_{e \in E(G)} \mu(\Rem(e)),
\end{align}

where $\mu(\Rem(e))$ denotes the arithmetic mean of the elements in $\Rem(e)$. More generally, for any $k \geq 0$, if we denote by $\partial$ the derivation $\partial f(x) := xf'(x)$, we have:
\begin{equation}
S_k(S) = \lim_{x \to 1} \partial^k\left(\frac{1}{1-x} - H_S(x)\right)
\end{equation}
\end{corollary}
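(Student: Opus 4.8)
The plan is to derive everything from the two facts that Theorem~3.1 already puts on the table: the decomposition $S = \langle r(G) \rangle \oplus \bigoplus_{e \in E(G)} \Rem(e)$ (relation $(35)$) and the identity $\Ap(S, r(G)) = \bigoplus_{e \in E(G)} \Rem(e)$ (relation $(31)$). Write $r := r(G)$ throughout. Then $(36)$ is immediate: it is just the definition $G(S) = \N \setminus S$ from $(6)$, with $S$ rewritten via $(35)$. For $(37)$ I would start from $(7)$, which gives $F(S) = \max\bigl(\Ap(S, r)\bigr) - r$, and then use the elementary observation that if finitely many finite sets of nonnegative integers each contain $0$, the maximum of their direct sum is the sum of their maxima (choose the largest element from every $\Rem(e)$); hence $\max\bigl(\bigoplus_{e} \Rem(e)\bigr) = \sum_{e} \max(\Rem(e))$, which yields $(37)$.

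For the Hilbert series $(38)$ I would iterate Lemma~1.1. One small preliminary is needed: to apply the lemma one factor at a time over the finite collection $E(G)$, each partial direct sum must itself be a genuine direct sum, and this is automatic, since a repeated representation inside $\Rem(e_1) + \cdots + \Rem(e_m)$ with $m < |E(G)|$ would, after padding the remaining coordinates with $0$, give a repeated representation inside the full direct sum $\bigoplus_{e} \Rem(e)$, contradicting $(35)$. An easy induction on $|E(G)|$ using Lemma~1.1 then gives $\Gen\bigl(\bigoplus_e \Rem(e); X\bigr) = \prod_e \Gen(\Rem(e); X)$; combining with $(35)$ and $\Gen(\langle r \rangle; X) = \sum_{k \geq 0} X^{rk} = 1/(1 - X^r)$ (Lemma~1.1 works verbatim for the infinite set $\langle r \rangle$ as a manipulation of formal power series) produces $(38)$.

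The genus and power-sum formulae both ride on the identity $\tfrac{1}{1-x} - H_S(x) = \Gen(G(S); x) = \sum_{n \in G(S)} x^n$ from relation $(11)$. Because $G(S)$ is finite, this is an honest polynomial, so the two terms on the left blow up at $x = 1$ but their difference extends continuously there; taking $x \to 1$ simply evaluates the polynomial, giving $|G(S)| = g(S)$, which is the first equality in $(39)$, and applying the operator $\partial^k$ first — which sends each $x^n$ to $n^k x^n$ — before letting $x \to 1$ gives $\sum_{n \in G(S)} n^k = S_k(S)$, which is $(40)$. It then remains to extract the closed form $g(S) = \tfrac{1-r}{2} + \sum_e \mu(\Rem(e))$ in $(39)$. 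I would use the classical Apéry-set count: writing $\Ap(S, r) = \{w_0, \ldots, w_{r-1}\}$ with $w_i \equiv i \pmod r$, the gaps lying in residue class $i$ are exactly $i, i + r, \ldots, w_i - r$, so $g(S) = \sum_{i} \tfrac{w_i - i}{r} = \tfrac{1}{r}\sum_{w \in \Ap(S, r)} w - \tfrac{r-1}{2}$. Since $\Ap(S, r) = \bigoplus_e \Rem(e)$ and $\prod_e |\Rem(e)| = r$ (because $\Bal(G) = 1$), summing over the direct sum gives $\sum_{w \in \Ap(S, r)} w = \sum_e \bigl(\sum_{j \in \Rem(e)} j\bigr)\prod_{e' \neq e} |\Rem(e')| = r \sum_e \mu(\Rem(e))$, and substituting yields $(39)$. (Equivalently, one can differentiate $(38)$ at $x = 1$, using $\prod_e \Gen(\Rem(e); 1) = \prod_e w(e) = r$ and the fact that the derivative of $\Gen(\Rem(e); x)$ at $x = 1$ is $\sum_{j \in \Rem(e)} j$; this is the same computation packaged as an application of L'Hôpital.)

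The bulk of this is bookkeeping with direct sums and generating functions that the earlier sections have already set up; the only place that calls for a bit of care is the last step — the closed form of the genus in $(39)$ — where one must correctly account for the constant $\tfrac{1-r(G)}{2}$, either through the Apéry-set gap count or through the first-order behaviour of $\tfrac{1}{1-x} - H_S(x)$ near $x = 1$. Everything else follows routinely once $(31)$ and $(35)$ are in hand.
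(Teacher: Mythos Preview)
Your proof is correct and follows essentially the same path as the paper for $(36)$--$(38)$ and $(40)$. The only difference is in extracting the closed form of $g(S)$ in $(39)$: the paper computes the limit directly by L'H\^opital's rule applied to $(38)$ (exactly your parenthetical alternative), whereas your primary route is Selmer's Ap\'ery-set formula $g(S) = \tfrac{1}{r}\sum_{w \in \Ap(S,r)} w - \tfrac{r-1}{2}$ combined with the direct-sum identity for $\sum_w w$; both are standard and equally short.
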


\begin{proof}
The identity in $(36)$ follows immediately from $(34)$ and the definition of sets of gaps (see $(6)$). Relation $(37)$ is a direct consequence of $(7)$, using the simple fact that the maximum of a sum of sets is the sum of their maximums. Similarly, $(38)$ follows from $(34)$ and Lemma $1.1$.

To deduce the first equality in $(39)$ and more generally, $(40)$ (note that the case $k = 0$ gives $S_0(S) = g(S)$), we need the observation that for any finite set $A$ of nonnegative integers, one has:
\[
\begin{split}
\lim_{x \to 1} \partial^k\Gen(A; x) &= \lim_{x \to 1} \sum_{a \in A} \partial^k x^a \\
&= \lim_{x \to 1} \sum_{a \in A} a^k x^a = \sum_{a \in A} a^k
\end{split}
\]

Therefore, $(40)$ is a direct consequence of $(9)$ and $(11)$; we do not even need the conditions of Theorem $3.1$ to state it explicitly, but we need these conditions to compute $H_S(x)$. We note that we must take limits as $x \to 1$ rather than evaluate at $x = 1$ directly, because our expressions are rational functions with poles at $1$. It remains to prove the second equality in $(39)$; we do this using relation $(38)$, the hypothesis that $\Bal(G) = 1$, and L'H\^opital's rule, denoting $R_e(x) := \Gen(\Rem(e); x)$:
\begin{equation}
\begin{split}
\lim_{x \to 1} \left(\frac{1}{1-x} - H_S(x)\right) &= \lim_{x \to 1} \frac{1+x+\ldots+x^{r(G)-1} - \prod_{e \in E(G)} R_e(x)}{1-x^{r(G)}}
\\
&= \lim_{x \to 1} \frac{1 + 2x + \ldots + (r(G)-1)x^{r(G)-2} - \prod_{e \in E(G)} R_e(x) \sum_{e \in E(G)} \frac{R_e'(x)}{R_e(x)}}{-r(G)x^{r(G)-1}}
\\
&= \frac{1 + 2 + \ldots + (r(G) - 1) - \prod_{e \in E(G)} w(e) \sum_{e \in E(G)} \mu(\Rem(e))}{-r(G)}
\\
&= \frac{1 - r(G)}{2} + \sum_{e \in E(G)} \mu(\Rem(e))
\end{split}
\end{equation}

More involved computations can of course be used to simplify the RHS of $(40)$, for any $k \geq 1$.
\end{proof}

\begin{corollary}
A numerical semigroup $S$, graph-solvable via $G$, is symmetric, respectively pseudo-symmetric, if and only if the sum $\sum_{e \in E(G)} \left(2\mu(\Rem(e)) - \max(\Rem(e))\right)$ equals $0$, respectively $1$. In these cases, we say that $G$ itself is symmetric, respectively pseudo-symmetric.
\end{corollary}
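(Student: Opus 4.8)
The plan is to reduce everything to the two closed forms supplied by Corollary $3.1$ together with the classical criteria for (pseudo-)symmetry recalled in item $4$ of Definition $1.1$. Since $S$ is graph-solvable via $G$, the hypothesis of Theorem $3.1$ is in force (in particular $\Bal(G) = 1$), so Corollary $3.1$ applies and gives simultaneously
\[
F(S) = -r(G) + \sum_{e \in E(G)} \max\left(\Rem(e)\right), \qquad g(S) = \frac{1-r(G)}{2} + \sum_{e \in E(G)} \mu\left(\Rem(e)\right),
\]
where all the sets $\Rem(e)$ are finite, nonempty and contain $0$, so that $\max$ and $\mu$ are well-defined.

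First I would form the combination $2g(S) - F(S) - 1$ and substitute the two formulas above. The terms involving $r(G)$ cancel exactly, $2 \cdot \tfrac{1-r(G)}{2} = 1 - r(G)$ against $-(-r(G)) - 1 = r(G) - 1$, leaving
\[
2g(S) - F(S) - 1 = \sum_{e \in E(G)} \left(2\mu\left(\Rem(e)\right) - \max\left(\Rem(e)\right)\right).
\]
In particular the right-hand side, despite its form, is forced to be an integer, being equal to $2g(S) - F(S) - 1$. Then I would invoke the known equivalences from item $4$: $S$ is symmetric iff $g(S) = \tfrac{F(S)+1}{2}$, i.e.\ iff $2g(S) - F(S) - 1 = 0$; and $S$ is pseudo-symmetric iff $g(S) = \tfrac{F(S)+2}{2}$, i.e.\ iff $2g(S) - F(S) - 1 = 1$. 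Combining these with the displayed identity immediately yields the stated criterion, and the remark that $G$ itself is then called (pseudo-)symmetric is just terminology.

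There is essentially no obstacle here beyond bookkeeping; the only point requiring a line of care is confirming that the hypotheses of Corollary $3.1$ hold, which is exactly what "graph-solvable via $G$" encodes, so that both closed forms may be substituted at once. One might additionally remark, for emphasis, that for any numerical semigroup $2g(S) - F(S) - 1 \geq 0$, so $\sum_{e \in E(G)}\left(2\mu(\Rem(e)) - \max(\Rem(e))\right)$ is a nonnegative integer whose two smallest possible values are precisely the ones singling out the symmetric and pseudo-symmetric cases.
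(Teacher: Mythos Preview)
Your proposal is correct and follows essentially the same route as the paper, which simply remarks that the corollary ``follows directly from $(37)$ and $(39)$'' together with the (pseudo-)symmetry criteria from item~4 of Definition~1.1. Your computation of $2g(S)-F(S)-1$ and the accompanying remarks about integrality and nonnegativity are accurate elaborations of that one-line justification.
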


This corollary, which follows directly from $(37)$ and $(39)$, motivates the following definition:

\begin{definition}[Asymmetry]
We say that a reduction (edge) $e$ with remainder set $R$ has \emph{asymmetry} $A(e) := 2\mu(R) - \max(R)$, and that it is \emph{symmetric} or \emph{pseudo-symmetric} iff it has assymetry $0$, respectively $1$. The assymetry $A(G)$ of a reduction graph $G$ is defined as the sum of the assymetries of its edges, while the asymmetry of a numerical semigroup $S$ is defined as $A(S) := 2g(S) - F(S) - 1$.
\end{definition}

In light of $(37)$ and $(39)$, the asymmetry of a numerical semigroup equals the asymmetry of any reduction graph that describes it. In particular, a numerical semigroup that is graph-solvable using only symmetric edges is symmetric. Similarly, a numerical semigroup that is graph-solvable using only symmetric edges except for one pseudo-symmetric edge is pseudo-symmetric.

\begin{remark}
Computing the expressions from Corollary $3.1$ (especially the last one) can get quite convoluted, so we implemented a MAPLE program to help. All that the user needs to provide is a concise description of the edges of the reduction graph; we note that the program only works for graphs that use a fixed number of edges. Details are provided in the next implementation segment:
\end{remark}

\begin{implementation}
In the MAPLE script available at \cite{Program}, below the comment containing the phrase "LIST OF REDUCTIONS USED", the reader should write a representation of each edge used in their \emph{total} reduction graph of a numerical semigroup. Some edges (corresponding to the linear reductions, as we will see in the next section) need not be specified.

The representation of each edge should have the following structure:
\[
\text{Type(} \text{parameter}_1, \text{parameter}_2, \q\ldots\text{):}
\]

The possible types of edges and the parameters they require will be detailed in the next section. The script will print what the root of the reduction graph should be (for purposes of verification), followed by the Frobenius number, genus, assymetry and Hilbert series of the numerical semigroup. The reader also has the option to specify the value of a nonnegative integer $k$ towards the end of the script, which will result in computing the gaps' power sums $S_1, \ldots, S_k$. By default, we have set $k = 0$, since the computation of gaps' power sums may be very time-consuming.

We note that our program does not verify whether the reduction graph described by the user is well-defined (i.e., that it satisfies the conditions from Definition $3.1$); it cannot do so, because it has very limited information about the nodes of the graph, and incomplete information about its edges. The output of the program given inadequate input may vary from wrong results to runtime errors (e.g., division by $0$).
\end{implementation}

\begin{example}
For reasons to be explained in more detail in the next section, the edges of the reduction graph from Figure $7$ should be listed in our MAPLE script \cite{Program} in the following format:
\begin{align*}
&\text{Binary(a*b, b*c):} \\
&\text{Arithmetic(a*b, a*c - a*b, 2):}
\end{align*}

We note that only two instructions are needed because the third reduction (with input node $\langle ab + bc + ca \rangle$ and output nodes $\langle ab \rangle$, $\langle bc \rangle$, $\langle ca \rangle$) has a trivial remainder set of $\langle 0 \rangle$, which does not affect the overall Ap\'ery set given by Theorem $3.1$. This does not mean that the reduction is useless: its purpose is to connect the node $\langle ab + bc + ca \rangle$ to the rest of the graph.

Given the instructions above, the program will assume that the variables $a, b, c$ are pairwise relatively prime positive integers; any common divisor should be explicitly mentioned, e.g., by replacing $a$ with $ad$ and $b$ with $bd$. Running the script with this input produces the output below:
\begin{center}
\includegraphics[scale = 0.55]{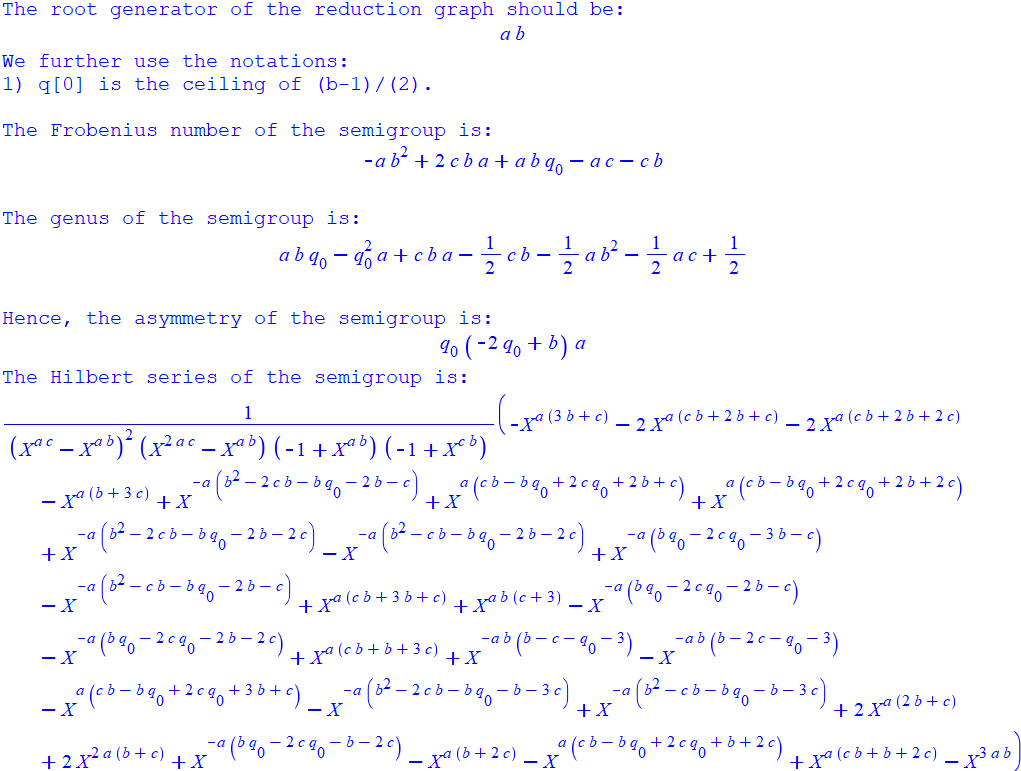}
\end{center}
\end{example}

\begin{remark}
When we characterize a numerical semigroup using a reduction graph, the decomposition of the initial Ap\'ery reduction into smaller and simpler edges must eventually come to a stop. The reductions that are simple enough that we cannot (or choose not to) decompose them into smaller edges are called \emph{basic reductions}, studied primarily in the next section and further developed in Section $5$. These include the reductions that our MAPLE script supports within its list of edges.
\end{remark}

\section{Basic Reductions and Linear Exchanges}

This section is dedicated to discovering the building bricks of our graphs: basic reductions, while the next sections will focus on transforming and combining these basic reductions to construct bigger reduction graphs. Of course, there is a trade-off between the complexity of the edges used in a reduction graph and the complexity of the graph's structure itself. Normally, if a reduction is known to hold and has a simple-to-describe remainder set, we might as well use it as a basic reduction to simplify our reasonings.

As a general intuition, a reduction graph with a complex structure indicates the existence of some \emph{multiplicative} relationship between the generators of the studied semigroup (e.g., geometric sequences). If the generators are instead related via \emph{additive} properties (e.g., arithmetic sequences), it is more likely that a basic reduction will be more useful. Hence when looking for basic reductions, it is preferred to develop techniques that exploit additive relationships between nodes. The technique that we developed to this purpose is based on the following definition, and a little notation from linear algebra:

\begin{definition}[Linear Exchange]
Suppose $\ba = (a_1, \ldots, a_n)$ is a fixed vector of integers (in the terminology of the Coin problem, these could be our coin denominations), and $X$ is a subset of $\mathbb{Z}^n$ (which would contain the possible vectors of coin frequencies). A \emph{linear exchange} applied to a vector $\bx \in X$ in terms of $\ba$ is a substitution $\bx \to \bx + \bu$, where $\bu$ is a vector such that $\ba^T\bu = \boldsymbol{0}$, and $\bx + \bu \in X$. Note that this substitution preserves the value of $\ba^T \bx$.
\end{definition}

\begin{remark}
The vectors $\bu$ that may lead to linear exchanges in terms of the vector $\ba$ lie in the kernel of the map $\ba^T : \mathbb{Z}^n \to \mathbb{Z}$.
\end{remark}

\begin{example}
Suppose that we want to study the numerical semigroup $\langle 4, 5, 6 \rangle$. We will take $X := \N^3$ as our space of possible coin frequences (since $\langle 4, 5, 6 \rangle = \{4x + 5y + 6z : (x, y, z) \in \N^3 \}$), and $\ba := (4, 5, 6)$ as our vector of semigroup generators (i.e. coin denominations). Searching for helpful linear exchanges, we note that $4 \cdot 1 + 5 \cdot (-2) + 6 \cdot 1 = 0$ and $4 \cdot 3 + 6 \cdot (-2) = 0$. Therefore, the vectors $\bu = (1, -2, 1)$ and $\bu' = (3, 0, -2)$ lead to linear exchanges for some vectors in $X$. We will soon see how these observations can lead to a reduction graph for the aforementioned semigroup.
\end{example}

\begin{remark}
The composition of more linear exchanges applied to a vector $\bx$ is still a linear exchange applied to $\bx$. The name of this process comes from the fact that in the substitution $\bx \to \bx + \bu$, some of the entries of $\bx$ increase and some decrease, but the overall weighted gain (where the weights are the entries of $\ba$) is zero - as in an exchange of currency between different coin denominations. We can manipulate this idea to produce basic reductions, using the following lemma:
\end{remark}

\begin{lemma}[Linear Exchange]
Fix a vector $\ba = (a_1, \ldots, a_n)$ of integers and a set $X \subset \mathbb{Z}^n$. Let $X'$ be a set obtained from $X$ by applying some linear exchange $\bx \to \bx + \bu_{\bx}$ to each vector $\bx \in X$; note that by Definition $4.1$, $X' \subset X$. Then, we have the following equality of sets:
\begin{equation}
\ba^TX := \{ \ba^T\bx : \bx \in X \} = \{ \ba^T\bx : \bx \in X' \} = \ba^TX'
\end{equation}

In other words, when we consider all vectors of the form $a_1 x_1 + \ldots + a_n x_n$ with $(x_1, \ldots, x_n) \in X$, we may assume that $(x_1, \ldots, x_n)$ is restricted to the subset $X'$. In practice, $X'$ will be much smaller than $X$, which will help us construct basic reductions.
\end{lemma}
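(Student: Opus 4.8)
The plan is to prove the equality $\ba^TX = \ba^TX'$ by double inclusion, with essentially all the content sitting in the definition of a linear exchange. The ``$\supseteq$'' direction is immediate: by Definition $4.1$, every linear exchange $\bx \to \bx + \bu_{\bx}$ satisfies $\bx + \bu_{\bx} \in X$, so $X' \subseteq X$ as noted in the statement, and therefore $\ba^TX' = \{\ba^T\bx : \bx \in X'\} \subseteq \{\ba^T\bx : \bx \in X\} = \ba^TX$.

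For the ``$\subseteq$'' direction I would fix an arbitrary element $v \in \ba^TX$, write $v = \ba^T\bx$ for some $\bx \in X$, and consider the vector $\bx' := \bx + \bu_{\bx}$, which by construction lies in $X'$. The defining property $\ba^T\bu_{\bx} = \boldsymbol{0}$ then gives
\begin{equation*}
v = \ba^T\bx = \ba^T\bx + \ba^T\bu_{\bx} = \ba^T(\bx + \bu_{\bx}) = \ba^T\bx',
\end{equation*}
so $v \in \ba^TX'$. Since $v$ was arbitrary, $\ba^TX \subseteq \ba^TX'$, and combining the two inclusions yields the claimed chain of equalities.

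The one point that deserves care — and the closest thing to an ``obstacle'' — is that the assignment $\bx \mapsto \bx + \bu_{\bx}$ need not be injective, so $X'$ may be strictly smaller than $X$ as a set and we cannot phrase the argument as a literal change of variables inside a sum or as a bijection of index sets. The correct viewpoint, which I would make explicit, is that we are only comparing the \emph{images} (value sets) of $\ba^T$ on $X$ and on $X'$: the argument above shows that every value attained on $X$ is already attained on the (possibly much smaller) set $X'$, and no new values are introduced because $X' \subseteq X$. This is exactly what makes the lemma useful for building basic reductions, since it lets us replace $X$ by a more tractable $X'$ without changing the semigroup-type set $\ba^TX$.
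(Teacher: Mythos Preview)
Your proof is correct and follows essentially the same approach as the paper: both arguments rest entirely on the defining property $\ba^T\bu_{\bx}=0$, which immediately gives $\ba^T(\bx+\bu_{\bx})=\ba^T\bx$. The paper simply compresses your double inclusion into the single chain $\{\ba^T\bx:\bx\in X'\}=\{\ba^T(\bx+\bu_{\bx}):\bx\in X\}=\{\ba^T\bx:\bx\in X\}$, but the content is identical.
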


\begin{proof}
The proof of this lemma follows immediately by definition:
\begin{equation}
\{ \ba^T \bx : \bx \in X' \} = \{ \ba^T (\bx + \bu_{\bx}) : \bx \in X \} = \{ \ba^T \bx : \bx \in X \}
\end{equation}
\end{proof}

\begin{example}
In continuation of Example $3.1$, let $X' := \N \times \{0, 1\} \times \N$. Note that for any $\bx \in X = \N^3$, we can apply the linear exchange $\bx \to \bx + (1, -2, 1)$ repeatedly, until $\bx \in X'$. Since the composition of several linear exchanges is a linear exchange, we have reached the scenario from Lemma $4.1$, so:
\begin{equation}
\begin{split}
\langle 4 \rangle + \langle 5 \rangle + \langle 6 \rangle &= \{ 4x_1 + 5x_2 + 6x_3 : x_1, x_2, x_3 \in \N\} \\
&= \{ 4x_1 + 5x_2 + 6x_3 : x_1, x_3 \in \N, x_2 \in \{0, 1\}\} \\
&= \left(\langle 4 \rangle + \langle 6 \rangle\right) + \{0, 5\}
\end{split}
\end{equation}

Judging based on parity, we can observe a direct sum in the RHS, which leads to the reduction $\left(\langle 4 \rangle + \langle 6 \rangle\right) + \langle 5 \rangle = \left(\langle 4 \rangle + \langle 6 \rangle\right) \oplus \{0, 5\}$ with remainder set $\{0, 5\}$ and weight $2$. Now that we have reduced the monogenic semigroup $\langle 5 \rangle$ with respect to $\langle 4 \rangle + \langle 6 \rangle$, we can focus on the remaining sum $\langle 4 \rangle + \langle 6 \rangle$. We take $X := \N^2$, $\ba = (4, 6)$, and observe the linear exchange given by $\bu = (3, -2)$. Letting $X' := \N \times \{0, 1\}$, we can apply the linear exchange $\bx \to \bx + (3, -2)$ to any vector $\bx \in X$ repeatedly, until $\bx \in X'$. As in $(44)$, this leads to the reduction $\langle 4 \rangle + \langle 6 \rangle = \langle 4 \rangle \oplus \{0, 6\}$ with remainder set $\{0, 6\}$ and weight $2$ (the direct sum follows considering residues modulo $4$). Based on the two basic reductions we have found, we may already build a total reduction graph:

\begin{figure}[h]
\centering
\includegraphics[scale = 0.9]{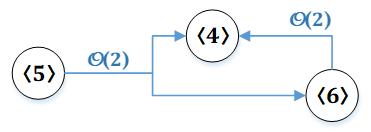}
\caption{Example of a Total Reduction Graph}
\FloatBarrier
\end{figure}

The root generator of this graph is $4$, so the Ap\'ery set $\Ap(\langle 4, 5, 6 \rangle, 4)$ should equal the sum of our remainder sets, i.e. $\{0, 5\} + \{0, 6\} = \{0, 5, 6, 11\}$. Surely, the semigroup generating system from this example is a particular case of an arithmetic progression; the more general case will be studied shortly.
\end{example}

Since applying linear exchanges from scratch can get a little tedious, we further provide a few corollaries of Lemma $4.1$, which we can use directly as building blocks of our reduction graphs:

\begin{corollary}[Linear Reduction]
Suppose that $a_1, \ldots, a_k, b$ are positive integers such that $b$ can be written as a linear combination of $a_1, \ldots, a_k$ with nonnegative integer coefficients. Then:
\begin{equation}
\langle a_1 \rangle + \ldots + \langle a_k \rangle + \langle b \rangle = \langle a_1 \rangle + \ldots + \langle a_k \rangle
\end{equation}

This is a reduction of the semigroup $\langle b \rangle$ with respect to $\langle a_1, \ldots, a_k\rangle$, with remainder set $\{0\}$, and hence \emph{symmetric} (note that $2\mu(\{0\}) - \max(\{0\}) = 0$). We could rewrite this equation as $\langle a_1, \ldots, a_k, b \rangle = \langle a_1, \ldots, a_k \rangle$, but the sums of monogenic semigroups are more closely related to the reduction graph representation. We illustrate this so-called \emph{linear reduction} as the edge below:

\begin{figure}[h]
\centering
\includegraphics[scale = 0.9]{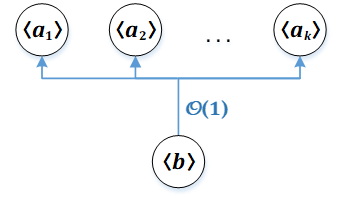}
\caption{Linear Reduction Edge}
\end{figure}
\FloatBarrier
\end{corollary}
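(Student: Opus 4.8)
The plan is to derive this directly from the Linear Exchange Lemma (Lemma $4.1$), since the hypothesis that $b$ is a nonnegative integer combination of the $a_i$ is exactly what produces a kernel vector of the right shape. First I would fix a representation $b = c_1 a_1 + \ldots + c_k a_k$ with each $c_i \in \N$, and set $\ba := (a_1, \ldots, a_k, b) \in \mathbb{Z}^{k+1}$ together with $X := \N^{k+1}$. Then by definition $\langle a_1 \rangle + \ldots + \langle a_k \rangle + \langle b \rangle = \{\ba^T \bx : \bx \in X\}$, which is the object we want to simplify.

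Next I would introduce the vector $\bu := (c_1, \ldots, c_k, -1)$, which satisfies $\ba^T \bu = c_1 a_1 + \ldots + c_k a_k - b = 0$, so it is a legitimate linear-exchange direction in terms of $\ba$. Given any $\bx = (x_1, \ldots, x_k, x_{k+1}) \in X$, applying the substitution $\bx \to \bx + \bu$ exactly $x_{k+1}$ times yields the vector $(x_1 + c_1 x_{k+1}, \ldots, x_k + c_k x_{k+1}, 0)$, whose entries are all nonnegative; since every intermediate vector also has nonnegative entries, this is a valid composition of linear exchanges, and it lands in $X' := \N^k \times \{0\}$. Lemma $4.1$ then gives $\{\ba^T \bx : \bx \in X\} = \{\ba^T \bx : \bx \in X'\}$, and the right-hand side is visibly $\langle a_1 \rangle + \ldots + \langle a_k \rangle$ (the last coordinate being $0$ just drops the $\langle b \rangle$ summand), which establishes $(45)$.

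Finally I would observe that $(45)$ is literally a reduction in the sense of Definition $2.1$: with $A := \langle a_1 \rangle + \ldots + \langle a_k \rangle$ and $B := \langle b \rangle$, we have shown $A + B = A$, and $A = A + \{0\} = A \oplus \{0\}$ trivially, since the decomposition $a = a + 0$ with $a \in A$ and $0 \in \{0\}$ is unique. Hence the remainder set is $R = \{0\}$, the weight is $w = |R| = 1$, and the asymmetry is $2\mu(\{0\}) - \max(\{0\}) = 0$, so the edge is symmetric as claimed. I do not expect a genuine obstacle here; the only point requiring a line of care is checking that iterating $\bx \to \bx + \bu$ stays inside $X = \N^{k+1}$, which holds because the first $k$ coordinates only increase (the $c_i$ being nonnegative) while the last coordinate decreases by one at each step from a nonnegative starting value, so it reaches $0$ without ever going negative.
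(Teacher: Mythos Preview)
Your proof is correct and follows essentially the same route as the paper: set $\ba=(a_1,\ldots,a_k,b)$, use the kernel vector $(c_1,\ldots,c_k,-1)$ to drive the last coordinate to zero, and invoke Lemma~4.1 to land in $\N^k\times\{0\}$. The paper even remarks that this iterated single-step formulation is the ``more natural way to phrase this process,'' so there is no substantive difference.
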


\begin{proof}
Although this corollary is trivial, its proof serves as a good illustration of linear exchanges, whose applications will soon become more complicated. Write $\ba := (a_1, \ldots, a_k, b)$ and pick some $u_1, \ldots, u_k \geq 0$ such that $b = a_1 u_1 + \ldots + a_k u_k$. For each $\bx \in \N^{k+1}$, consider the vector:
\begin{equation}
\bu_{\bx} := (x_{k+1}u_1, \ldots, x_{k+1}u_k, -x_{k+1}),
\end{equation}

so that $\bx + \bu_{\bx}$ has the last entry equal to $0$. Then by Lemma $4.1$ and the linear exchanges $\bx \to \bx + \bu_{\bx}$ (note that $\ba^T \bu_{\bx} = 0$), we can assume that the last entry of $\bx$ is zero in the sense that:
\begin{equation}
\begin{split}
\langle a_1 \rangle + \ldots + \langle a_k \rangle + \langle b \rangle &= \{ \ba^T\bx : \bx \in \N^{k+1} \} \\
&= \{ \ba^T\bx : \bx \in \N^{k} \times \{0\} \} \\
&= \langle a_1 \rangle + \ldots + \langle a_k \rangle
\end{split}
\end{equation}

A more natural way to phrase this process is to consider only one linear exchange given by $\bu = (u_1, \ldots, u_k, -1)$; we can apply this exchange to each vector $\bx \in \N^{k+1}$ until the last entry of $\bx$ becomes $0$, which leads to the same judgment as in $(47)$.
\end{proof}

\begin{implementation}
Since linear reductions have trivial remainder sets, one need not specify them within the list of edges of the MAPLE script \cite{Program}.
\end{implementation}

\begin{remark}
Within the minimal system of generators of a semigroup, no generator $b$ can be written as a linear combination of other generators $a_1, \ldots, a_k$ (because otherwise the system could be made smaller by eliminating the generator $b$). Since we can always choose a minimal system of generators to describe a numerical semigroup, one may wonder whether linear reductions are of any good in practice. Here are two situations where these apparently trivial reductions play an essential role:

\begin{enumerate}
\item We may split a node $\langle b \rangle$ into a (direct) sum of two nodes:
\begin{equation}
\langle b \rangle = \langle ab \rangle \oplus \{ar : 0 \leq r < b\}
\end{equation}

While $b$ is probably not expressible as a linear combination of the other selected semigroup generators, $ab$ might be; this will allow us to reduce the node $\langle ab \rangle$ through a linear reduction, and further focus only on the leftover node $\{ar : 0 \leq r < b\}$. An application of this technique occurs for Fibonacci triplets (see Figure $11$).

\item It will sometimes be helpful to artificially add new monogenic semigroups to the list of nodes in a reduction graph, such that the new generators are linear combinations of the old generators. These artificial nodes (see Subsection $5.2$) will serve as bridges between the initial nodes of the graph, making use of linear reductions.
\end{enumerate}
\end{remark}

We now move on to another corollary of the linear exchange lemma:

\begin{corollary}[Residue Reduction]
Let $a > 0$ and $b$ be relatively prime integers (we allow $b < 0$). Consider a function $f: \N \to \N$ such that for all integers $0 \leq r < a$ and $q \geq 1$, one has:
\begin{equation}
f(aq + r) \geq f(r) - bq
\end{equation}

While this may seem like an arbitrary condition, it is a very common property for functions that show up in the Frobenius problem (e.g., any nondecreasing function works). Then we have the following reduction:
\begin{equation}
\begin{split}
\langle a \rangle + \{af(y) + by : y \geq 0\} &= \langle a \rangle \oplus \{af(r) + br : 0 \leq r < a \} \\
&= \langle a \rangle \oplus \bigO(a)
\end{split}
\end{equation}

This is called a \emph{residue reduction} modulo $a$; in particular, it is an Ap\'ery reduction, in the sense that its remainder set $\{f(r) : 0 \leq r < a \}$ is an Ap\'ery set with respect to $a$. Graphically, we can represent this reduction as the following edge:
\begin{figure}[h]
\centering
\includegraphics[scale = 0.9]{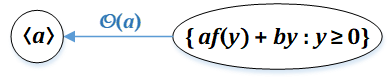}
\caption{Residue Reduction Edge}
\end{figure}
\FloatBarrier
\end{corollary}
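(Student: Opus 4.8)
The plan is to obtain $(51)$ as a direct specialization of the Linear Exchange Lemma (Lemma $4.1$). Take the weight vector $\ba := (a, b)$; its kernel in $\mathbb{Z}^2$ is spanned by $(b, -a)$ (using $\gcd(a,b) = 1$), so a linear exchange in the sense of Definition $4.1$ amounts to adding an integer multiple of $(b,-a)$ while remaining in the chosen index set. First I would rewrite the two sums in $(51)$ as images $\ba^T X$: since $\langle a \rangle + \{af(y) + by : y \ge 0\} = \{a(x + f(y)) + by : x, y \ge 0\}$, the substitution $z := x + f(y)$ identifies this set with $\ba^T X$ for $X := \{(z, y) \in \mathbb{Z}^2 : y \ge 0,\ z \ge f(y)\}$ (note $z \ge f(y) \ge 0$ automatically, matching the constraint of nonnegative coin frequencies); likewise $\langle a \rangle + \{af(r) + br : 0 \le r < a\} = \ba^T X'$ for $X' := \{(z, r) : 0 \le r < a,\ z \ge f(r)\}$.

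Next I would exhibit linear exchanges carrying $X$ onto $X'$, which is exactly what Lemma $4.1$ needs. Given $(z, y) \in X$, write $y = aq + r$ by Euclidean division, with $q \ge 0$ and $0 \le r < a$, and apply the single exchange with displacement $\bu_{(z,y)} := q\,(b, -a) \in \ker \ba^T$, arriving at $(z + qb,\ r)$. This vector lies in $X$: its second coordinate is in $\{0, \ldots, a-1\}$, and its first coordinate satisfies $z + qb \ge f(aq+r) + qb \ge f(r)$, where the first inequality is $(z,y) \in X$ and the second is hypothesis $(50)$ rearranged (trivial when $q = 0$); this is the one place $(50)$ enters, and it handles negative $b$ uniformly, since in that case $f$ must grow fast enough to keep $z + qb \ge 0$. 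Conversely every $(z, r) \in X'$ is already reached with $q = 0$, so these exchanges send $X$ exactly onto $X'$, and Lemma $4.1$ yields $\ba^T X = \ba^T X'$, i.e. the plain-sum identity $\langle a \rangle + \{af(y) + by : y \ge 0\} = \langle a \rangle + \{af(r) + br : 0 \le r < a\}$. One can equally run this argument without the lemma, by dividing $y$ by $a$ and then applying $(50)$, obtaining the two inclusions by hand.

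Finally I would promote the sum to a direct sum, read off the weight, and identify the remainder set as an Apéry set. Set $R := \{af(r) + br : 0 \le r < a\}$. Since $af(r) + br \equiv br \pmod a$ and $\gcd(a,b) = 1$, the map $r \mapsto br \bmod a$ is a bijection of $\mathbb{Z}/a\mathbb{Z}$, so $|R| = a$ and $R$ meets each residue class modulo $a$ exactly once; hence a coincidence $ax_1 + \rho_1 = ax_2 + \rho_2$ with $x_i \ge 0$ and $\rho_i \in R$ forces $\rho_1 \equiv \rho_2 \pmod a$, so $\rho_1 = \rho_2$ and $x_1 = x_2$ — the sum $\langle a \rangle + R$ is direct with weight $w = |R| = a$, matching the $\bigO(a)$ label, and $0 \in R$ under the implicit normalization $f(0) = 0$ that makes the sets fit Definition $2.1$. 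Writing $S := \langle a \rangle \oplus R$, the only elements of $S$ congruent to $\rho = af(r) + br$ modulo $a$ are $\{ax + \rho : x \ge 0\}$, whose least element is $\rho$ itself; by the description of Apéry sets recalled after $(5)$, this is precisely why $R$ is "an Apéry set with respect to $a$" (literally $\Ap(S,a)$ when $S$ happens to be numerical). I do not anticipate a real obstacle here: the only step needing genuine care is the choice of the index set $X$ together with the verification that $(50)$ is exactly the inequality keeping the reduced vector $(z+qb, r)$ inside $X$; everything afterward is bookkeeping modulo $a$.
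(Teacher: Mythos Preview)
Your argument is correct and follows essentially the same route as the paper: both apply Lemma~4.1 by reducing $y$ modulo $a$ via a linear exchange, use hypothesis~(50) to verify that the resulting vector stays in the index set, and then obtain the direct sum from $\gcd(a,b)=1$. The only difference is cosmetic: the paper uses the three-dimensional vector $\ba=(a,a,b)$ with $X=\{(x,f(y),y):x,y\in\N\}$ and tracks $f(y)$ as a separate coordinate, whereas you collapse this to two dimensions via $z=x+f(y)$, which makes the exchange vector $q(b,-a)$ slightly cleaner.
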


\begin{remark}
We already knew that the LHS of $(50)$ was the direct sum between $\langle a \rangle$ and a set of cardinality $a$, due to Ap\'ery sets: recall relation $(5)$. The novelty here is that we can completely describe the Ap\'ery set $\{af(r) + br : 0 \leq r < a \}$ up to the nature of $f$ (in particular, we can probably find a closed form for its generating function and incorporate it into our MAPLE script, if $f$ is not too complicated). Hence it makes sense to use residue reductions as basic reductions.

In practice, we may not be so lucky to obtain a node of the form $\{af(y) + by : y \geq 0\}$ where $f$ has the property from $(49)$, but we may rather encounter a number of nodes whose sum is of this form. In that case, the reduction edge from Figure $10$ would have a split tail, similar to the one from Figure $3$. Such an edge can be found in Figure $11$.
\end{remark}

\begin{proof}
We note that the RHS of $(50)$ is a direct sum by considering residues modulo $a$ (using that $a$ and $b$ are relatively prime). Hence it suffices to show $(50)$ when the direct sum in the RHS is replaced with a usual sum. Define $\ba := (a, a, b)$, then $X := \{(x, f(y), y) : x, y \in \N\}$ and $X' := \{(x, f(r), r) : x \in \N, 0 \leq r < a\}$. With these notations, our claim in $(50)$ becomes:
\begin{equation}
\ba^T X = \ba^T X'
\end{equation}

By Lemma $4.1$, it suffices to show that for any $(x, f(y), y) \in X$, there is a linear exchange that maps it to a vector in $X'$. Fix such an $\bx := (x, f(y), y)$, and let $y = aq + r$ with $0 \leq r < a$. If $q = 0$, then $\bx$ is already in $X'$, so assume $q \geq 1$. Consider the linear exchange:
\begin{equation}
\begin{split}
\bx \to \bx + (&bq + f(y) - f(r), f(r) - f(y), -aq) \\
= (x +\q &bq + f(y) - f(r), f(r), r)
\end{split}
\end{equation}

Indeed, one can see that $\ba^T(bq + f(y) - f(r), f(r) - f(y), -aq) = 0$, so this is a valid linear exchange in terms of $\ba$ provided that the RHS of $(52)$ belongs to $X'$. By the definition of $X'$, the only thing we need to check is that $x + bq + f(y) - f(r) \geq 0$, which is true since $x \geq 0$ and $f(y) \geq f(r) - bq$ (this is where we use $(49)$). Our proof is now complete.
\end{proof}

\begin{remark}
Relations like $(50)$ are abundant in literature (although other authors may use different notation), especially in computing Ap\'ery sets with respect to some $a$ by studying each residue class modulo $a$ separately \cite{Frobenius, Arithm, Modified, Fibo}. Let us look at the simplest application of Corollary $4.2$:
\end{remark}

\begin{corollary}[Binary Reduction]
Given relatively prime positive integers $a$ and $b$, one has:
\begin{equation}
\begin{split}
\langle a \rangle + \langle b \rangle &= \langle a \rangle \oplus \{0, b, 2b, \ldots, (a-1)b\} \\
&= \langle a \rangle \oplus \bigO(a)
\end{split}
\end{equation}

This is a reduction with remainder set $\{0, b, 2b, \ldots, (a-1)b\}$, called a \emph{binary reduction}, whose weight is $a$. We note that we anticipated this reduction in Figure $1$ (right) as well as in equation $(15)$. It is easily checked that binary reductions are \emph{symmetric}, since $2(b + \ldots + (a-1)b) = a(a-1)b$.
\end{corollary}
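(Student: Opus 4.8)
The plan is to obtain this as the simplest special case of the residue reduction (Corollary 4.2). First I would apply Corollary 4.2 with $a$ and $b$ as given (both positive, so in particular $b \geq 1 > 0$, and $\gcd(a,b)=1$) and with the \emph{constant} function $f \equiv 0$. Hypothesis $(49)$ then reads $0 \geq 0 - bq$ for all $0 \leq r < a$ and $q \geq 1$, which is immediate since $b>0$ and $q \geq 0$. As $\{af(y)+by : y \geq 0\} = \{by : y \geq 0\} = \langle b \rangle$ and $\{af(r)+br : 0 \leq r < a\} = \{0, b, 2b, \ldots, (a-1)b\}$, relation $(50)$ yields exactly
\[
\langle a \rangle + \langle b \rangle = \langle a \rangle \oplus \{0, b, 2b, \ldots, (a-1)b\},
\]
with weight equal to the cardinality of the remainder set, namely $a$. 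The direct sum is justified (as in the proof of Corollary 4.2) by the fact that the integers $0, b, 2b, \ldots, (a-1)b$ lie in pairwise distinct residue classes modulo $a$, since $\gcd(a,b)=1$.

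Alternatively, I would give the self-contained argument already sketched around equation $(15)$ and in Example 2.1: given any $ax+by \in \langle a \rangle + \langle b \rangle$ with $x,y \in \N$, write $y = aq+r$ with $0 \leq r < a$; then $ax+by = a(x+bq)+br \in \langle a \rangle + \{0, b, \ldots, (a-1)b\}$, and the reverse inclusion $\langle a \rangle + \{0, b, \ldots, (a-1)b\} \subseteq \langle a \rangle + \langle b \rangle$ is trivial. This proves the equality of the underlying sets, and distinctness of the residues modulo $a$ again upgrades it to a direct sum. Either route reaches $(54)$.

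Finally, for the symmetry claim I would compute the asymmetry of $R := \{0, b, \ldots, (a-1)b\}$ directly from Definition 3.3. Here $\max(R) = (a-1)b$ and $\mu(R) = \tfrac{1}{a}\sum_{r=0}^{a-1} br = \tfrac{b(a-1)}{2}$, so $A(e) = 2\mu(R) - \max(R) = b(a-1) - (a-1)b = 0$; equivalently, $2\big(b + 2b + \cdots + (a-1)b\big) = a(a-1)b$, which is precisely the identity quoted in the statement. There is essentially no obstacle: the only points needing a moment's care are that the constant choice $f \equiv 0$ satisfies $(49)$ (which uses $b > 0$) and that the sum is direct (which uses $\gcd(a,b)=1$); both are routine.
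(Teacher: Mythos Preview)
Your proof is correct and takes essentially the same approach as the paper, which simply says ``Take $f$ to be the constant zero function in Corollary $4.2$.'' You have supplied more detail than the paper does (the verification of $(49)$, the alternative argument from Example~2.1, and the explicit asymmetry computation), but the core idea is identical.
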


\begin{proof}
Take $f$ to be the constant zero function in Corollary $4.2$.
\end{proof}

\begin{implementation}
In the MAPLE script available at \cite{Program}, binary reductions should be represented in the following format (preserving the notation from Figure $10$):
\begin{equation}
\text{Binary(a, b):}
\end{equation}

Due to a simple transformation of reduction graphs called scaling (covered in the next section), binary reductions can support a more general situation, where the parameters are not coprime; see Figure $13$ for the representation of such an edge. The reader should specify the existence of a common divisor of the two parameters explicitly (e.g., Binary(a*d, b*d)); otherwise, the program will assume that all variables involved are pairwise relatively coprime.
\end{implementation}

Now, using only Corollaries $4.1$ and $4.2$, let us show how to construct a reduction graph for certain Fibonacci triplets (the Frobenius problem for semigroups generated by these triplets was originally solved in \cite{Fibo}):

\begin{proposition}[Fibonacci Triplets]
Define the Fibonacci sequence by the usual recursive relation $F_{n+2} := F_n + F_{n+1}$, starting from $F_0 := 0$ and $F_1 := 1$. Let $i \geq 1, k \geq 3$ be integers. Then Figure $11$ shows a total reduction graph of the numerical semigroup $\langle F_i, F_{i+2}, F_k\rangle$: 
\begin{figure}[h]
\centering
\includegraphics[scale = 0.9]{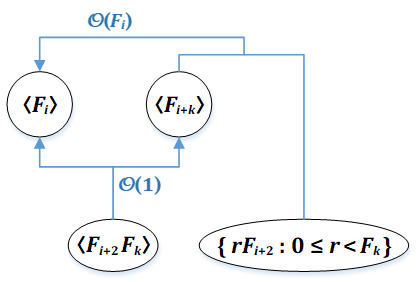}
\caption{Reduction Graph for Fibonacci Triplets}
\end{figure}
\FloatBarrier

In particular, the Frobenius number and genus of the semigroup $\langle F_i, F_{i+2}, F_k \rangle$ can be found using the graph above and Corollary $4.1$. These values were initially discovered in \cite{Fibo}; the tool of reduction graphs can help simplify the original reasoning.
\end{proposition}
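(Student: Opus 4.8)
The plan is to verify that the weighted oriented hypergraph drawn in Figure $11$ meets every clause of Definition $3.1$ and is \emph{total} in the sense of Definition $2.2$ (that is, $\Bal(G)=1$); once that is done, Proposition $3.1$ and Theorem $3.1$ identify $\Ap(\langle F_i,F_{i+2},F_k\rangle,\,r(G))$ with the direct sum of the remainder sets of the edges, and Corollary $3.1$ converts this into the closed forms for the Frobenius number and genus found in \cite{Fibo}. A preliminary bookkeeping step is to confirm that $S(G)$, the sum of the node sets, really equals $\langle F_i\rangle+\langle F_{i+2}\rangle+\langle F_k\rangle$: one generator $\langle b\rangle$ has been replaced, via the node-splitting identity $\langle b\rangle=\langle ab\rangle\oplus\{ar:0\le r<b\}$ noted after Corollary $4.1$, by the pair of nodes $\langle ab\rangle$ and $\{ar:0\le r<b\}$, and this substitution leaves the total sum unchanged.

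Next I would check, edge by edge, that each arrow of Figure $11$ is a legitimate reduction of one of the three permitted types. The \emph{linear-reduction} edge (Corollary $4.1$) removes the split-off node $\langle ab\rangle$: here one must exhibit nonnegative-integer coefficients writing $ab$ in terms of the remaining generators, and this is exactly where the Fibonacci addition formula $F_{m+n}=F_{m+1}F_n+F_mF_{n-1}$ does the work — for instance it gives $F_k=F_{k-i}F_{i+1}+F_{k-i-1}F_i$, hence $F_k=F_{k-i}F_{i+2}-F_{k-i-2}F_i$, and rearrangements of this kind turn a suitable multiple of one generator into an honest $\N$-combination of the other two. The \emph{residue-reduction} edge (Corollary $4.2$), which carries the split tail anticipated in the remark after that corollary, requires presenting the relevant node — or the sum of the tail nodes — in the form $\{af(y)+by:y\ge 0\}$ with $\gcd(a,b)=1$, the modulus $a$ being one of $F_i,F_{i+2}$ (which are coprime, since $\gcd(F_i,F_{i+2})=F_{\gcd(i,i+2)}=1$), and then verifying the inequality $f(aq+r)\ge f(r)-bq$ for a concrete $f$ built out of Fibonacci numbers; monotonicity of $n\mapsto F_n$ together with the addition formula should settle this. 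The $\langle F_i\rangle+\langle F_{i+2}\rangle$ portion can be handled by a \emph{binary reduction} (Corollary $4.3$), which is automatically symmetric.

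The structural requirements of Definition $3.1$ are then read off the picture: the displayed orientation is acyclic and gives every non-root node outdegree $1$, $E(G)$ is finite, and the root is the monogenic semigroup $\langle r(G)\rangle$ for the appropriate Fibonacci-related integer. What remains is totality, $\Bal(G)=r(G)/\prod_{e\in E(G)}w(e)=1$: the weights are the moduli of the residue and binary edges (and $1$ for the linear edge), and one checks that their product telescopes to $r(G)$ using $\gcd(F_m,F_n)=F_{\gcd(m,n)}$ and the identity $F_k=F_{k-i}F_{i+2}-F_{k-i-2}F_i$. Note that we need not separately argue that the intermediate sums are direct sums: by the proof of Theorem $3.1$, $\Bal(G)=1$ forces this automatically.

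The main obstacle is the hypothesis of Corollary $4.2$ for the residue edge. One has to guess the correct function $f\colon\N\to\N$ — its values on $\{0,1,\dots,a-1\}$ are precisely the Apéry-set entries, so getting $f$ right is the whole game — and then prove $f(aq+r)\ge f(r)-bq$ in the regime where $b$ is \emph{negative}, which forces a careful induction on $q$ resting on Cassini/Catalan-type identities and the growth rate of the Fibonacci sequence. A secondary annoyance is degeneracies at small indices: when $i=1$ (so $F_i=1$ and the semigroup is $\N$), or when $k$ is small enough that $F_k$ already lies in $\langle F_i,F_{i+2}\rangle$ and the third generator is redundant, Figure $11$ collapses to a shorter chain, and one should confirm that it still satisfies Definition $3.1$ and stays total so the formulas of Corollary $3.1$ keep applying; handling $\gcd(F_i,F_k)>1$ in the binary part may likewise require the scaling device of Section $5$.
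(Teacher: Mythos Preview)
Your outline captures the broad shape --- node-splitting, a linear edge, and a residue edge with split tail --- but two concrete points go astray. First, there is \emph{no} binary edge in Figure~11. The graph has exactly two edges: a linear edge (weight $1$) with input $\langle F_{i+2}F_k\rangle$ and outputs $\langle F_i\rangle,\langle F_{i+k}\rangle$, justified by the identity $F_{i+2}F_k=F_{k-2}F_i+F_{i+k}$; and a residue edge (weight $F_i$) whose split tail consists of $\{rF_{i+2}:0\le r<F_k\}$ together with $\langle F_{i+k}\rangle$, with output $\langle F_i\rangle$. Since $\langle F_{i+2}\rangle$ has been \emph{entirely} replaced by the split pair, it is no longer a node, so the binary reduction you propose between $\langle F_i\rangle$ and $\langle F_{i+2}\rangle$ cannot be drawn; and had it been present, the product of weights would overshoot $r(G)=F_i$ and destroy totality. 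With the correct two-edge structure, $\Bal(G)=F_i/(1\cdot F_i)=1$ on sight, and your concern about $\gcd(F_i,F_k)$ in a ``binary part'' is moot.

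Second, and more substantively, you defer identifying the function $f$ in the residue reduction, calling it a guess --- but it is forced by the construction and constitutes the actual content of the proof. Using $F_{i+k}=F_{i+2}F_k-F_{k-2}F_i$ and substituting $y:=nF_k+r$, the two tail nodes combine as
\[
\{rF_{i+2}:0\le r<F_k\}+\langle F_{i+k}\rangle=\Bigl\{\,yF_{i+2}-F_iF_{k-2}\Bigl\lfloor\tfrac{y}{F_k}\Bigr\rfloor:y\ge 0\,\Bigr\},
\]
so one is obliged to take $a=F_i$, $b=F_{i+2}>0$, and $f(y)=-F_{k-2}\lfloor y/F_k\rfloor$. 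The obstacle is therefore not a negative $b$, as you suggest, but a \emph{decreasing} $f$; condition~(49) becomes $F_{k-2}\lfloor(qF_i+r)/F_k\rfloor\le qF_{i+2}$ for $q\ge 1$, $0\le r<F_i$, which follows from the crude bound $\lfloor(qF_i+r)/F_k\rfloor\le (q+1)F_i/F_k$ together with $F_iF_{k-2}\le F_iF_{k-1}=F_{i+2}F_k-F_{k-2}F_i-F_iF_{k-2}+F_iF_{k-2}\le F_{i+k}\le qF_{i+k}$. No Cassini or Catalan identities and no induction on $q$ are needed --- just the one addition formula already used for the linear edge.
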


\begin{proof}
Let $G$ denote the reduction graph from Figure $11$. Firstly, the sum of the nodes of $G$ is precisely the semigroup $\langle F_i, F_{i+2}, F_k \rangle$, since $\langle F_{i+2}\rangle = \langle F_{i+2}F_k \rangle + \{rF_{i+2} : 0 \leq r < F_k \}$. Also, $\Bal(G) = F_i \cdot (1 \cdot F_i)^{-1} = 1$, so $G$ is total. Hence we should only detail the edges used. The edge with input $\langle F_{i+2}F_k \rangle$ is a linear edge (Corollary $4.1$), based on the following easy identity:
\begin{equation}
F_{i+2}F_k = F_{k-2}F_i + F_{i+k}
\end{equation}

The other edge represents a residue reduction (Corollary $4.2$) with a split tail, which uses that:
\begin{equation}
\begin{split}
\{ rF_{i+2} : 0 \leq r < F_k \} + \langle F_{i+k} \rangle &= \{ rF_{i+2} + n(F_{i+2}F_k - F_{k-2}F_i) : 0 \leq r < F_k, n \geq 0 \} \\
&= \left\{ yF_{i+2} - F_i F_{k-2}\lf \frac{y}{F_k} \rf : y \geq 0 \right\}
\end{split}
\end{equation}

Above, we applied the substitution $y := nF_k + r$. Hence by Corollary $4.2$, it suffices to check that the function $f : \N \to \N$, $f(y) := - F_{k-2}\lf \frac{y}{F_k} \rf$ satisfies relation $(49)$ for $a = F_i$ and $b = F_{i+2}$.

Indeed, given $0 \leq r < F_i$ and $q \geq 1$, we have the gross approximation:
\begin{equation}
-f(aq+r) = F_{k-2}\lf \frac{qF_i + r}{F_k} \rf \leq F_{k-2}\frac{(q+1)F_i}{F_k} 
\end{equation}

Hence to satisfy $(49)$, it remains to prove that the RHS above is at most $qF_{i+2} - f(r)$, which is greater than or equal to $qF_{i+2}$. Thus it suffices to show that $(q+1)F_{k-2}F_i \leq qF_{i+2}F_k$, or equivalently, $F_{k-2}F_i \leq qF_{i+k}$ by $(55)$. Indeed, using that $q \geq 1$, we get:
\begin{equation}
\begin{split}
qF_{i+k} \geq F_{i+k} &= F_{i+2}F_k - F_{k-2}F_i \\
&\geq F_i (F_k - F_{k-2}) = F_i F_{k-1} \geq F_i F_{k-2}
\end{split}
\end{equation}
\end{proof}

Before we state the next application of linear exchanges, let us mention a useful substitution:

\begin{lemma}[Consecutive Differences]
If $a_1, \ldots, a_n$ are positive integers and $b_k := a_k - a_{k-1}$ for each $1 \leq k \leq n$ (where $a_0 := 0$), the following equality of sets holds:
\begin{equation}
\langle a_1 \rangle + \ldots + \langle a_n \rangle = \{ b_1 x_1 + \ldots + b_n x_n : x_1 \geq \ldots \geq x_n \geq 0 \}
\end{equation}
\end{lemma}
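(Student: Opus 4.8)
The plan is to prove the asserted identity $(59)$ by setting up a bijection between the two parameter sets under which the relevant linear forms coincide. The starting point is the telescoping identity $a_k = b_1 + b_2 + \cdots + b_k$, valid for every $1 \leq k \leq n$ and immediate from $b_j = a_j - a_{j-1}$ together with $a_0 = 0$ (note that $b_1 = a_1 > 0$ but the later $b_j$ may well be negative; this plays no role). Since by definition $\langle a_1 \rangle + \cdots + \langle a_n \rangle = \{\, a_1 y_1 + \cdots + a_n y_n : y_1, \ldots, y_n \in \N \,\}$, the entire content of the lemma is to re-express a generic sum $\sum_k a_k y_k$ in terms of the differences $b_k$.

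The substitution to use is $x_j := y_j + y_{j+1} + \cdots + y_n$ for $1 \leq j \leq n$ (with the convention $x_{n+1} := 0$). First I would verify that $(y_1, \ldots, y_n) \mapsto (x_1, \ldots, x_n)$ is a bijection from $\N^n$ onto the set $\{(x_1, \ldots, x_n) : x_1 \geq x_2 \geq \cdots \geq x_n \geq 0\}$ on the right-hand side of $(59)$: each $x_j \geq 0$, and $x_j - x_{j+1} = y_j \geq 0$ gives the monotonicity, while conversely any monotone nonnegative tuple $(x_j)$ is the image of the unique vector $y_j := x_j - x_{j+1} \in \N$. Then the Abel-summation computation
\[
\sum_{k=1}^n a_k y_k = \sum_{k=1}^n y_k \sum_{j=1}^k b_j = \sum_{j=1}^n b_j \sum_{k=j}^n y_k = \sum_{j=1}^n b_j x_j,
\]
in which the middle step merely swaps the order of summation over the triangular range $1 \leq j \leq k \leq n$, shows that the two linear forms match under this bijection; combining this with the bijectivity statement yields $(59)$.

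The argument is pure bookkeeping, so I do not expect a genuine obstacle; the only points needing a moment's care are checking that the correspondence $(y_j) \leftrightarrow (x_j)$ really is a bijection onto the monotone cone — i.e.\ that the constraints $x_1 \geq \cdots \geq x_n \geq 0$ translate back exactly to $y_j \in \N$ — and keeping the index ranges straight when interchanging the two summations. One could alternatively phrase the passage from unrestricted coefficients to monotone coefficients as a sequence of elementary linear exchanges along the lines of Lemma $4.1$, but the closed-form substitution above is more transparent and makes the underlying bijection explicit.
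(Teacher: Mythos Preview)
Your proof is correct and follows essentially the same approach as the paper: both use the substitution $x_k := y_k + y_{k+1} + \cdots + y_n$ and the telescoping identity $a_k = b_1 + \cdots + b_k$ to pass from the $(y_k)$-parametrization to the $(x_k)$-parametrization. Your version is slightly more detailed in that you explicitly verify the bijection between $\N^n$ and the monotone cone, whereas the paper simply asserts the substitution in a single line.
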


\begin{proof}
We have:
\begin{equation}
\begin{split}
\langle a_1 \rangle + \ldots + \langle a_n \rangle
&= \{ a_1 y_1 + \ldots + a_n y_n : y_1, \ldots, y_n \geq 0 \} \\
&= \{ b_1y_1 + \ldots + (b_1 + \ldots + b_n) y_n : y_1, \ldots, y_n \geq 0 \} \\
&= \{ b_1(y_1 + \ldots + y_n) + \ldots + b_n y_n : y_1, \ldots, y_n \geq 0 \} \\
&= \{ b_1x_1 + \ldots + b_n x_n : x_1 \geq \ldots \geq x_n \geq 0 \},
\end{split}
\end{equation}

where we used the substitution $x_k := y_k + \ldots + y_n$ for each $1 \leq k \leq n$.
\end{proof}

There are several applications of the consecutive differences substitution followed by linear exchanges; we only prove the most important one:

\begin{corollary}[Modified Arithmetic Reduction]
Let $h$, $a$, $k$ be positive integers and $d$ be a (not necessarily positive) integer such that $\gcd(a, d) = 1$ and $a + kd > 0$. Then we have the reduction:
\begin{equation}
\begin{split}
\langle a \rangle + \langle ha + d \rangle + \langle ha + 2d \rangle + \ldots + \langle ha + kd \rangle &= \langle a \rangle \oplus \left\{ ha\lc \frac{r}{k} \rc + dr : 0 \leq r < a \right\} \\
&= \langle a \rangle \oplus \bigO(a)
\end{split}
\end{equation}

We call this a \emph{modified arithmetic reduction}, according to the terminology from \cite{Modified}:
\begin{figure}[h]
\centering
\includegraphics[scale = 0.9]{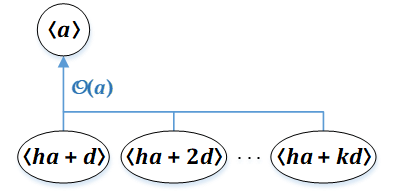}
\caption{Modified Arithmetic Reduction Edge}
\end{figure}
\FloatBarrier

We can use this Ap\'ery reduction to compute the Frobenius number and genus of semigroups generated by so-called \emph{modified arithmetic sequences} (i.e., of the form $\langle a, ha+d, ha+2d, \ldots, ha+kd \rangle$). These values were first discovered by A. Tripathi \cite{Modified} for the case $d > 0$; allowing $d$ to be negative is a small original contribution of this paper.

When $h = 1$, one obtains a semigroup generated by an arithmetic sequence, corresponding to a plain \emph{arithmetic reduction}; this case \cite{Arithm} has been studied long before its modified version in \cite{Modified}. We note that both graphs in Figure $3$ use one binary reduction and one arithmetic reduction. 
\end{corollary}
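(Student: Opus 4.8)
The plan is to route the proof through two tools already in hand: the consecutive-differences substitution (Lemma 4.2) and the residue reduction (Corollary 4.2). The point is that, after the substitution, the left-hand side should collapse to a node of exactly the shape $\langle a\rangle + \{af(y)+dy : y\ge 0\}$ that Corollary 4.2 is designed for, with $f(y) = h\lceil y/k\rceil$.

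First I would apply Lemma 4.2 to the $k+1$ monogenic semigroups $\langle a\rangle, \langle ha+d\rangle, \dots, \langle ha+kd\rangle$ --- all of which are positive, since $ha+jd \ge ha+kd \ge a+kd > 0$ whenever $d \le 0$. Taking $a_1 := a$ and $a_{j+1} := ha+jd$ for $1 \le j \le k$, the consecutive differences come out as $b_1 = a$, $b_2 = (h-1)a+d$ and $b_3 = \dots = b_{k+1} = d$, so the left-hand side becomes
\[
\bigl\{\, a\bigl(x_1 + (h-1)x_2\bigr) + d\,(x_2 + x_3 + \dots + x_{k+1}) \ :\ x_1 \ge x_2 \ge \dots \ge x_{k+1} \ge 0 \,\bigr\}.
\]
Writing $y := x_2 + \dots + x_{k+1}$, the key elementary observation is that for each fixed $y \ge 0$ the attainable values of the ``$a$-coefficient'' $c := x_1 + (h-1)x_2$ are exactly the integers $\ge h\lceil y/k\rceil$: the $k$ integers $x_2 \ge \dots \ge x_{k+1} \ge 0$ force their maximum $x_2$ to satisfy $x_2 \ge \lceil y/k\rceil$ (with equality attained by spreading $y$ as evenly as possible across $k$ slots), while $x_1$ is free above $x_2$, so $c \ge hx_2 \ge h\lceil y/k\rceil$ and every larger integer is reached by raising $x_1$. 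This identifies the left-hand side with $\langle a\rangle + \{\, h\lceil y/k\rceil\,a + dy : y \ge 0 \,\}$. (One could also reach this normal form by linear exchanges on the original generators, using the kernel relations $(ha+jd) - 2(ha+(j+1)d) + (ha+(j+2)d) = 0$ to consolidate the arithmetic generators and $k(ha+d) = (ha+kd) + (k-1)h\cdot a$ to trade surplus arithmetic generators for copies of $a$.)

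Second, I would feed this into Corollary 4.2 with $f(y) := h\lceil y/k\rceil \in \N$ and $b := d$, which is coprime to $a$. The only genuine verification is hypothesis $(49)$, namely $h\lceil (aq+r)/k\rceil \ge h\lceil r/k\rceil - dq$ for $0 \le r < a$ and $q \ge 1$. When $d \ge 0$ this is immediate because $f$ is nondecreasing. When $d < 0$, the hypothesis $a+kd>0$ gives $a \ge k\lvert d\rvert + 1$, so $aq+r \ge qk\lvert d\rvert + (q+r)$ and hence $\lceil (aq+r)/k\rceil \ge q\lvert d\rvert + \lceil (q+r)/k\rceil \ge q\lvert d\rvert + \lceil r/k\rceil$; multiplying by $h \ge 1$ gives $f(aq+r) \ge q\lvert d\rvert + f(r) = -dq + f(r)$, as needed. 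Corollary 4.2 then delivers precisely the claimed reduction, with remainder (Apéry) set $\{\, ha\lceil r/k\rceil + dr : 0 \le r < a \,\}$ of cardinality $a$ (distinctness modulo $a$ being clear from $\gcd(a,d)=1$). Setting $h = 1$ recovers the plain arithmetic reduction.

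I expect the main obstacle to be the first step: keeping the stray $(h-1)a\,x_2$ term under control when $h \ge 2$, and proving cleanly that $h\lceil y/k\rceil$ is the least $a$-coefficient compatible with a prescribed $y$ (the ``spread $y$ evenly over $k$ slots'' estimate). Verifying $(49)$ for $d < 0$ is short, but it is exactly the place where this statement goes beyond \cite{Modified}, so I would spell it out with care.
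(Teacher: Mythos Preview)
Your proposal is correct and follows essentially the same route as the paper: apply the consecutive-differences substitution (Lemma 4.2) to rewrite the left-hand side as $\langle a\rangle + \{\,ha\lceil y/k\rceil + dy : y\ge 0\,\}$, then invoke the residue reduction (Corollary 4.2) with $f(y)=h\lceil y/k\rceil$, the only real work being the verification of (49) when $d<0$. The cosmetic differences are that the paper applies Lemma 4.2 only to $ha+d,\ldots,ha+kd$ (keeping $\langle a\rangle$ separate) and handles the $d<0$ case via $\lceil x\rceil - \lceil y\rceil \ge x-y-1$ followed by a strict-integer-inequality trick, whereas your estimate using $a\ge k|d|+1$ is arguably cleaner.
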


\begin{proof}
Our plan is to reduce the LHS of $(61)$ to $\langle a \rangle + \left\{ ha\lc \frac{y}{k} \rc + dy : y \geq 0 \right\}$ using Lemma $4.2$, and then to apply a residue reduction. Indeed, applying Lemma $4.2$ for $ha + d, \ldots, ha + kd$ yields that:
\begin{equation}
\langle a \rangle + \langle ha + d \rangle + \ldots + \langle ha + kd \rangle = \{ a x_0 + ha x_1 + d(x_1 + \ldots + x_k) : x_0 \geq 0, x_1 \geq \ldots \geq x_k \geq 0 \}
\end{equation}

It can be checked that when $x_1$ is kept fixed and $x_2 \geq \ldots \geq x_k \geq 0$ are variables bounded by $x_1$, the sum $x_1 + \ldots + x_k$ spans all the integers $y$ such that $x_1 \leq y \leq kx_1$. Then the RHS of $(62)$ becomes:
\begin{equation}
\{ a (x_0 + h x_1) + dy : x_0 \geq 0, x_1 \leq y \leq kx_1 \}
\end{equation}

Let $x := x_0 + hx_1$. Then for a fixed value of $y \geq 0$, since $x_0$ can be any nonnegative integer, $x$ can assume any value greater than or equal to the smallest $hx_1$ with $x_1 \leq y \leq kx_1$. The smallest such $x_1$ is clearly $x_1 = \lc \frac{y}{k} \rc$ (using the ceiling notation), and hence the LHS of $(61)$ is the same as:
\begin{equation}
\left\{ ax + dy : x \geq h\lc \frac{y}{k} \rc, y \geq 0 \right\} = \langle a \rangle + \left\{ ha\lc \frac{y}{k} \rc + dy : y \geq 0 \right\}
\end{equation}

Now we are in the position to apply a residue reduction (Corollary $4.2$), provided that the function $f : \N \to \N$, $f(y) := ha\lc \frac{y}{k} \rc$, satisfies relation $(49)$ for $b = d$. Let $0 \leq r < a$ and $q \geq 1$. If $d \geq 0$, we get $f(aq + r) \geq f(r) \geq f(r) - dq$. Otherwise, we have $-d > 0$ and we want to show that:
\begin{equation}
h\lc \frac{aq + r}{k} \rc \geq h\lc \frac{r}{k} \rc + (-d)q,
\end{equation}

knowing from the hypothesis that $a + kd > 0$, so $\frac{aq}{k} > (-d)q$. Using the rough estimates $h \geq 1$ and $\lc x \rc - \lc y \rc \geq \lc x - y \rc - 1 \geq x - y - 1$ for all $x, y \in \mathbb{R}$, we find that:
\begin{equation}
\begin{split}
h\left(\lc \frac{aq + r}{k} \rc - \lc \frac{r}{k} \rc\right)
\geq \frac{aq}{k} - 1 > (-d)q - 1
\end{split}
\end{equation}

Since this is a strict inequality of integers, we can turn it into a non-strict inequality by dropping the $-1$. 
This proves relation $(65)$, so we can apply a residue reduction on the RHS of $(64)$. The result of the reduction is precisely the RHS of $(61)$ (where $y \geq 0$ is replaced by $0 \leq r < a$), which completes our proof.
\end{proof}

\begin{remark}
Taking a limit of the sets from $(61)$ when $k \to \infty$ (in particular, $\lc \frac{r}{k} \rc \to 1$ if $r > 0$), in the sense that $x \in \lim_{n \to \infty} A_n$ if and only if $x \in A_n$ for all sufficiently large $n$, one can extend Corollary $4.4$ to infinite modified arithmetic sequences provided that $d \geq 0$:
\begin{equation}
\begin{split}
\langle a \rangle + \langle ha + d \rangle + \langle ha + 2d \rangle + \ldots &= \langle a \rangle \oplus \left(\{0\} \cup \left\{ ha + dr : 0 < r < a \right\}\right) \\
&= \langle a \rangle \oplus \bigO(a)
\end{split}
\end{equation}
\end{remark}

\begin{implementation}
Within the MAPLE script available on arXiv \cite{Program}, (modified) arithmetic reductions should be represented in the following format, using the notation from Figure $12$:
\begin{equation}
\text{Arithmetic(a, d, k, h):}
\end{equation}

The last parameter is optional; when $h$ is not specified, the script will assume that $h = 1$, yielding the case of usual arithmetic progressions. To indicate a reduction corresponding to an infinite progression, as described in $(67)$, one should write the word \emph{infinity} instead of $k$.

Like for binary reductions, our MAPLE implementation \cite{Program} of arithmetic reductions allows for a more general case than the one described above: the parameters $a$ and $d$ need not be coprime. This more general reduction scales all the nodes in Figure $12$ by a common divisor, say $g$; see Figure $13$ for the resulting edge. In that case, any common divisor of $a$ and $d$ should be mentioned explicitly (e.g., "Arithmetic(ag, dg, k, h)"); the program will otherwise assume that $\gcd(a, d) = 1$.

\end{implementation}

\begin{remark}
Computing the generating function of the Ap\'ery set $\left\{ ha\lc \frac{r}{k} \rc + dr : 0 \leq r < a \right\}$ from $(61)$ is a little more complicated than in the case binary reductions, due to the presence of the ceiling function. The closed form of this generating function is integrated in our MAPLE script, and we shall not bother proving it here; it is obtained by summing over an additional variable $j := \lc \frac{r}{k} \rc$, which varies from $0$ to $q := \lc \frac{a-1}{k} \rc$. The $q$-notation is preserved in the results of the program, as shown in the output fragment from the end of Section $3$.
\end{remark}

The same method (i.e., a consecutive differences substitution followed by linear exchanges, perhaps through a residue reduction) may be applicable to studying sequences of Mersenne \cite{Mersenne}, repunit \cite{Repunit}, Thabit \cite{Thabit, Four} and Cunningham \cite{Four} numbers. We will not develop these cases here since we want to focus on building reduction graphs rather than finding new basic reductions, but we can provide a table summary. The case of arithmetic progressions (discussed before) is listed as a clarifying example:
\begin{center}
\begin{tabular}{ c | c | c | c }
Name & Semigroup Generators & Consecutive Differences & Reason for Lin. Exch.
\\ \hline 
\rule{0pt}{3ex}    
Arithmetic
& 
$a, a + d, \ldots, a + kd$
&
$a, d, \ldots, d$
&
$d - d = 0$
\\
\rule{0pt}{3ex} 
Mersenne
&
$2^n - 1, 2^{n+1} - 1, \ldots$
&
$2^n - 1, 2^n, 2^{n+1}, \ldots$
&
$2 \cdot 2^{n+i} - 2^{n+i+1} = 0$
\\
\rule{0pt}{3ex} 
Repunit
&
$\frac{b^n - 1}{b - 1}, \frac{b^{n+1}-1}{b-1}, \ldots$
&
$\frac{b^n - 1}{b - 1}, b^n, b^{n+1}, \ldots$
&
$b \cdot b^{n+i} - b^{n+i+1} = 0$
\\ & & & \\
\rule{0pt}{3ex}
Thabit 1
&
$(b+1)b^n - 1, \ldots$
&
$(b+1)b^n - 1, (b^2-1)b^n, \ldots$
&
$b(b^2 - 1)b^{n+i} \qquad\q$
\\
\rule{0pt}{3ex} 
Thabit 2
&
$(b+1)b^n + 1, \ldots$
&
$(b+1)b^n + 1, (b^2-1)b^n, \ldots$
&
$- (b^2-1)b^{n+i+1} = 0$
\\
$(b \neq 3k+1)$ & & &
\\
\rule{0pt}{3ex} 
Thabit Ext.
&
$(2^k-1)2^n - 1, \ldots$
&
$(2^k-1)2^n - 1, (2^k-1)2^n, \ldots$
&
$2(2^k-1)2^{n+i} \qquad\q$
\\
& & & $- (2^k - 1)2^{n+i+1} = 0$
\\ & & & \\
\rule{0pt}{3ex}
Cunningham
&
$b^n + 1, b^{n+1} + 1, \ldots$
&
$b^n + 1, (b-1)b^n, \ldots$
&
$b(b-1)b^{n+i+1} = 0$
\\
$(b \text{ even})$ & & & $- (b-1)b^{n+i+1} = 0$
\end{tabular}
\end{center}

What is common for these cases is that the consecutive differences of the semigroup generators are simpler - or better related to each other through linear exchanges - than the generators themselves. Moreover, all of these cases lead to Ap\'ery reductions (recall Figure $2$, right).

\section{Valid Operations on Reduction Graphs}

Here we study a few operations one can apply on a given reduction graph to obtain a new reduction graph. These operations suggest a constructive approach to the Frobenius problem, in which one can discover a new graph-solvable semigroup by altering or combining previous ones.

\subsection{Scaling and Composition}

\begin{definition}[Scaling Sets and Reduction Graphs]
Let $S$ be a set of nonnegative integers, $G$ a reduction graph, and $k$ a positive integer. Then we define $kS := \{ks : s \in S\}$ to be the \emph{scaling} of $S$ by $k$, and $kG$ to be the graph obtained by scaling all of the nodes of $G$ by $k$, while keeping the weights the same. Also, given an individual reduction edge, its scaling by $k$ is obtained by scaling all of its inputs and outputs by $k$.
\end{definition}

\begin{lemma}[Scaling]
If $G$ is a reduction graph and $k$ is a positive integer, then $kG$ is a reduction graph with $S(kG) = kS(G)$, $\Bal(kG) = k\Bal(G)$, and $A(kG) = kA(G)$. In particular, any scaling of a symmetric edge is symmetric. Moreover, if $G$ is total, then $kG$ is total.
\end{lemma}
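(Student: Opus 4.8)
The plan is to verify each assertion by tracking how scaling by $k$ interacts with the elementary set operations in play — sums, direct sums, cardinalities, maxima and arithmetic means — and then reading off the conclusions from Definitions $3.1$ and $3.2$. The one observation used throughout is that multiplication by a fixed positive integer $k$ is an injection of $\N$ into itself that commutes with addition, so that $k(a_1 + \ldots + a_n) = ka_1 + \ldots + ka_n$; consequently, for sets of nonnegative integers $A, B$ one has $kA + kB = k(A + B)$, the map $(a, b) \mapsto (ka, kb)$ matches the fibers of addition on $A \times B$ with those on $kA \times kB$, and $|kA| = |A|$.

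First I would check that $kG$ satisfies the three conditions of Definition $3.1$. Each node $kX$ is again a nonempty set of nonnegative integers containing $0$, since $0 \in X$ forces $0 = k \cdot 0 \in kX$. For an edge $e$ of $G$ encoding a reduction $A + B = A \oplus \Rem(e)$, with $A = \sum_i A_i$ and $B = \sum_j B_j$ the splittings into its output and input nodes, scaling gives $kA = \sum_i kA_i$, $kB = \sum_j kB_j$, and $kA + kB = k(A + B) = k(A \oplus \Rem(e)) = kA \oplus k\Rem(e)$, the sum staying direct because injectivity of $x \mapsto kx$ preserves uniqueness of the representation $c = a + b$; so the scaled edge is a valid reduction edge with remainder set $k\Rem(e)$ and weight $|k\Rem(e)| = |\Rem(e)| = w(e)$, i.e. weights are unchanged. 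Acyclicity, the finiteness of $E(G)$, and the outdegree conditions are purely combinatorial features of the incidence structure, untouched by scaling the nodes, and the root $\langle r(G) \rangle$ becomes $k\langle r(G)\rangle = \langle k\, r(G) \rangle$, still monogenic; hence $kG$ is a reduction graph with $r(kG) = k\, r(G)$.

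Given this, the remaining identities follow at once. One has $S(kG) = \sum_{X \in V(G)} kX = k \sum_{X \in V(G)} X = kS(G)$; then $\Bal(kG) = r(kG)/\prod_e w(e) = k\, r(G)/\prod_e w(e) = k\,\Bal(G)$, using that the weights are preserved. An edge with remainder set $R$ scales to one with remainder set $kR$, and $\mu(kR) = k\mu(R)$, $\max(kR) = k\max(R)$, so its asymmetry scales as $2k\mu(R) - k\max(R) = k\, A(e)$; summing over $E(G)$ gives $A(kG) = k\, A(G)$, and in particular a symmetric edge ($A(e) = 0$) scales to a symmetric edge. Finally, if $G$ is total then $\Bal(G) = \gcd(S(G))$, so $\Bal(kG) = k\,\Bal(G) = k\gcd(S(G)) = \gcd(kS(G)) = \gcd(S(kG))$, i.e. $kG$ is total.

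I do not expect a real obstacle here; the only point needing a moment's care is the preservation of the direct-sum (uniqueness) structure of each reduction edge under scaling, which is precisely where injectivity of $x \mapsto kx$ is used, together with the uniqueness of remainder sets noted after Definition $2.1$.
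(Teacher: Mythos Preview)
Your proof is correct and follows essentially the same approach as the paper: verify that scaling each node preserves the validity of every reduction edge (with remainder set $k\,\Rem(e)$ and unchanged weight), then read off $S(kG)$, $\Bal(kG)$, $A(kG)$, and totality from the definitions. If anything, you supply more detail than the paper (e.g.\ checking the root stays monogenic and spelling out the asymmetry computation), but the argument is the same.
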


\begin{proof}
First we show that $kG$ is a reduction graph. It suffices to check that all of the edges of $kG$ are valid reduction edges, since the other requirements of Definition $3.1$ are clearly inherited from $G$ to $kG$. Consider a reduction edge from $E(G)$ with weight $w$, outputs $A_1, A_2, \ldots$ and inputs $B_1, B_2, \ldots$ like in Figure $2$ (left). Then there exists a remainder set $R$ of cardinality $w$ such that:
\begin{equation}
\sum A_i + \sum B_j = \left(\sum A_i\right) \oplus R
\end{equation}

It should be clear from $(1)$ and $(2)$ that scaling distributes with respect to usual and direct sums (even in the infinite cases), so we can scale everything by $k$ to get that:
\begin{equation}
\sum kA_i + \sum kB_j = \left(\sum kA_i\right) \oplus kR
\end{equation}

This creates a new reduction with the same weight $|kR| = |R| = w$, represented by the corresponding edge in $kG$. Hence $G$ is a reduction graph. The claims about the set $S(kG)$, the balance $\Bal(kG)$ and the asymmetry $A(kG)$ follow easily from their definitions. Finally, if $G$ is total, $\gcd(S(kG)) = \gcd(kS(G)) = k\gcd(S(G)) = k\Bal(G) = \Bal(kG)$, so $kG$ is also total.
\end{proof}

\begin{example}
In Figures $1$ and $12$, the binary and modified arithmetic reduction edges require that $\gcd(a, b) = 1$, respectively $\gcd(a, d) = 1$. Scaling these edges by some positive integer $g$, then substituting $ag \to a$, $bg \to b$, $dg \to d$, leads to the following more general reduction edges:

\begin{figure}[h]
\centering
\includegraphics[scale = 0.85]{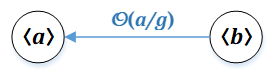}
\hspace{1cm}
\includegraphics[scale = 0.85]{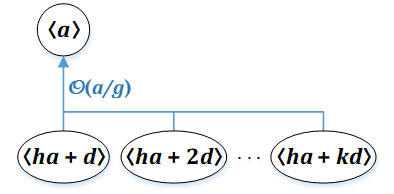}
\caption{Scaled Binary Reduction ($g = \gcd(a, b)$) vs. Scaled Arithmetic Reduction ($g = \gcd(a, d)$)}
\end{figure}
\FloatBarrier
\end{example}

\begin{remark}
Scaling is a reversible operation, in the sense that if all elements of all nodes from a reduction graph are divisible by some integer $k > 1$, one can simultaneously divide all of them by $k$. In this case, of course, the initial graph cannot describe a numerical semigroup, since all elements of the semigroup would share a nontrivial common divisor. Nevertheless, scaled reduction graphs and scaled edges can be useful in combination with other edges to create larger graphs that eventually describe numerical semigroups. For instance, even the simple binary reduction for $\gcd(a, b) = 1$ can now be further decomposed into two simpler edges:
\end{remark}

\begin{figure}[h]
\centering
\includegraphics[scale = 0.85]{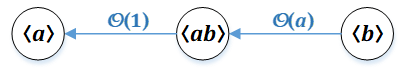}
\caption{Decomposition of a Binary Reduction Edge}
\end{figure}
\FloatBarrier

Above, we used a linear reduction from $\langle ab \rangle$ to $\langle a \rangle$ (which can also be seen as a scaled binary reduction), and a scaled binary reduction from $\langle b \rangle$ to $\langle ab \rangle$. The latter is just a scaling by $b$ of the binary edge from $\langle 1 \rangle$ to $\langle a \rangle$, which is much simpler:
\begin{equation}
\langle a \rangle + \langle 1 \rangle = \langle 1 \rangle = \langle a \rangle \oplus \{0, \ldots, a-1\}
\end{equation}

\begin{remark}
Under certain conditions, one can also apply scaling on a fragment of a reduction graph; we will refer to this operation as \emph{partial scaling}. Using the same reasoning as in the proof of Lemma $5.1$, one can scale any subset of vertices $V \subseteq V(G)$, as long as each edge affected by the scaling remains valid. For instance, scaling the following nodes by a positive integer $k$ does not affect the validity or type of the reduction edge involved:
\begin{itemize}
\item All inputs and simultaneously all outputs of any reduction edge (this is regular scaling);
\item All inputs of a (scaled) binary edge or (scaled) (modified) arithmetic edge, as long as $k$ is coprime with the output node (this follows by scaling $b$, $h$ and $d$ in Figure $13$);
\item The input node and some of the outputs of a linear reduction edge.
\end{itemize}
\end{remark}

\begin{example}
One can obtain the reduction graph for geometric sequences from Figure $4$ via a sequence of partial scalings starting from a much simpler chain, as shown in Figure $15$:
\begin{figure}[h]
\centering
\includegraphics[scale = 0.8]{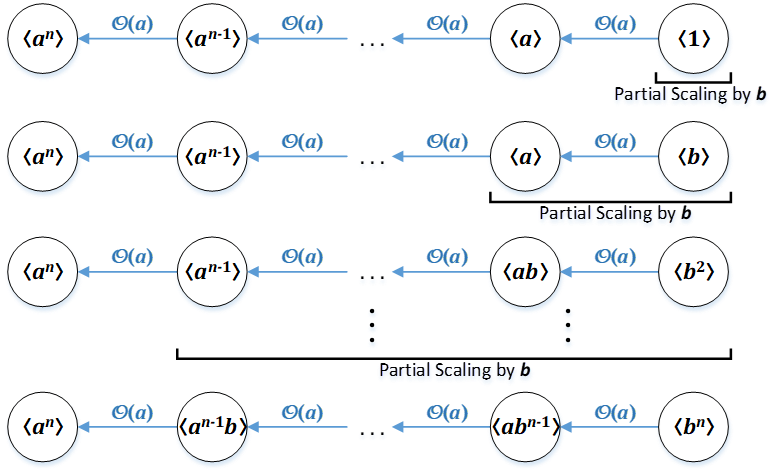}
\caption{Partial Scaling for Geometric Sequences}
\end{figure}
\FloatBarrier
\end{example}

To move on to more complex applications of scaling, we further define an operation between any two reduction graphs called \emph{composition}:

\begin{definition}[Composition]
Let $G$ and $G'$ be reduction graphs, and let $\langle a \rangle \in V(G)$ (we know that $G$ has at least one monogenic semigroup node: its root). Let $\langle a' \rangle$ be the root of $G'$. Then we define the \emph{composition} $G''$ of $G$ and $G'$ via the node $\langle a \rangle$ in two steps:
\begin{enumerate}
\item Construct the scaled graphs $a'G$ and $aG'$. In particular, the graph $a'G$ contains a node $\langle a'a \rangle$, which coincides with the root of $aG'$.
\item Identify the node $\langle a'a \rangle$ of $a'G$ with the root of $aG'$, and call the resulting graph $G''$.
\end{enumerate}

This process is illustrated in Figure $16$:

\begin{figure}[h]
\centering
\includegraphics[scale = 0.9]{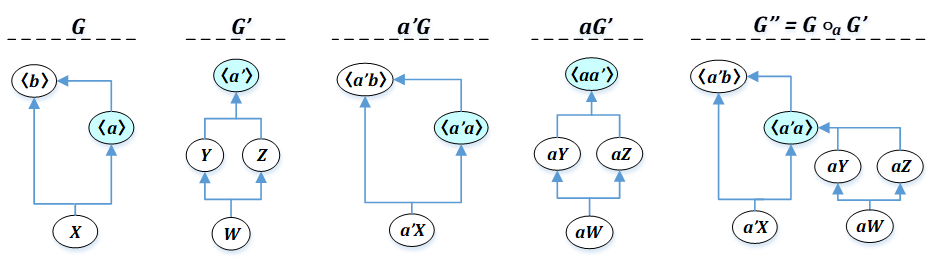}
\caption{Illustration of a Composition (weights not shown)}
\end{figure}
\FloatBarrier

As shown in the figure, we may write $G'' = G \circ_a G'$, as long as $G$ only contains one node of the form $\langle a \rangle$, to eliminate ambiguity. In practice, of course, there is rarely any reason to have the same node occur twice in a reduction graph. If the node $\langle a \rangle$ is the root of $G$, we can drop the subscript and write $G'' := G \circ G'$. We will refer to the operation "$\circ$" as \emph{composition by the root}.
\end{definition}

\begin{lemma}[Composition]
With the notations from Definition $5.2$, we claim that $G''$ is a reduction graph, $S(G'') = a' S(G) + a S(G')$, $\Bal(G'') = \Bal(G) \cdot \Bal(G')$, and $A(G'') = a'A(G) + aA(G')$. In particular, the composition of two symmetric reduction graphs is symmetric. Also, composition by the root is commutative and associative. 
\end{lemma}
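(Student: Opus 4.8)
The plan is to deduce everything from the Scaling Lemma (Lemma 5.1) together with the elementary fact that $G''$ is obtained by gluing the two reduction graphs $a'G$ and $aG'$ along a single vertex. First I would check that $G''$ meets the three conditions of Definition 3.1. Conditions (1) and (2) are inherited from $a'G$ and $aG'$, which are reduction graphs by Lemma 5.1: the nodes are still nonempty subsets of $\N$ containing $0$, the edges are still valid reduction edges (identifying the node $\langle a'a\rangle$ of $a'G$ with the root $\langle aa'\rangle$ of $aG'$ alters neither the nodes nor the edges), and $E(G'')$ is the disjoint union $E(a'G)\cup E(aG')$, hence finite. For acyclicity, I would use that the two pieces share only the cut vertex $v:=\langle aa'\rangle$ and have disjoint edge sets, so following the unique outgoing edge from any node never crosses between the pieces except by entering $v$; since $a'G$ and $aG'$ are individually acyclic, $G''$ is too. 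For condition (3) I would split into cases: if $\langle a\rangle$ is not the root of $G$, then $v$ inherits outdegree $1$ from $\langle a\rangle$, every other node keeps its outdegree, and the unique outdegree-$0$ node is the root $\langle a'\,r(G)\rangle$ of $a'G$, which is monogenic; if $\langle a\rangle$ is the root of $G$ (the composition-by-the-root case), then $v=\langle a'a\rangle$ has outdegree $0$ in both pieces and is itself the root. Either way $r(G'')=a'\,r(G)$.

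Next I would compute $S(G'')$. Writing $T$ for the sum of the nodes of $aG'$ other than its root, we have $S(G'')=a'S(G)+T$ and $aS(G')=v+T$, so it suffices to see that $a'S(G)+v=a'S(G)$. This holds because $v=\langle aa'\rangle=a'\langle a\rangle\subseteq a'S(G)$ (as $\langle a\rangle\in V(G)$), while $a'S(G)$ contains $0$ and is closed under addition; hence $a'S(G)\subseteq a'S(G)+v\subseteq a'S(G)+a'S(G)\subseteq a'S(G)$. Therefore $S(G'')=a'S(G)+T=a'S(G)+v+T=a'S(G)+aS(G')$, using Lemma 5.1 for $S(aG')=aS(G')$.

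The remaining identities are bookkeeping on top of Lemma 5.1. Scaling preserves all edge weights, so $\prod_{e\in E(G'')}w(e)=\prod_{e\in E(G)}w(e)\cdot\prod_{e\in E(G')}w(e)$; combined with $r(G'')=a'\,r(G)$ and $a'=r(G')$ this gives $\Bal(G'')=\Bal(G)\,\Bal(G')$. Likewise $A(G'')=A(a'G)+A(aG')=a'A(G)+aA(G')$ straight from $A(kG)=kA(G)$, so two symmetric graphs compose to a graph of asymmetry $0$, i.e. a symmetric one. Commutativity and associativity are visible from the symmetry of the construction: $G\circ G'$ and $G'\circ G$ are the same graph, namely $r(G')G$ and $r(G)G'$ glued at $\langle r(G)r(G')\rangle$, and writing $r_i:=r(G_i)$, both $(G_1\circ G_2)\circ G_3$ and $G_1\circ(G_2\circ G_3)$ unfold to the three pieces $(r_2r_3)G_1$, $(r_1r_3)G_2$, $(r_1r_2)G_3$ glued at the common node $\langle r_1r_2r_3\rangle$. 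The main obstacle is not conceptual but a matter of care: getting the outdegree/acyclicity check right across the case distinction on whether $\langle a\rangle$ is the root of $G$, and not mishandling the single shared node when summing the vertex sets for $S(G'')$.
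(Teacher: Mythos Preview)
Your approach matches the paper's: invoke Lemma~5.1 for $a'G$ and $aG'$, check the outdegree and acyclicity conditions after gluing, then read off $S$, $\Bal$, and $A$ from the disjoint union $E(G'')=E(a'G)\uplus E(aG')$. Your treatment of acyclicity, the case split on whether $\langle a\rangle$ is the root, and the unfolding of associativity are in fact more careful than the paper's one-line remarks.

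There is one small slip in your computation of $S(G'')$. You assert that $a'S(G)$ is closed under addition in order to get $a'S(G)+a'S(G)\subseteq a'S(G)$, but Definition~3.1 allows the nodes of $G$ to be arbitrary sets of nonnegative integers containing $0$, so $S(G)=\sum_{X\in V(G)}X$ need not be a semigroup. The desired identity $a'S(G)+v=a'S(G)$ still holds, for a simpler reason: since $\langle a\rangle\in V(G)$ is itself a semigroup, one has $\langle a\rangle+\langle a\rangle=\langle a\rangle$, so adding another copy of $\langle a\rangle$ to the sum defining $S(G)$ changes nothing, i.e.\ $S(G)+\langle a\rangle=S(G)$, whence $a'S(G)+a'\langle a\rangle=a'S(G)$. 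The paper compresses this to the single line ``since $\langle a'a\rangle=\langle a'a\rangle+\langle aa'\rangle$'' and then writes $S(G'')=\sum_{X\in V(a'G)}X+\sum_{X\in V(aG')}X$ directly.
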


\begin{proof}
By Lemma $5.1$, both $a'G$ and $aG'$ are valid reduction graphs, and identifying a node of $a'G$ with a node of $aG'$ does not affect the validity of their edges. Moreover, every node of $G''$ except for its root has outdegree $1$, since the root of $aG'$ either becomes the root of $G''$ or receives an outgoing edge. Clearly, $G''$ is also acyclic. The other conditions of Definition $3.1$ are trivially checked, hence $G''$ is a reduction graph.

Now by the way we constructed $G''$, we have $\{w(e) : e \in G''\} = \{w(e) : e \in G\} \biguplus \{w(e) : e \in G'\}$ and $V(G'') = V(G) \biguplus V(G') \setminus \{\langle a'a \rangle\}$ as multisets (where only one instance of $\langle a'a \rangle$ is eliminated to account for the repetition). Therefore:
\begin{equation}
\begin{split}
\Bal(G'') &= \frac{a'b}{\prod_{e \in E(G'')} w(e)} \\
&= \frac{b}{\prod_{e \in E(G)} w(e)} \cdot \frac{a'}{\prod_{e \in E(G')} w(e)} =  \Bal(G) \cdot \Bal(G')
\end{split}
\end{equation}

And since $\langle a'a \rangle = \langle a'a \rangle + \langle aa' \rangle$:
\begin{align}
S(G'') &= \sum_{X \in V(a'G)} X + \sum_{X \in V(aG')} X = a'S(G) + aS(G') \\
A(G'') &= \sum_{e \in E(a'G)} A(e) + \sum_{e \in E(aG')} A(e) = a'A(G) + aA(G')
\end{align}

The commutativity and associativity of composition by the root follow easily from the commutativity and associativity of multiplication, and from the symmetry between the inputs of a reduction edge (i.e., there is no preferred order of these inputs).
\end{proof}

\begin{remark}
The reduction graph for \emph{composed} geometric sequences (Figure $5$) is, as expected, the composition by the root of two reduction graphs for geometric sequences (Figure $4$). In fact, the reduction graph for geometric sequences can itself be seen as a composition of $n$ binary reductions:
\[
G = B \circ_b (\ldots \circ_b (B \circ_b (B \circ_b B))),
\]

where $G$ denotes the reduction graph from Figure $4$ and $B$ denotes the reduction graph from Figure $1$ (right). To clarify, the equation above contains $n$ instances of the graph $B$. In particular, since $\Bal(B) = 1$, we have $\Bal(G) = \Bal(B)^n = 1$, hence we can apply Theorem $3.1$ and Corollary $3.1$ to characterize the numerical semigroups generated by geometric sequences. By generalizing this idea, we obtain the case of so-called \emph{compound} sequences \cite{Compound}:
\end{remark}

\begin{example}[Compound Sequences]
Let $k, a_1, \ldots, a_k, b_1, \ldots, b_k$ be positive integers such that $\gcd(a_i, b_j) = 1$ for all $k \geq i \geq j \geq 1$. For all $0 \leq i \leq k$, define: $n_i := a_{i+1}a_{i+2}\ldots a_k \cdot b_1 b_2\ldots b_i$.

Of course, $n_0, \ldots, n_k$ form a geometric progression when $a_1 = \ldots = a_k$ and $b_1 = \ldots = b_k$. One can check that $\gcd(n_i, n_{i+1}) = \frac{n_i}{a_{i+1}}$ and $\gcd(n_0, \ldots, n_k) = 1$, so that $\langle n_0, \ldots, n_k \rangle$ is a numerical semigroup. Using $k$ scaled binary reductions, we can build a total reduction graph of this semigroup:

\begin{figure}[h]
\centering
\includegraphics[scale = 0.85]{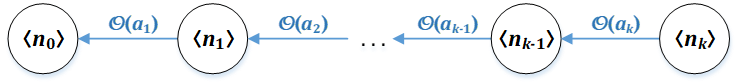}
\caption{Reduction Graph for a Compound Sequence}
\end{figure}
\FloatBarrier

The reduction graph from Figure $17$, call it $G$, can be seen as the composition of $k$ different binary reductions. Indeed, if $B_i$ denotes the binary reduction graph from $b_i$ to $a_i$, for $1 \leq i \leq k$, one can see that:
\begin{equation}
G = B_1 \circ_{b_1} (\ldots \circ_{b_{k-3}} (B_{k-2} \circ_{b_{k-2}} (B_{k-1} \circ_{b_{k-1}} B_k))),
\end{equation}

In particular, $\Bal(G) = 1^k = 1$, so the Ap\'ery set, Frobenius number and genus of semigroups generated by compound sequences can be easily computed using Theorem $3.1$ and Corollary $3.1$; the query for these values and other semigroup attributes was the subject of the paper in \cite{Compound} (surely, the authors of the referenced paper used different methods).
\end{example}

\begin{remarks}
In the original paper \cite{Compound}, the authors impose some additional restrictions on compound sequences (i.e., $2 \leq a_i < b_i$ for each $1 \leq i \leq k$), but we do not find these necessary for the purpose of constructing a reduction graph. Also, the graph in Figure $17$ can be alternatively achieved through a sequence of partial scalings, by generalizing the process from Figure $15$.
\end{remarks}

Another case when composition is useful for constructing reduction graphs concerns semigroups with three special generators (originally studied in \cite{Pakornrat}):

\begin{example}[Special Case for $3$ Generators]
Let $a, b, c$ be positive integers such that $c \divides \lcm(a, b)$ and $\gcd(a, b, c) = 1$. Let $g_1 := \gcd(a, c)$, $g_2 := \gcd(b, c)$ and note that $\gcd(g_1, g_2) = 1$. Then the numerical semigroup $\langle a, b, c \rangle$ has the following reduction graph built from two scaled binary edges:

\begin{figure}[h]
\centering
\includegraphics[scale = 0.8]{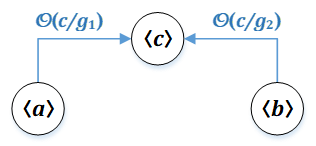}
\caption{Reduction Graph for a Special Triplet}
\end{figure}
\FloatBarrier

Note that $c \divides \gcd(\lcm(a, b), c) = \lcm(g_1, g_2) \divides c$. Therefore, $c = \lcm(g_1, g_2) = g_1g_2$ since $g_1$ and $g_2$ are coprime. Thus the products of the weights of the reduction graph from Figure $17$, call it $G$, is $\frac{c^2}{g_1 g_2} = c = r(G)$. Hence $G$ is a total reduction graph, suitable for Theorem $3.1$ and Corollary $3.1$. The Frobenius number found this way coincides with the original result from $\cite{Pakornrat}$.

Breaking down this special case reveals the simplest possible composition of reduction graphs: a composition by the root of two binary reductions. Indeed, let $a = xg_1$, $b = yg_2$, and let $B_1$, $B_2$ denote the binary reduction graphs from $x$ to $g_2$, respectively $y$ to $g_1$ (note that $\gcd(x, g_2) \divides \gcd(a, c) = g_1$ and $\gcd(y, g_1) \divides \gcd(b, c) = g_2$, so $\gcd(x, g_2) = \gcd(y, g_1) = 1$). In this case, one has $G = B_1 \circ B_2$.
\end{example}

\subsection{Artificial Nodes and Enrichment}

The operations presented in this subsection add new nodes to the structure of a reduction graph:

\begin{definition}[Artificial Node]
Let $V$ be a set of reduction graph nodes, $E$ a set of reduction edges on $V$, and $G := (V, E)$. Suppose that $\langle a_1 \rangle, \langle a_2 \rangle, \ldots, \langle a_n \rangle \in V$, and $b$ is a linear combination of $a_1, \ldots, a_n$ with nonnegative integer coefficients (i.e., $b \in \langle a_1, \ldots, a_n \rangle$). Suppose $G'$ is a reduction graph with $V(G') := V \bigcup \{\langle b \rangle\}$ and $E(G') = E \bigcup E'$, where $E'$ is a collection of edges that connect $\langle b \rangle$ to $G$. Then we say that $\langle b \rangle$ is an \emph{artificial node} added to $G$ in order to create $G'$.
\end{definition}

\begin{remark}
Informally, $G$ may be seen as a possibly incomplete reduction graph (or a reduction graph in the making). The semigroups $S(G)$ (defined as $\sum_{X \in V} X$) and $S(G')$ are equal, since $\langle a_1 \rangle + \ldots + \langle a_n \rangle + \langle b \rangle = \langle a_1 \rangle + \ldots + \langle a_n \rangle$ (this identity is precisely a linear reduction). Therefore, adding artificial nodes can be a useful step in constructing a reduction graph for a studied numerical semigroup, as they raise the possibility of forming more connections between nodes. There are two important corollaries of this method:
\end{remark}

\begin{corollary}[Recursive Formulae of Brauer and Shockley \cite{Frobenius}]
Let $a_1, \ldots, a_n$ be positive integers and $d \divides \gcd(a_1, \ldots, a_{n-1})$. Then one has:
\begin{align}
F(a_1, \ldots, a_n) &= dF\left( 
\frac{a_1}{d}, \ldots, \frac{a_{n-1}}{d}, a_n \right) + a_n(d-1)\\
\nonumber
g(a_1, \ldots, a_n) &= dg\left( 
\frac{a_1}{d}, \ldots, \frac{a_{n-1}}{d}, a_n \right) + \frac{(a_n-1)(d-1)}{2}\\
\nonumber
H_{\langle a_1, \ldots, a_n \rangle}(X) &= H_{\left\langle 
\frac{a_1}{d}, \ldots, \frac{a_{n-1}}{d}, a_n \right\rangle}\left(X^d\right) \cdot \frac{X^{da_n}-1}{X^{a_n}-1}
\end{align}
\end{corollary}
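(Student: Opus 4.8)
The plan is to reduce all three identities to a single statement about Ap\'ery sets --- namely that $\Ap(S, a_n) = d\,\Ap(S', a_n)$, where $S := \langle a_1, \ldots, a_n\rangle$ and $S' := \left\langle \frac{a_1}{d}, \ldots, \frac{a_{n-1}}{d}, a_n\right\rangle$ --- after which everything follows from the elementary relations $(5)$, $(6)$, $(7)$ and Lemma $1.1$, in exactly the same way as in the proof of Corollary $3.1$. First I would record the routine divisibility facts: since $d \mid \gcd(a_1, \ldots, a_{n-1})$ and $\gcd(a_1, \ldots, a_n) = 1$, one has $\gcd(d, a_n) = 1$, and $\gcd\!\left(\frac{a_1}{d}, \ldots, \frac{a_{n-1}}{d}, a_n\right) = 1$ as well, so that $S'$ is again a numerical semigroup and $F(S')$, $g(S')$ are well-defined. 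Writing $a_i = d\cdot\frac{a_i}{d}$ for $i < n$ also gives $dS' = \langle a_1, \ldots, a_{n-1}, da_n\rangle$.

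The crucial step is the Ap\'ery identity, and this is where the artificial-node idea of this subsection enters. Since $da_n \in \langle a_n\rangle$, adjoining the node $\langle da_n\rangle$ is a linear reduction that leaves the semigroup unchanged; hence
\[
S = \langle a_1, \ldots, a_{n-1}, a_n, da_n\rangle = \langle a_n\rangle + \langle a_1, \ldots, a_{n-1}, da_n\rangle = \langle a_n\rangle + dS'.
\]
Scaling relation $(5)$ for $S'$ by $d$ gives $dS' = \langle da_n\rangle \oplus d\,\Ap(S', a_n)$, so, absorbing $\langle da_n\rangle$ back into $\langle a_n\rangle$,
\[
S = \langle a_n\rangle + \langle da_n\rangle + d\,\Ap(S', a_n) = \langle a_n\rangle + d\,\Ap(S', a_n).
\]
This last sum is a \emph{direct} sum: $\Ap(S', a_n)$ meets every residue class modulo $a_n$ exactly once, and multiplication by $d$ permutes these classes since $\gcd(d, a_n) = 1$, so $d\,\Ap(S', a_n)$ also meets every class exactly once. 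Comparing with $S = \langle a_n\rangle \oplus \Ap(S, a_n)$ then forces $\Ap(S, a_n) = d\,\Ap(S', a_n)$. In the terminology of Section $5$, this is the statement that scaling the input of an Ap\'ery reduction by an integer coprime to the Ap\'ery modulus scales its remainder set by that same integer.

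From here the three formulae drop out. For the Frobenius number, $(7)$ gives $F(S) = \max\Ap(S, a_n) - a_n = d\max\Ap(S', a_n) - a_n = d(F(S') + a_n) - a_n = dF(S') + a_n(d-1)$. For the Hilbert series, Lemma $1.1$ applied to $(5)$ gives $H_T(X) = \Gen(\Ap(T, a_n); X)\,/\,(1 - X^{a_n})$ for $T = S$ and $T = S'$; together with $\Gen(dA; X) = \Gen(A; X^d)$ and the Ap\'ery identity this yields $\Gen(\Ap(S, a_n); X) = \Gen(\Ap(S', a_n); X^d) = (1 - X^{da_n})\,H_{S'}(X^d)$, hence $H_S(X) = H_{S'}(X^d)\cdot\frac{X^{da_n}-1}{X^{a_n}-1}$. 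For the genus I would use Selmer's identity $g(T) = \mu(\Ap(T, a_n)) - \frac{a_n-1}{2}$ --- itself a consequence of $(6)$, since each $w \in \Ap(T, a_n)$ contributes $\lfloor w/a_n\rfloor$ gaps --- together with $\mu(d\,\Ap(S', a_n)) = d\,\mu(\Ap(S', a_n))$, which gives $g(S) = dg(S') + \frac{(a_n-1)(d-1)}{2}$ after simplification; alternatively one can feed the Hilbert series formula into $g(S) = \lim_{x\to1}\!\left(\frac{1}{1-x} - H_S(x)\right)$ and repeat the L'H\^opital computation from the proof of Corollary $3.1$.

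The only genuinely delicate point is the Ap\'ery identity --- specifically, recognizing $S$ as $\langle a_n\rangle + dS'$ via the redundant generator $da_n$, and then checking that the sum with $d\,\Ap(S', a_n)$ is direct. Once that is in place, the three identities are pure generating-function bookkeeping of the kind already carried out for Corollary $3.1$.
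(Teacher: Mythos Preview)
Your argument is correct. The key identity $\Ap(S, a_n) = d\,\Ap(S', a_n)$ is established cleanly, and the three formulae follow exactly as you say.

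The paper's proof is close in spirit but organised differently: rather than computing $\Ap(S, a_n)$ directly, it builds a two-edge total reduction graph for $S$ with root $\langle da_n\rangle$ --- one edge being the $d$-scaling of the Ap\'ery reduction for $S'$ (remainder set $dA$, weight $a_n$), and the other a scaled binary edge from $\langle a_n\rangle$ to $\langle da_n\rangle$ (remainder set $a_n\{0,1,\ldots,d-1\}$, weight $d$). This is precisely the composition by the root of the single-edge graph for $S'$ with the binary edge $\langle d\rangle \gets \langle 1\rangle$, and the formulae then fall out of Corollary $3.1$ applied simultaneously to both graphs. Your route instead absorbs $\langle da_n\rangle$ back into $\langle a_n\rangle$, yielding the single Ap\'ery set identity and bypassing the reduction-graph bookkeeping entirely. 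The trade-off: your version is more self-contained and elementary (one does not need Theorem $3.1$ or Corollary $3.1$ at all, only relations $(5)$--$(7)$ and Lemma $1.1$), whereas the paper's version illustrates the general composition machinery of Section $5$ and foreshadows how more elaborate recursive formulae would arise from longer chains of compositions.
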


\begin{proof}
Since $a_1, \ldots, a_n$ are relatively prime, so are $\frac{a_1}{d}, \ldots, \frac{a_{n-1}}{d}$ and $a_n$, thus the latter numbers generate a numerical semigroup $S$. Then $S$ must have some Ap\'ery set in terms of $a_n$, call it $A := \Ap(S, a)$, which is the remainder set of the Ap\'ery reduction edge in Figure $19$ (left):

\begin{figure}[h]
\centering
\includegraphics[scale = 0.85]{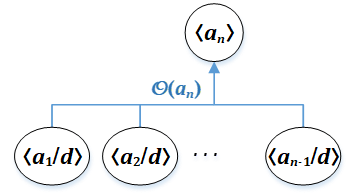}
\hspace{1cm}
\includegraphics[scale = 0.85]{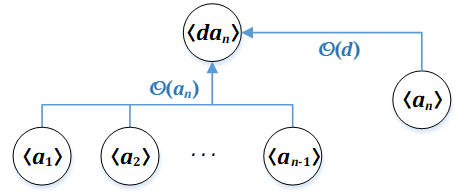}
\caption{An Ap\'ery Reduction (left) vs. Recursive Reduction Graph (right)}
\end{figure}
\FloatBarrier

Consequently, the remainder set of the corresponding scaled edge in Figure $19$ (right) is $dA$; the other edge is just a scaled binary reduction with remainder set $a_n\{0, 1, \ldots, d-1\}$, due to $(15)$. We note that the graph on the right is total, and describes the numerical semigroup $\langle a_1, \ldots, a_n \rangle$ by adding the artificial node $\langle da_n \rangle$ (which is a linear combination of $a_n$ alone). In particular, this graph is precisely the composition by the root of the graph on the left and the binary edge $\langle d \rangle \gets \langle 1 \rangle$. The results now follow easily by applying relations $(37)$, $(38)$ and $(39)$ to both graphs in Figure $19$ and phrasing everything in terms of $a_1, \ldots, a_n, d$ and $A$.
\end{proof}

\begin{remark}
Corollary $5.1$ works for any common divisor $d$ of $a_1, \ldots, a_{n-1}$, although these results are often phrased only for $d = \gcd(a_1, \ldots, a_{n-1})$ \cite{Triangular, Tripathi}. In particular, $d = 1$ gives trivial equalities.
\end{remark}

\begin{corollary}[Linear-Binary Reduction]
Let $k$, $a_1, a_2, \ldots, a_k$, and $b$ be positive integers and let $c := \frac{\gcd(a_1, \ldots, a_k)}{\gcd(a_1, \ldots, a_k, b)}$. Suppose that $bc \in \langle a_1, \ldots, a_k \rangle$. Then we can combine the scaled binary edge from $\langle b \rangle$ to $\langle bc \rangle$ (see $(71)$) with the linear edge from $\langle bc \rangle$ to $\langle a_1 \rangle, \langle a_2 \rangle, \ldots, \langle a_k \rangle$ in order to form a so-called \emph{binary-linear reduction edge} from $\langle b \rangle$ to $\langle a_1 \rangle, \langle a_2 \rangle, \ldots, \langle a_k \rangle$. More precisely, to the initial configuration of nodes $\langle a_1 \rangle, \ldots, \langle a_k \rangle, \langle b \rangle$, one can add an artificial node $\langle bc \rangle$ since $bc \in \langle b \rangle$, then connect it to the other nodes, and then eliminate it by merging the two edges into one:

\begin{figure}[h]
\centering
\includegraphics[scale = 0.78]{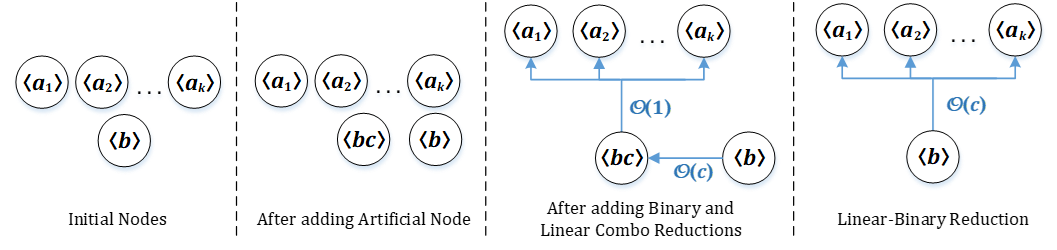}
\caption{Origin of the Linear-Binary Reduction, with $c := \frac{\gcd(a_1, \ldots, a_k)}{\gcd(a_1, \ldots, a_k, b)}$, $bc \in \langle a_1, \ldots, a_k \rangle$}
\end{figure}
\FloatBarrier

\begin{proof}
Formally, by combining the binary reduction $\langle bc \rangle + \langle b \rangle = \langle bc \rangle \oplus \bigO(c)$ with the linear reduction $\langle a_1 \rangle + \ldots + \langle a_n \rangle + \langle bc \rangle = \langle a_1 \rangle + \ldots + \langle a_n \rangle$, we obtain the \emph{linear-binary} reduction:
\begin{equation}
\begin{split}
\langle a_1 \rangle + \ldots + \langle a_k \rangle + \langle b \rangle
&= \langle a_1 \rangle + \ldots + \langle a_k \rangle + \langle bc \rangle + \langle b \rangle \\
&= \langle a_1 \rangle + \ldots + \langle a_k \rangle + \langle bc \rangle + \bigO(c) \\
&= \langle a_1 \rangle + \ldots + \langle a_k \rangle + \bigO(c) \\
&= \left(\langle a_1 \rangle + \ldots + \langle a_k \rangle\right) \oplus \bigO(c)
\end{split}
\end{equation}

It remains to motivate the direct sum in the RHS of $(77)$. Note that the remainder $\bigO(c)$ from the reduction $\langle bc \rangle + \langle b \rangle = \langle bc \rangle \oplus \bigO(c)$ represents the set $\{br : 0 \leq r < c \}$. On the other hand, $c$ is by definition coprime with $b$, so $\{br : 0 \leq r < c \}$ attains each residue class modulo $c$ exactly once. Since $c \divides a_1, \ldots, a_k$, there will be no overlap in the sum $\langle a_1, \ldots, a_k \rangle + \{br : 0 \leq r < c \}$, so the latter is a direct sum.
\end{proof}

\end{corollary}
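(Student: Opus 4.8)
The plan is to realize the claimed edge exactly as its name suggests: as a \emph{composition} of two edges we already possess, a scaled binary edge and a linear edge, glued at the artificial node $\langle bc \rangle$. Since $c$ is a positive integer, $bc$ is a nonnegative multiple of $b$, so $bc \in \langle b \rangle$; hence adjoining $\langle bc \rangle$ to the configuration $\langle a_1 \rangle, \ldots, \langle a_k \rangle, \langle b \rangle$ is a legitimate instance of Definition 5.3 and leaves the underlying sum unchanged. To this enlarged collection I would attach: (i) the scaled binary edge from $\langle b \rangle$ to $\langle bc \rangle$, obtained by scaling relation $(71)$ by $b$, which reads $\langle bc \rangle + \langle b \rangle = \langle bc \rangle \oplus \{\, br : 0 \le r < c \,\}$ and has weight $c$; and (ii) a linear edge from $\langle bc \rangle$ to $\langle a_1 \rangle, \ldots, \langle a_k \rangle$, valid precisely because the hypothesis $bc \in \langle a_1, \ldots, a_k \rangle$ says $bc$ is a nonnegative integer combination of $a_1, \ldots, a_k$ (Corollary 4.1), so its remainder set is $\{0\}$.

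Next I would merge these two edges. Running the deconstruction algorithm from the proof of Proposition 3.1 on this little graph, the node $\langle b \rangle$ has indegree $0$, so the binary edge fires first, replacing $\langle b \rangle$ by $\{\, br : 0 \le r < c \,\}$; then $\langle bc \rangle$ has indegree $0$, so the linear edge fires, replacing $\langle bc \rangle$ by $\{0\}$. The net effect is the single relation
\begin{equation*}
\langle a_1 \rangle + \ldots + \langle a_k \rangle + \langle b \rangle = \bigl(\langle a_1 \rangle + \ldots + \langle a_k \rangle\bigr) + \{\, br : 0 \le r < c \,\},
\end{equation*}
i.e.\ a single edge from $\langle b \rangle$ to $\langle a_1 \rangle, \ldots, \langle a_k \rangle$ whose remainder set has cardinality $c$. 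Equivalently, without invoking the algorithm, the plain set equality is immediate: for $s \in \langle a_1, \ldots, a_k \rangle$ and $t \ge 0$, writing $t = cq + r$ with $0 \le r < c$ gives $s + bt = (s + bcq) + br$, and $s + bcq \in \langle a_1, \ldots, a_k \rangle$ because $bc \in \langle a_1, \ldots, a_k \rangle$; the reverse inclusion is trivial.

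The substantive point, and the step I expect to be the real obstacle, is verifying that the sum on the right is a \emph{direct} sum over $\langle a_1, \ldots, a_k \rangle$ — otherwise the merged relation is not a valid reduction edge and Theorem 3.1 cannot be applied downstream. Here I would work modulo $d := \gcd(a_1, \ldots, a_k)$ rather than modulo $c$: every element of $\langle a_1, \ldots, a_k \rangle$ is divisible by $d$, so if $s + br_1 = s' + br_2$ with $s, s' \in \langle a_1, \ldots, a_k \rangle$ and $0 \le r_1, r_2 < c$, reducing modulo $d$ gives $d \mid b(r_1 - r_2)$. Writing $d' := \gcd(a_1, \ldots, a_k, b) = \gcd(d, b)$, so that $c = d/d'$, and dividing by $d'$ yields $c \mid (b/d')(r_1 - r_2)$; since $\gcd(c, b/d') = \gcd(d/d', b/d') = 1$, this forces $c \mid r_1 - r_2$, hence $r_1 = r_2$ as $|r_1 - r_2| < c$, and then $s = s'$. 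So the sum is direct with remainder set of cardinality $c$, which is exactly the linear-binary reduction. The booby-trap to watch for in this last paragraph is that it is $\gcd(c, b/d')$, not $\gcd(c, b)$, that equals $1$ in general: the set $\{\, br : 0 \le r < c \,\}$ need not be a complete residue system modulo $c$, but it is a set of $c$ distinct residues modulo $d$, which is all the direct sum with $\langle a_1, \ldots, a_k \rangle \subseteq d\,\N$ actually requires.
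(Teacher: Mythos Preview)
Your proposal is correct and follows the same overall architecture as the paper's proof: insert the artificial node $\langle bc \rangle$, combine the scaled binary edge $\langle bc \rangle + \langle b \rangle = \langle bc \rangle \oplus \{br : 0 \le r < c\}$ with the linear edge $\langle a_1 \rangle + \cdots + \langle a_k \rangle + \langle bc \rangle = \langle a_1 \rangle + \cdots + \langle a_k \rangle$, and then check that the resulting sum is direct.

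Where you diverge is in the verification of directness, and your version is in fact the more careful one. The paper argues modulo $c$: it asserts that ``$c$ is by definition coprime with $b$'', concludes that $\{br : 0 \le r < c\}$ is a complete residue system modulo $c$, and uses $c \mid a_i$ to finish. But the coprimality claim is false in general: with $k = 1$, $a_1 = 4$, $b = 2$ one gets $d = 4$, $d' = \gcd(4,2) = 2$, $c = 2$, and $\gcd(c,b) = 2$; here $\{0, 2\}$ is \emph{not} a complete residue system modulo $c = 2$. Your argument sidesteps this by working modulo $d = \gcd(a_1, \ldots, a_k)$ instead: the elements $br$ for $0 \le r < c$ are pairwise distinct modulo $d$ because $\gcd(d/d',\, b/d') = 1$, and that is all one needs since $\langle a_1, \ldots, a_k \rangle \subseteq d\,\N$. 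Your closing remark that ``it is $\gcd(c, b/d')$, not $\gcd(c, b)$, that equals $1$ in general'' identifies exactly the gap in the paper's argument, so keep that observation.
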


\begin{remark}
A numerical semigroup is graph-solvable using only linear and (scaled) binary reductions if and only if it is \emph{free} \cite{Telescopic}. A numerical semigroup is called free iff it is generated by a so-called \emph{telescopic sequence} $a_1, \ldots, a_n$, defined by the property:
\begin{equation}
\frac{a_{k+1}}{\gcd(a_1, \ldots, a_{k+1})} \in \left\langle \frac{a_1}{\gcd(a_1, \ldots, a_{k})}, \ldots, \frac{a_k}{\gcd(a_1, \ldots, a_k)} \right\rangle,
\end{equation}

for each $1 \leq k < n$. Indeed, letting $b := a_{k+1}$ and $c := \frac{\gcd(a_1, \ldots, a_k)}{\gcd(a_1, \ldots, a_k, b)}$, the condition in $(78)$ is equivalent to $bc \in \langle a_1, \ldots, a_k \rangle$, which is the necessary condition for linear-binary reductions. Therefore, every telescopic sequence has a corresponding reduction graph which is simply a chain of linear-binary reductions (where $c_k := \frac{\gcd(a_1, \ldots, a_k)}{\gcd(a_1, \ldots, a_k, a_{k+1})}$):

\begin{figure}[h]
\centering
\includegraphics[scale = 0.95]{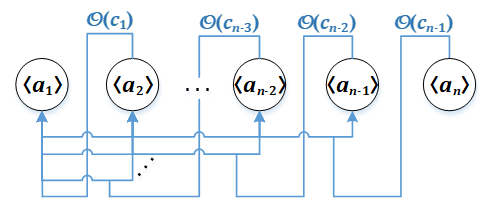}
\caption{Reduction Graph for Telescopic Sequences}
\end{figure}
\FloatBarrier

We note that the balance of this graph is $\frac{a_1}{c_2\ldots c_k} = a_1 \cdot \left(\frac{\gcd(a_1)}{\gcd(a_1, a_2)} \cdot \ldots \cdot \frac{\gcd(a_1, \ldots, a_{n-1})}{\gcd(a_1, \ldots, a_n)}\right)^{-1} = 1$, since $\gcd(a_1, \ldots, a_n) = 1$ by the fact that $\langle a_1, \ldots, a_n \rangle$ is a numerical semigroup. Hence free semigroups are graph-solvable using only linear and binary reduction edges, which are both symmetric, thus:
\end{remark}

\begin{corollary}
Free numerical semigroups are symmetric.
\end{corollary}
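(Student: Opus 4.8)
The plan is to exhibit a \emph{total} reduction graph for a free numerical semigroup all of whose edges are symmetric, and then conclude by Corollary 3.2. By definition, a free numerical semigroup is $S = \langle a_1, \ldots, a_n \rangle$ for some telescopic sequence $a_1, \ldots, a_n$, i.e. one satisfying $(78)$. The first step is to unwind the telescopic condition exactly as in the Remark preceding this corollary: setting $c_k := \frac{\gcd(a_1, \ldots, a_k)}{\gcd(a_1, \ldots, a_k, a_{k+1})}$, relation $(78)$ is equivalent to $a_{k+1} c_k \in \langle a_1, \ldots, a_k \rangle$, which is precisely the hypothesis required by Corollary 5.2 to form a linear-binary reduction edge from $\langle a_{k+1} \rangle$ to $\langle a_1 \rangle, \ldots, \langle a_k \rangle$. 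Chaining these $n-1$ edges produces the graph $G$ of Figure 21, with $S(G) = \langle a_1, \ldots, a_n \rangle$ and root $\langle a_1 \rangle$; since $\gcd(a_1, \ldots, a_n) = 1$, the product of the edge weights $\prod_{k=1}^{n-1} c_k$ telescopes to $\gcd(a_1)/\gcd(a_1, \ldots, a_n) = a_1$, so $\Bal(G) = a_1 / a_1 = 1$ and $G$ is a total reduction graph; in particular $S$ is graph-solvable via $G$.

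Next I would check that every edge of $G$ is symmetric. Each edge is a linear-binary reduction which, by the proof of Corollary 5.2, arises by merging a scaled binary reduction $\langle a_{k+1} c_k \rangle + \langle a_{k+1} \rangle = \langle a_{k+1} c_k \rangle \oplus \{a_{k+1} r : 0 \le r < c_k\}$ with a linear reduction $\langle a_1 \rangle + \ldots + \langle a_k \rangle + \langle a_{k+1} c_k \rangle = \langle a_1 \rangle + \ldots + \langle a_k \rangle$. Linear reductions have remainder set $\{0\}$ and hence asymmetry $0$ (Corollary 4.1); (scaled) binary reductions are symmetric (Corollary 4.3 together with Lemma 5.1). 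Equivalently, the remainder set of the merged edge is just $\{a_{k+1} r : 0 \le r < c_k\}$, for which $2\mu - \max = 0$ by the same computation as in Corollary 4.3. Either way, $A(e) = 0$ for each $e \in E(G)$, so $A(G) = \sum_{e \in E(G)} A(e) = 0$.

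Finally I would invoke Corollary 3.2 (equivalently, Definition 5.3 with relations $(37)$ and $(39)$): a numerical semigroup graph-solvable via a reduction graph of asymmetry $0$ is symmetric. Since $S$ is graph-solvable via the total graph $G$ with $A(G) = 0$, it follows that $S$ is symmetric, as claimed.

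I do not anticipate a genuine obstacle: once the linear-binary-reduction chain of Figure 21 is in place, the statement is an immediate consequence of the additivity of asymmetry over the edges of a total reduction graph. The only points that require care are already settled in the Remark preceding the corollary — namely that the chain in Figure 21 is a legitimate reduction graph in the sense of Definition 3.1 (acyclic, a single monogenic root $\langle a_1 \rangle$, finitely many edges) and that its balance is exactly $1$, which rests on the telescoping identity $\prod_{k=1}^{n-1} c_k = a_1/\gcd(a_1, \ldots, a_n) = a_1$. Everything else is bookkeeping.
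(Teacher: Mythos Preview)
Your proposal is correct and follows essentially the same approach as the paper: build the total reduction graph of Figure~21 from linear-binary edges, note that each such edge has remainder set $\{a_{k+1}r : 0 \le r < c_k\}$ (a scaled binary remainder) and hence asymmetry~$0$, and conclude via Corollary~3.2. The paper's own argument is just the one-line observation preceding the corollary that free semigroups are graph-solvable using only linear and (scaled) binary edges, both of which are symmetric; your write-up simply expands this with the explicit balance computation and the edge-by-edge asymmetry check.
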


Conversely, if a numerical semigroup $S$ is graph-solvable using only linear and (scaled) binary reduction edges, then it is also graph-solvable using linear-binary reductions (since both linear and scaled binary reductions are particular cases of linear-binary reductions: take $k = 1$, respectively $c = 1$ in Figure $20$). Then by selecting, at each step, a node that has no outgoing path towards the previously selected nodes (which is possible since reduction graphs are acyclic), we have to end up with the picture from Figure $21$ (we may need to add a few outputs to the linear-binary reductions, but this is always allowed). Therefore, $S$ is free.

\begin{example}[Triangular and Tetrahedral numbers]
In \cite{Triangular}, the authors investigate semigroups generated by sequences of $3$ consecutive triangular numbers and $4$ consecutive tetrahedral numbers. Both semigroups turn out to be numerical and free after an analysis of 2, respectively 6 cases. In particular, the case of triangular numbers only requires (scaled) binary reductions:

\begin{figure}[h]
\centering
\includegraphics[scale = 0.85]{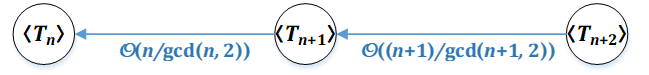}
\caption{Reduction Graph for Triplets of Triangular Numbers}
\end{figure}
\FloatBarrier

Above, we denoted $T_n := \frac{n(n+1)}{2}$ and we used that $\frac{T_n}{\gcd(T_n, T_{n+1})} = \frac{n}{\gcd(n, n+2)} = \frac{n}{\gcd(n, 2)}$. We note that the graph above has balance $1$ because $\gcd(n, 2) \cdot \gcd(n+1, 2) = 2 = \frac{T_n}{n(n+1)}$. Building a reduction graph for tetrahedral numbers, on the other hand, requires at least one linear reduction (and hence a linear-binary reduction); we exemplify this for the case $n \equiv 0\q (\text{mod } 6)$:

\begin{figure}[h]
\centering
\includegraphics[scale = 0.82]{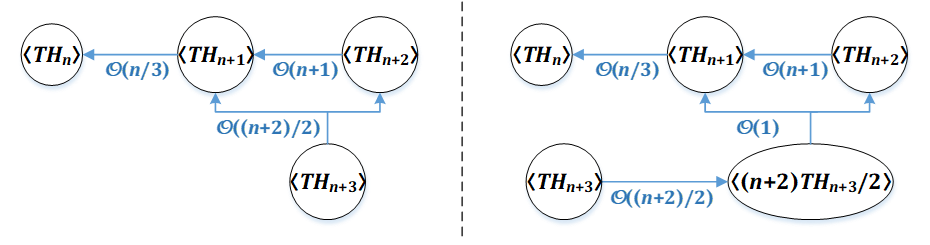}
\caption{Reduction Graphs for Quadruplets of Tetrahedral Numbers, $n \equiv 0\q (\text{mod } 6)$}
\end{figure}
\FloatBarrier

Here, we used the notation $\text{\emph{TH}}_{n} := \frac{n(n+1)(n+2)}{6}$ for the $n^{th}$ tetrahedral number. The graph on the right of Figure $23$ shows a more detailed version of the graph on the left, in which the linear-binary reduction edge is replaced by a linear and a (scaled) binary reduction edge. The weights of the scaled binary reductions used can be verified easily given that $n$ is divisible by $6$, and the linear reduction is motivated by the equality $
\frac{n+4}{2} \text{\emph{TH}}_{n+1} + 2\text{\emph{TH}}_{n+2} = \frac{n+2}{2} \text{\emph{TH}}_{n+3}$.
\end{example}

\begin{remark}
The graph on the right of Figure $23$ is more complex, but in some ways also more useful than the graph on the left. Indeed, by splitting the linear-binary edge into a linear one and a binary one, we open the possibility of \emph{enriching} the binary edge:
\end{remark}

\begin{definition}[Enrichment]
An \emph{enrichment} of a reduction graph $G$ is an extension of $V(G)$ by a set of nodes $V'$ and a replacement of a reduction edge $e \in E(G)$ by another edge $e'$ such that:
\begin{enumerate}
\item $w(e) = w(e')$;
\item All outputs of $e'$ are outputs of $e$, and all inputs of $e$ are inputs of $e'$;
\item $V'$ is the set of inputs of $e'$ that are not inputs of $e$.
\end{enumerate}
\end{definition}

\begin{lemma}
The result of enriching a reduction graph is a reduction graph with the same balance.
\end{lemma}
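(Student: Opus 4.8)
The plan is to check the result of the enrichment against Definition 3.1 condition by condition, and then to read off the balance directly from its definition. Note first that the notion of enrichment already builds in that $e'$ is a genuine reduction edge with $w(e')=w(e)$; in particular its inputs and outputs are again nonempty sets of nonnegative integers containing $0$, so the first requirement of Definition 3.1 is inherited by $V(G')=V(G)\cup V'$, and the second one (edges are reduction edges, and $E(G')=(E(G)\setminus\{e\})\cup\{e'\}$ is finite) is immediate. So the only real content lies in the third requirement: out-degrees, the root, and acyclicity.

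For the out-degrees I would use items $2$ and $3$ of Definition 5.4. A node already in $V(G)$ is an input of $e'$ exactly when it is an input of $e$ (the extra inputs of $e'$ are precisely the new nodes $V'$, which do not lie in $V(G)$), while every output of $e'$ is an output of $e$; since replacing $e$ by $e'$ is the only modification of the edge set, each node of $V(G)$ retains its old out-degree. Hence the root $\langle r(G)\rangle$ still has out-degree $0$ and is still a monogenic semigroup, every other node of $V(G)$ still has out-degree $1$, and each node of $V'$ is an input of the single edge $e'$ and of no other edge, so it has out-degree $1$. Thus $G'$ has a unique out-degree-$0$ node, which is its root, and $r(G')=r(G)$.

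For acyclicity I would first argue that the new nodes $V'$ have in-degree $0$: they lie outside $V(G)$, so they are outputs of no edge of $E(G)\setminus\{e\}$, and they are outputs of $e'$ neither, since every output of $e'$ is an output of $e$ and hence lies in $V(G)$. An in-degree-$0$ node cannot lie on a directed cycle, so any directed cycle of $G'$ would be contained in $V(G)$; but on $V(G)$ the edge $e'$ merely links (some of) the inputs of $e$ to (some of) its outputs, so every directed path of $G'$ among old nodes induces a directed path of $G$ using the same old vertices with $e'$ replaced by $e$, contradicting that $G$ is acyclic. Hence $G'$ is acyclic, all three requirements of Definition 3.1 hold, and $G'$ is a reduction graph. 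Finally, since $r(G')=r(G)$ and $\prod_{f\in E(G')}w(f)=w(e')\prod_{f\in E(G)\setminus\{e\}}w(f)=w(e)\prod_{f\in E(G)\setminus\{e\}}w(f)=\prod_{f\in E(G)}w(f)$, the definition of balance (Definition 3.2) gives $\Bal(G')=\Bal(G)$.

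The only place where a little attention is needed is the acyclicity check: one must observe that $e'$, restricted to the old vertex set, is no stronger than $e$ (it has the same old inputs and a subset of the outputs) and that the fresh vertices of $V'$ are sources, so that the enrichment cannot create a new directed cycle. Everything else is a routine substitution into the definitions.
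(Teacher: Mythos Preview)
Your proof is correct and follows essentially the same approach as the paper's: verify the conditions of Definition~3.1 (in particular the out-degree condition and acyclicity) and then read off the balance from $r(G')=r(G)$ and $w(e')=w(e)$. Your treatment is more detailed than the paper's---especially in the acyclicity step, where the paper simply notes that all outputs of $e'$ are outputs of $e$, while you explicitly observe that the new nodes in $V'$ are sources and that any cycle among old nodes lifts to a cycle in $G$---but the underlying argument is the same.
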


\begin{proof}
Preserve the notations from Definition $5.4$, and let $G'$ be the enriched graph. Supposing that $G'$ is a reduction graph, the fact that $\Bal(G) = \Bal(G')$ is clear since $r(G) = r(G')$ and $w(e) = w(e')$. It remains to show that $G'$ is a reduction graph, in particular that all nodes except for the root of $G'$ have outdegree equal to $1$. Firstly, the outdegrees of the nodes in $V(G)$, which include the root of $G'$, are not affected by the enrichment since all inputs of $e$ are inputs of $e'$. Secondly, the outdegree of each node in $V'$ is $1$, since it is only connected to the rest of the graph through $e'$. Lastly, $G'$ is acyclic because $G$ is acyclic and all outputs of $e'$ are outputs of $e$. The other conditions of Definition $3.1$ are easily verified. 
\end{proof}

\begin{remark}
Usually, the following two scenarios can occur for $e$ and $e'$:
\begin{itemize}
\item $e$ is a (scaled) binary edge from $b$ to $a$, and $e'$ is a (scaled) (modified) arithmetic edge with output $a$ and inputs: $ha + (b - ha) = b$, $b + (b-ha)$, $b + 2(b-ha)$, etc., for some $h \geq 1$. The arithmetic progression giving the nodes of $e'$ may be both finite or infinite. We note that $w(e) = w(e')$ since $\gcd(a, b) = \gcd(a, b-ha)$.

\item $e$ is a modified arithmetic edge with $k \in \N$ inputs, and $e'$ is the same edge but with more (possibly infinitely many) inputs.
\end{itemize}

We note that enrichment does not necessarily make a reduction graph better or more general, since it may be a more challenging problem to study a numerical semigroup generated by fewer numbers; in particular, an enrichment which adds the node $\langle 1 \rangle$ to a reduction graph $G$ turns $S(G)$ into $\langle 1 \rangle$, which is not very interesting. Unlike artificial nodes, enrichment should be used to find new graph-solvable semigroups rather than to study a pre-established semigroup.
\end{remark}

\begin{example}
The reduction graph in Figure $22$ contains two (scaled) binary edges, which can be enriched to obtain two (scaled) arithmetic edges. Indeed, given $p, q \in \N \cup \{\infty\}$, one has the total reduction graph:

\begin{figure}[h]
\centering
\includegraphics[scale = 0.8]{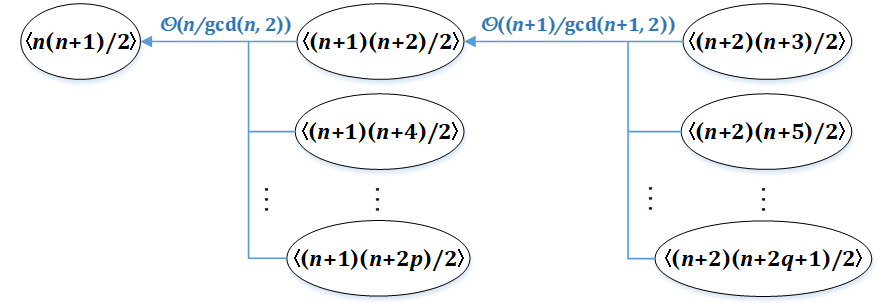}
\caption{Reduction Graph for Extended Triangular Numbers}
\end{figure}
\FloatBarrier

A similar enrichment can be applied to the case of tetrahedral numbers (in particular, Figure $23$, right). Example $5.6$ will be essential for proving Theorem $1.3$, in the next section. Other examples of enrichment will be presented within the proofs of Theorems $1.1$ and $1.2$.
\end{example}

\section{New Classes of Graph-Solvable Numerical Semigroups}

Here we prove Theorems $1.1$ to $1.5$, using the techniques developed in the previous sections:

\begin{myproof1}
We start with the case $d = b$, when the semigroups we wish to study become $S_1 = \left\langle a^n, a^n + a^{n-1}b, \ldots, a^n + ab^{n-1},  a^n + b^n \right\rangle$ (when $a \geq b$), and $S_2 = \left\langle a^n, a^n + a^{n-1}b, \ldots, a^n + a^{n-1}b + \ldots + b^n \right\rangle$. Observe that both semigroups are numerical since $\gcd(a, b) = 1$, and that for $1 \leq i \leq 2$, $S_i$ has the form $\left\langle a^n, a^n + A_{i,1}, a^n + A_{i,2}, \ldots, a^n + A_{i,n} \right\rangle$, where $A_{i,1} = a^{n-1}b$ and $A_{i,2}, \ldots, A_{i,n}$ are certain positive integers depending on $a, b, n$. It turns out that each $S_i$ is a free numerical semigroup, graph-solvable using one binary edge and $n-1$ linear-binary edges illustrated in the figure below (\emph{for now, disregard the interrupted lines}):

\begin{figure}[h]
\centering
\includegraphics[scale = 0.82]{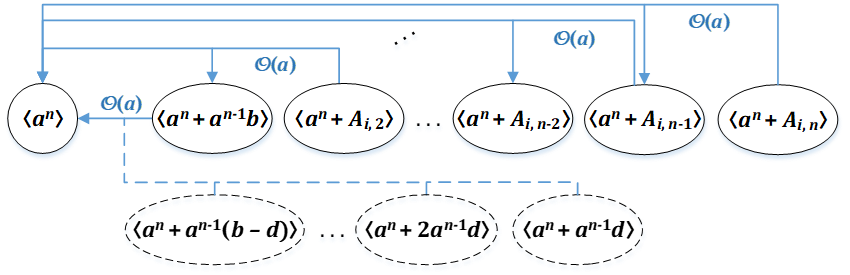}
\caption{Reduction Graph for Arithmetic-Geometric Sums}
\end{figure}
\FloatBarrier

Let us verify the linear-binary reduction edges used above, the $k^{th}$ of which has input $A_{i, k+1}$ and outputs $a^n$, $a^n + A_{i, k}$, for $1 \leq k \leq n-1$. According to the rules in Figure $20$, one can check that:
\begin{equation}
\frac{\gcd\left(a^n, a^n + A_{i, k}\right)}{\gcd\left(a^n, a^n + A_{i, k}, a^n + A_{i, k+1}\right)} = \frac{a^{n-k}}{a^{n-k-1}} = a,
\end{equation}
\begin{equation}
aA_{i, k+1} = M_i a^n + N_i A_{i, k} \in \langle a^n, A_{i, k}\rangle,
\end{equation}

for some nonnegative integers $M_i$ and $N_i$. Indeed, one can take $M_1 = a-b$ and $N_1 = b$ (recall that $a \geq b$ in this case), respectively $M_2 = a$ and $N_2 = b$. Hence the linear-binary reductions are valid. We note that the product of the weights used in Figure $25$ is $a^n$, which coincides with the root generator, hence the illustrated reduction graph has balance $1$. Since the two semigroups are numerical, Theorem $3.1$ applies.

The next step is to enrich the binary edge from $a^n + a^{n-1}b$ to $a^n$ in order to form a scaled arithmetic edge with common difference $a^{n-1}d$ for some $d \divides b$, illustrated with interrupted lines in Figure $25$. Note that $\gcd(a, d) = \gcd(a, b) = 1$, so the weight of the reduction is preserved and the reduction graph remains total. In this final version, the generators of the studied semigroup are precisely those listed in Theorem $1.1$, where $S_1$ and $S_2$ receive the additional generators: $a^n + a^{n-1}d, a^n + 2a^{n-1}d, \ldots, a^n + a^{n-1}(b-d)$.

It remains to compute the Frobenius numbers of $S_1$ and $S_2$, using Corollary $3.1$. Let $e_0$ denote the arithmetic edge with output $a^n$ and $\frac{b}{d}$ inputs of common difference $a^{n-1}d$, and let $e_{i,k}$ denote the linear-binary edge described by $(79)$ and $(80)$ for the semigroup $S_i$, where $1 \leq i \leq 2$ and $1 \leq k \leq n-1$. Then by relations $(37)$, $(61)$ and $(15)$ (accounting for scalings), we have:
\begin{equation}
\begin{split}
F(S_i) &= -a^n + \max(\Rem(e_0)) + \sum_{k = 1}^{n-1} \max(\Rem(e_{i, k})) \\
&= -a^n + a^{n-1}\left(a\lc \frac{(a-1)d}{b} \rc + d(a-1)\right) + \sum_{k = 1}^{n-1} A_{i, k+1}(a-1)
\end{split}
\end{equation}

This completes the proof of Theorem $1.1$, up to the computation of the sums $\sum_{k = 1}^{n-1} A_{i, k+1}$ for $1 \leq i \leq 2$, which we skip here since it is just a matter of summing geometric series.
\end{myproof1}

\begin{remark}
The case $d = b = 1$, $a = 2$ of Theorem $1.1$ yields a semigroup generated by the sequence $2^n, 2^n + 2^0, 2^n + 2^1, \ldots, 2^n + 2^{n-1}$, which leads to an interesting generalization:
\end{remark}

\begin{myproof2}
Let $0 \leq k \leq \nu_2(n)$. We will simultaneously find reduction graphs for the semigroups $S_k := \langle n, n+2^0, \ldots, n+2^k \rangle$ and $S_{k+1} := \langle n, n+2^0, \ldots, n+2^{k+1} \rangle$, such that the latter is an enrichment by an arithmetic edge of the former. The two graphs are illustrated below in a single figure, where the interrupted lines constitute the enrichment:

\begin{figure}[h]
\centering
\includegraphics[scale = 0.85]{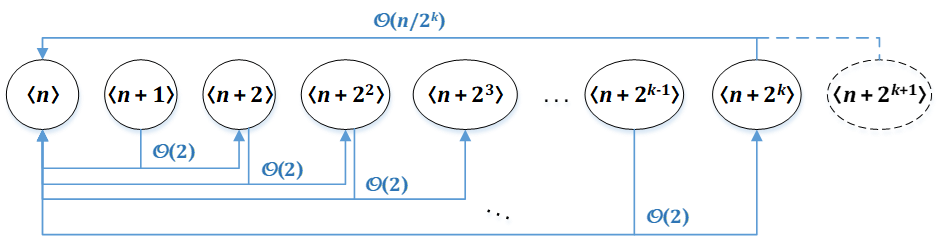}
\caption{Reduction Graph for Shifted Powers of $2$}
\end{figure}
\FloatBarrier

The figure uses $k$ linear-binary edges motivated by $2(n + 2^i) = n + (n + 2^{i+1})$ for $0 \leq i \leq k-1$, and one (scaled) binary reduction edge from $n + 2^k$ ro $n$, which can be enriched to obtain a modified arithmetic reduction. The balance of both reduction graphs is $\frac{2^k}{2^k} = 1$, and they both describe numerical semigroups ($S_k$, respectively $S_{k+1}$) since $\gcd(n, n+1) = 1$. Hence, we can apply relations $(37)$, $(61)$ and $(15)$ (accounting for scalings) to find the Frobenius numbers of $S_k$ and $S_{k+1}$:
\begin{align}
F(S_k) &= -n + (n+1) + (n+2) + \ldots + (n+2^{k-1}) + \left(n+2^k\right)\left(\frac{n}{2^k} - 1\right) \\
F(S_{k+1}) &= -n + (n+1) + (n+2) + \ldots + (n+2^{k-1}) + \left(n\lc \frac{\frac{n}{2^k} - 1}{2} \rc + n - 2^k\right)
\end{align}

Since $k \leq \nu_2(n)$, we know that $\frac{n}{2^k}$ is an integer, which is odd if and only if $k = \nu_2(n)$. Therefore, the ceiling involved in relation $(83)$ equals $\frac{n - 2^k}{2^{k+1}}$ if $k = \nu_2(n)$, and $\frac{n}{2^{k+1}}$ otherwise.

Now suppose $0 \leq k \leq \nu_2(n)$. One can check that relation $(82)$ produces the same result as relation $(83)$ after the substitution $k \to k-1$ (when $k \neq 0$), which is that:
\begin{equation}
F(S_k) = \frac{n^2}{2^k} + (k-1)n - 1
\end{equation}

In the remaining case when $k = \nu_2(n) + 1$, our only option is to use relation $(83)$ after the same substitution $k \to k-1$, in order to compute that:
\begin{equation}
F(S_k) = \frac{n^2}{2^k} + \left(k - \frac{3}{2}\right)n - 1
\end{equation}
\end{myproof2}

\begin{remarks}
One way to generalize Theorem $1.2$ is to scale each generator $n + 2^i$, for $0 \leq i \leq k$, by some odd positive integer $a_i$ such that $a_k \divides a_{k-1} \divides \ldots \divides a_0$ and $\gcd(a_0, n) = 1$. We mention that if $k = \nu_2(n) + 1$, one must also require that $a_k = a_{k - 1}$. Obtaining this more general result is a quick application of partial scaling; the structure of the reduction graph is identical to that from Figure $26$, and the computations are left to the reader.
\end{remarks}

\begin{myproof3}
Let $n \geq 1$, $k \geq 3$, and consider the semigroup $S := \left\langle \left\{ \frac{(n+i)(n+i\% 2+1)}{2} : 0 \leq i \leq k \right\}\right\rangle$, where $i\% 2 \in \{0, 1\}$ such that $i \equiv i\%2 \text{ (mod $2$)}$. It can be checked \cite{Triangular} that the first $4$ generators of this semigroup (given by $0 \leq i \leq 3$) have greatest common divisor $1$, hence $S$ is numerical. 

Moreover, $S$ is described by the total reduction graph from Figure $24$, when $p := \lf \frac{k}{2} \rf$ and $q := \lf \frac{k-1}{2} \rf$; we use these notations henceforth. It remains to apply Corollary $3.1$, in particular relations $(37)$ and $(61)$, to compute the Frobenius number of $S$ in two cases:

\textbf{\emph{Case} 1.} $n$ is even. Then the weights of the two arithmetic edges from Figure $24$ become $\frac{n}{2}$, respectively $n+1$, so by accounting for scaling we obtain the following:
\begin{equation}
\begin{split}
F(S) &= - \frac{n(n+1)}{2} + (n+1)\left( \frac{n}{2} \lc \frac{\frac{n}{2} - 1}{p} \rc + \frac{n}{2} - 1 \right) + \frac{n+2}{2}\left( (n+1) \lc \frac{n+1 - 1}{q} \rc + 2(n+1 - 1) \right) \\
&= \lc \frac{n - 2}{2p} \rc T_n + \lc \frac{n}{q} \rc T_{n+1} + n^2 + n - 1
\end{split}
\end{equation}

Recall that we are using the notation $T_n = \frac{n(n+1)}{2}$ for the $n^{th}$ triangular number.

\textbf{\emph{Case} 2.} $n$ is odd. Then the weights of the arithmetic edges from Figure $24$ become $n$ and respectively $\frac{n+1}{2}$. Using relations $(37)$ and $(61)$ adjusted for scaling, we get that:
\begin{equation}
\begin{split}
F(S) &= - \frac{n(n+1)}{2} + \frac{n+1}{2}\left( n \lc \frac{n - 1}{p} \rc + 2(n - 1) \right) + (n+2)\left( \frac{n+1}{2} \lc \frac{\frac{n+1}{2} - 1}{q} \rc + \frac{n+1}{2} - 1 \right) \\
&= \lc \frac{n - 1}{p} \rc T_n + \lc \frac{n-1}{2q} \rc T_{n+1} + n^2 - 2
\end{split}
\end{equation}
\end{myproof3}

\begin{remark}
Since the reduction graph used for Theorem $1.3$ only contains two edges, one can use our MAPLE script \cite{Program} to compute the Frobenius number of the studied semigroup. The four cases given by the possible parities of $n$ and $k$ are listed as comments in the "LIST OF REDUCTIONS USED" section of the program, marked by the phrase "Extended Triangular".
\end{remark}

Moving on, the proofs of our last two theorems illustrate a common idea: the composition by the root of several edges of the same type is a great tool to study semigroups whose generators are related to the prime factorization of a fixed integer (such as those given by multiplicative functions):

\begin{myproof4}
In this proof we will cheat a little by using a reduction edge that we have not proved in this paper, although we mentioned it in the table from the end of Section $4$. In \cite{Repunit}, the authors find the exact form of an Ap\'ery set $A$ corresponding to a semigroup generated by \emph{repunit} numbers, i.e.:
\begin{equation}
\left\langle \frac{b^n - 1}{b - 1}, \frac{b^{n+1} - 1}{b - 1}, \frac{b^{n+2} - 1}{b - 1}, \ldots \right\rangle = \left\langle \frac{b^n - 1}{b - 1} \right\rangle \oplus A
\end{equation}

For now, all we need to know is that $\max(A) = \frac{b^{2n}-1}{b-1}-1$. Equation $(88)$ can be seen as an Ap\'ery reduction of weight $|A| = \frac{b^n - 1}{b-1}$, as shown in Figure $27$ (left):

\begin{figure}[h]
\centering
\includegraphics[scale = 0.82]{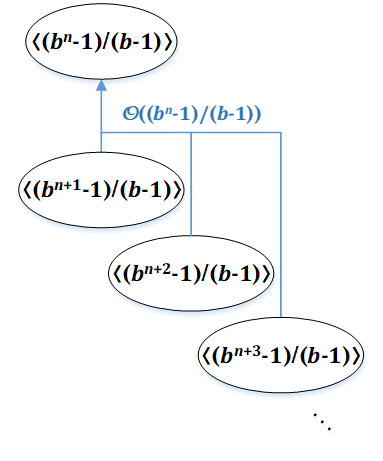}
\hspace{0.5cm}
\includegraphics[scale = 0.82]{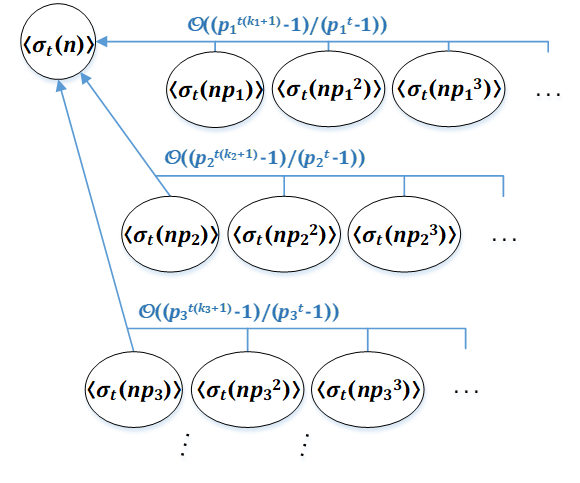}
\caption{Repunit Reduction Edge (left) vs. Reduction Graph for Divisor Functions (right)}
\end{figure}
\FloatBarrier

Now fix a positive integer $n$ and consider a repunit reduction edge $e_i$ for each maximal prime power $p_i^{k_i} > 1$ dividing $n$, such that the output of $e_i$ is $\sigma_t\left(p_i^{k_i}\right) = \frac{p_i^{t(k_i+1)}-1}{p_i^t-1}$, and the inputs of $e_i$ are given by $\sigma_t\left(p_i^k\right) = \frac{p_i^{t(k+1)+1}}{p_i^t-1}$ for $k > k_i$. By considering the composition by the root $e_1 \circ e_2 \circ \ldots \circ e_l$ where $p_1, \ldots, p_l$ are all the prime factors of $n$, one obtains the reduction graph illustrated in Figure $27$ (right), in light of the following formula specific to multiplicative functions:
\begin{equation}
\sigma_t(n) = \prod_{1 < p^k \divides \divides n} \sigma_t(p^k) = \prod_{1 < p^k \divides \divides n} \frac{p^{t(k+1)} - 1}{p^t - 1}
\end{equation}

According to Lemma $5.2$, one has $\Bal(e_1 \circ e_2 \circ \ldots \circ e_l) = \Bal(e_1) \cdot \ldots \cdot \Bal(e_l) = 1$. The resulting reduction graph spans the semigroup $S := \left\langle \left\{ \sigma_t(m) : \frac{m}{n} \text{ is a prime power}\right\} \right\rangle$, where we consider $1$ to be a prime power ($1 = p^0$); note that if $\frac{m}{n}$ is a power of a prime that does not divide $n$, one has $\sigma_t(n) \divides \sigma_t(m)$, so $\sigma_t(m) \in S$ anyway. Therefore, as long as $S$ is numerical (which follows from the coprimality restrictions in the statement of Theorem $1.4$), one can apply Corollary $3.1$ and the results about repunit Ap\'ery sets \cite{Repunit} to conclude that:
\begin{equation}
\begin{split}
F(S) &= -\sigma_t(n) + \sum_{1 < p^k \divides \divides n} \frac{\sigma_t(n)}{\frac{p^{t(k+1)}-1}{p^t-1}} \left( \frac{p^{2t(k+1)}-1}{p^t - 1} - 1 \right) \\
&= \sigma_t(n) \left( -1 + \sum_{1 < p^k \divides \divides n} \frac{p^{2t(k+1)}-p^t}{p^{t(k+1)}-1} \right)
\end{split}
\end{equation}

\end{myproof4}

\begin{remark}
To ensure that $S$ is a numerical semigroup, Theorem $1.4$ requires that for any distinct $p^k, q^l \divides \divides n$, one has $\gcd\left(\sigma_t(p^k), \sigma_t(q^l)\right) = 1$; denote this property of a positive integer $n$ by $P(n)$.

One may wonder if $P(n)$ is true often enough; luckily, a short argument which we will not bother presenting here shows that for any positive integer $n$ such that $P(n)$ is true, one can find a prime power $p^k$ such that $p \not\divides n$ and $P(np^k)$ is true. Since $P(n)$ is vacuously true whenever $n$ is a prime power, this generates a rich infinity of positive integers for which our reasoning applies.
\end{remark}

\begin{myproof5}
Let $n$ be a positive integer with prime factorization $n = p_1^{k_1} p_2^{k_2} \ldots p_l^{k_l}$. Then consider the following reduction graphs of balance $1$:

\begin{figure}[h]
\centering
\includegraphics[scale = 0.82]{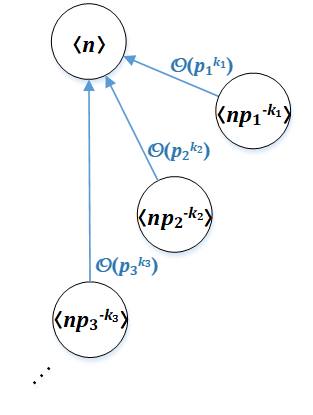}
\hspace{1cm}
\includegraphics[scale = 0.82]{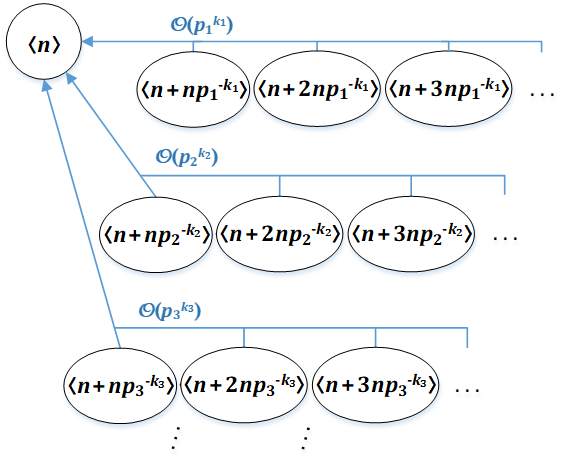}
\caption{Reduction Graphs for Almost Divisible Numbers}
\end{figure}
\FloatBarrier

One can see the graph on the left as the composition by the root of $l$ binary edges of the form $\langle p_i^{k_i} \rangle \gets \langle 1 \rangle$ (which reminds of Example $5.4$), and the graph on the right as the composition by the root of $l$ infinite arithmetic edges of the form $\langle p_i^{k_i} \rangle \gets \langle p_i^{k_i} + 1 \rangle, \langle p_i^{k_i} + 2 \rangle, \langle p_i^{k_i} + 3 \rangle, \ldots$, with $1 \leq i \leq l$.

The first graph describes the semigroup $\left\langle \left\{ \frac{n}{p^k} : p^k \divides\divides n \right\} \right\rangle = \langle \{ m \in \N : m \leq n, m::n\} \rangle =: S_\leq$, which is the same as $\langle\{ m \in \N : m::n \}\rangle $. Recall from Section $1$ that we say $m :: n$ when $m$ is \emph{almost divisible} by $n$, i.e. the denominator of $\frac{m}{n}$ in reduced terms is a prime power. It is easy to see that $S_\leq$ is numerical, since $\gcd\left(np_1^{-k_1}, \ldots, np_l^{-k_l}\right) = 1$. Therefore, Corollary $3.1$ together with relation $(15)$ (after scaling) imply that:
\begin{equation}
\begin{split}
F(S_\leq) &= -n + \sum_{1 < p^k \divides\divides n} \frac{n}{p^k} \left(p^k - 1\right) \\
&= n \left( -1 + \sum_{1 < p^k \divides \divides n} \frac{p^k - 1}{p^k}\right)
\end{split}
\end{equation}

Similarly, the reduction graph on the right of Figure $28$ describes the second semigroup $S_\geq = \langle \{m : m \geq n, m::n \} \rangle$, since $m::n$ if and only if $(m-n)::n$, so any such $m$ can be written as $m = n + u \cdot np^{-k}$ for some $u \geq 0$ and $p^k \divides\divides n$. Again, it is easy to see that $S_\geq$ is numerical, since $\gcd\left(n, n + np_1^{-k_1}, \ldots, n + np_l^{-k_l}\right) = \gcd\left(n, np_1^{-k_1}, \ldots, np_l^{-k_l}\right) = 1$. Hence by Corollary $3.1$ and $(67)$ (adjusted for scaling), we obtain that: 
\begin{equation}
\begin{split}
F(S_\leq) &= -n + \sum_{1 < p^k \divides\divides n} \frac{n}{p^k} \left(p^k + p^k - 1\right) \\
&= n \left( -1 + \sum_{1 < p^k \divides \divides n} \frac{2p^k - 1}{p^k}\right),
\end{split}
\end{equation}
which completes our proof.
\end{myproof5}

\section{Final Remarks}

\begin{enumerate}

\item Perhaps the biggest advantage of the method presented in this paper is that it is constructive: any result of a paper studying the Ap\'ery set of a certain numerical semigroup can be translated into a basic reduction, and then integrated into larger reduction graphs to study even more complicated semigroups (as in the proof of Theorem $1.4$, where we went from repunit numbers to divisor functions).

\item In the previous section, we focused on finding the Frobenius numbers of a few numerical semigroups. With more work (using Theorem $3.1$ and the rest of Corollary $3.1$), one can also compute the Ap\'ery set, Hilbert series and genus in each case (or even the gaps' power sums). In particular, in order to compute the genus, it might be easier to compute the asymmetry first, since most of the edges used are symmetric.

\item If for various reasons the reader wants to study the zeros of the generating function of an Ap\'ery set, this can be done easily with reduction graphs: since the Ap\'ery set is the direct sum of the remainder sets, the zeros of the Ap\'ery set's generating function are the union with multiplicity of the zeros of the remainder sets' generating functions.

\item The general question "\emph{Which numerical semigroups are graph-solvable?}" is subjective, since it depends on what we consider to be an acceptable basic reduction. A better question would be "\emph{Which numerical semigroups are graph-solvable using certain types of edges?}". For instance, if we limit ourselves to linear and (scaled) binary basic reductions, the answer is precisely the free numerical semigroups. More generally, if we only use symmetric edges, we can only describe symmetric numerical semigroups. On the other extreme, any numerical semigroup can be associated with one Ap\'ery reduction edge, a perspective that is useful for proving recursive formulae for general numerical semigroups (recall Corollary $5.1$ about the formula of Brauer and Shockley \cite{Frobenius}).

Another interesting question could be "\emph{Which numerical semigroups have a total reduction graph with more than one nontrivial edge?}", so that we eliminate the case of single Ap\'ery reductions. As suggested in the beginning of Section $4$, if the chosen root generator $r(G)$ can be expressed as a nontrivial product of other integer variables (e.g., $a^n = a \cdot a \cdot \ldots \cdot a$), there is a good chance that the semigroup has a total multi-edge reduction graph. There are some cases, however, when one can build such a reduction graph without decomposing $r(G)$ (recall the Fibonacci triplets), by using only one edge of weight $r(G)$ and linear edges of weight $1$ in the rest. This may require, as in Proposition $4.1$, to use at least one node that is not a monogenic semigroup.

Otherwise, if building a multi-edge reduction graph for a given semigroup seems impossible, the method of linear exchanges that we developed in Section $4$ can serve as complementary to the graphical method, in order to find basic reductions rather than clever networks of edges.

\end{enumerate}

\textbf{Acknowledgements.} The author is deeply grateful to Professor Terence Tao for his helpful insights and suggestions during the development of this article.

\end{document}